\title{{Well-posedness and nonsmooth Lyapunov pairs for  state-dependent maximal monotone differential inclusions}}
\def\beq{\begin{equation}}
\def\eeq{\end{equation}}
\def\baq{\begin{eqnarray}}
\def\eaq{\end{eqnarray}}
\def\baqn{\begin{eqnarray*}}
\def\eaqn{\end{eqnarray*}}
\newcommand{\R}{\mathbb{R}}
\newcommand{\N}{\mathbb{N}}
\newcommand{\ball}{\mathbb{B}}
  \newcommand{\Multi}{\,\,\lower 1pt
     \hbox{$\overrightarrow{\longrightarrow}$}\,\,}
     \newcommand{\multi}{\,\,\lower 1pt
     \hbox{$\overrightarrow{\rightarrow}$}\,\,}
\def\image #1 (#2,#3) (echelle #4) #5{
\dimen2=#2
\dimen3=#3
\divide \dimen2 by 1000
\multiply \dimen2 by #4
\divide \dimen3 by 1000
\multiply \dimen3 by #4
\setbox1 =\vbox to \dimen2{\hsize=\dimen3\vfill\special{picture #1
scaled #4}}
\vbox{\hsize=\dimen3\box1\medskip\centerline{#5}}
}
\newtheorem{definition}{Definition}[section]
\newtheorem{prop}{Proposition}[section]
\newtheorem{thm}{Theorem}[section]
\newtheorem{lemma}{Lemma}[section]
\newtheorem{remark}{Remark}[section]
\newtheorem{example}{Example}[section]
\newtheorem{assumption}{Assumption}
\begin{document}
\author{Ba Khiet \sc Le\\
{\small \it Instituto de Ciencias de la Ingenier\'ia, Universidad de O'Higgins, Rancagua, Chile}\\
{\small  lebakhiet@gmail.com} \\
}
\maketitle
\begin{abstract}
In this paper, we introduce for the first time a class of state-dependent maximal monotone differential inclusions. Then the existence and uniqueness of solutions are obtained by using an implicit discretization scheme and a kind of hypo-monotonicity assumption respectively. In addition,  a characterization for nonsmooth Lyapunov pairs associated with such systems is provided. Our result can be applied to study state-dependent sweeping processes and Lur'e dynamical systems. It is new even the involved maximal monotone operators depend only on the time. 
\end{abstract}

\noindent {\bf Keywords} Differential inclusions, maximal monotone operators , state-dependent, Lur'e systems, sweeping processes \\
\noindent {\bf AMS subject classifications} 49J40, 47J22, 49K40, 37B25\\
\tableofcontents
\section{Introduction}
The paper is dedicated to studying the well-posedness and Lyapunov stability for a new class of state-dependent maximal monotone differential inclusions under perturbations as follows
\begin{equation}\label{main}
\left\{\begin{array}{l}
\dot{x}(t)\in f(t,x(t))-A_{t,x(t)} (x(t)), \;a.e. \; t\ge 0,  \\ \\
x(t_0)=x_0 \in {\rm dom}(A_{t_0,x_0}),
\end{array}\right.
\end{equation}
where $f: [0,+\infty) \times \R^n\to \R^n$ is a continuous function and $A_{t,x}: \R^n \to \R^n$ is a maximal monotone operator for each $(t,x)\in [0,+\infty) \times \R^n,$ i.e., the monotone operator $A$ depends on both time and state.   The classical maximal differential inclusions when $A$ is a fixed maximal monotone operator have been fruitfully studied in the literature, see for examples \cite{Aubin,Brezis}. There are also  various works for the case of time-dependence, i.e., $A_{t,x}\equiv A_t$ (see \cite{Kenmochi,Kunze0,Azzam,Pavel,Vladimirov} and the reference therein). Among important contributions are sweeping processes \cite{aht, Kunze,Le, M1,M2,M3,M4,t1,t2}, 
Skorohod problem \cite{TANAKA}, hysteresis operators \cite{KRASNOSEL?SKIMI} and recently Lur'e dynamical systems \cite{ahl,bg, bg1,bg2, cs,Leand,abc0,abc}. In particular, when $A_{t,x}\equiv N_{C(t)}$, the normal cone of a moving closed convex set, one obtains the sweeping processes
\begin{equation}
\left\{\begin{array}{l}
\dot{x}(t)\in f(t,x(t))-N_{C(t)} (x(t)), \;a.e. \; t\ge 0,  \\ \\
x(t_0)=x_0,
\end{array}\right.
\end{equation}
 which were introduced and thoroughly studied by J. J. Moreau \cite{M1,M2,M3,M4} in the seventies. Sweeping processes can be used for 
 granular material, quasi-static evolution in elastoplasticity, nonsmooth mechanics,
convex optimization, modeling of crowd motion, mathematical economics, switched electrical circuits. Another interesting application is Lur'e dynamical systems, which is of great interest in engineering, control theory and applied mathematics \cite{ahl, bg, bg2, cs, Liberzon,abc}.  The systems comprise an interconnection between a smooth ordinary differential equation  with a possibly set-valued feedback. 
 \begin{figure}[h!]
\begin{center}
\includegraphics[scale=0.8]{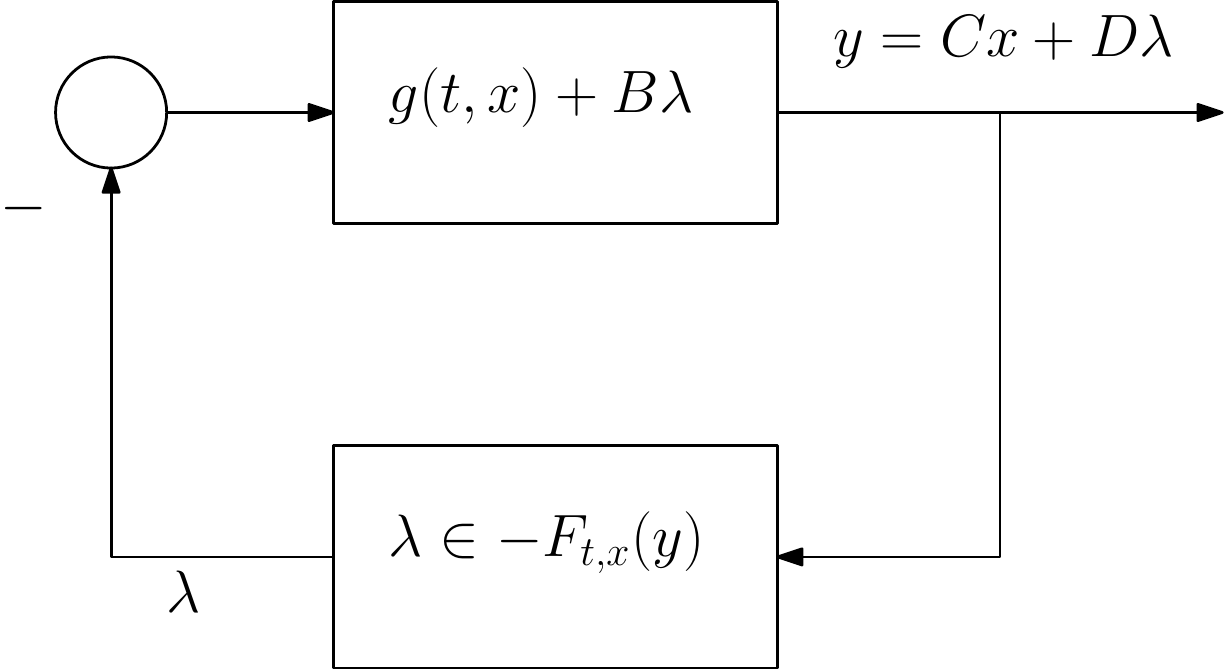}
\caption{ Lur'e  dynamical systems with state-dependent  set-valued feedback.}
\end{center}
\label{luref}
\end{figure}

In detail, let us consider the following systems 
\begin{equation}\label{Lure0}
\left\{\begin{array}{l}
\dot{x}(t) = g(t,x(t))+B\lambda(t)\; {\rm a.e.} \; t \ge 0;  \\ \\
y(t)=Cx(t)+D\lambda(t),\\ \\
\lambda(t) \in -F_{t,x(t)}(y(t)), \;t\ge 0;\\ \\
x(0)=x_0,
\end{array}\right.
\end{equation}
where    
\begin{itemize}
\item $g: [0,T] \times \R^n \to  \R^n $ is a continuous function;
\item  $ B:  \R^m\to  \R^n ,C: \R^n\to  \R^m$, $D : \R^m\to  \R^m$ are some  matrices;
\item  $F: [0,T] \times \R^n  \times\R^m \rightrightarrows \R^m$ is a set-valued mapping such that for each $(t,x) \in [0,T] \times \R^n$, the operator $F_{t,x}(\cdot)$ is maximal monotone. 
\end{itemize}
 After some simple computations, we can reduce $(\ref{Lure0})$ into the following  first order differential inclusions
\begin{equation}\label{Lurere}
\left\{\begin{array}{l}
\dot{x}(t)\in g(t,x(t))-B(F^{-1}_{t,y}+D)^{-1}Cx(t) \;a.e. \; t\ge 0,  \\ \\
x(t_0)=x_0,
\end{array}\right.
\end{equation}
which is a particular form of (\ref{main}). The case of stationary $F$ were considered in \cite{ahl,bg,bg1,bg2,cs} while the time-dependent case was studied  in \cite{abc} for $F_t \equiv N_{C(t)}$. The particular state-dependent case $F_{t,y}\equiv N_{C(t,y)}$ was  considered in \cite{Leand} recently.

To the best of our knowledge, there is still no research for general state-dependent maximal monotone differential inclusions,   which is our motivation to fill this gap.  On the other hand, Lyapunov stability of solutions through Lyapunov functions is always a nice property for any systems that people want to acquire. Smooth Lyapunov functions may limit applications due to the intrinsic nonsmoothness of many problems. Therefore, it is natural for us to consider nonsmooth Lyapunov pairs by using proximal analysis (see, e.g., \cite{aht,al,bg1,bg2,hv} and references therein for smooth as well as nonsmooth Lyapunov functions for the stationary case and time-dependent sweeping processes). Our result is new even the involved maximal monotone operator depends only on the time.

 The paper is organized as follows. In Section \ref{section2}, we recall some useful definitions and results which will be used later. Then the well-posedness of  problem (\ref{main}) is considered in Section \ref{section3} by using an implicit discretization scheme and a kind of hypo-monotonicity assumption. In Section 4, we provide a charactization for lower semi-continous Lyapunov pairs  for (\ref{main}) with some illustrative examples. The obtained results can be applied to study the state-dependent sweeping processes and Lur'e dynamical systems in Section 5.  Finally, some conclusions end the paper in Section 6. 
 \section{Mathematical backgrounds}
\label{section2}
Let us first introduce some notation that will be used in the sequel. Denote by $\langle\cdot,\cdot\rangle$ , $\|\cdot\|, \ball$ the  scalar product, the corresponding norm and the closed unit ball in Euclidean spaces. Let be given a closed, convex set $K\subset \R^n$. The distance and the projection from a point $s$ to $K$ are defined respectively by 
$${ d}(s,K):=\inf_{x\in K} \|s-x\|, \;\;{\rm proj}(s,K):=x \in K \;\;{\rm such \;that \;} { d}(s,K)= \|s-x\|.$$
The minimal norm element of $K$ is  defined by 
$$
K^0:= {\rm proj}(0; K).
$$
The normal cone of $K$ is given by
$$
N(K,x)=\{ x^*: \langle x^*,y-x  \rangle \le 0,\;\; \forall \; y\in K \}.
$$
The Hausdorff distance between two closed, convex sets $K_1, K_2$ is given by 
$$d_H(K_1,K_2):=\max\{\sup_{x_1\in K_1} d(x_1,K_2), \sup_{x_2\in K_2} d(x_2,K_1)\}.$$
Now let $K$ be a closed subset of $\R^n$. The proximal normal cone, the limiting normal cone and the Clarke normal cone of $K$ at $x$ are defined \cite{Clarke,BM} respectively as  
$$
N^P(K,x):=\{\xi \in \R^n: \exists \delta>0\;s.t.\;\langle \xi, y-x \rangle \le \delta \|y-x\|^2\;\;{\rm for\;all\;}y\in K\},
$$ 
$$
N^L(K,x):=\{\xi\in \R^n:\exists\; \xi_n\to \xi\;{\rm\;and}\; \;\xi_n\in N^P(K,x_n), x_n\to x\;{\rm in}\; K\},\\
$$
and
$$N^C(K,x):=\overline{{\rm co}}N^L(K,x).$$
It is easy to see that if $K$ is also convex then $N^P(K,x)= N^L(K,x) = N^C(K,x)=N(K,x).$
\begin{definition}
Let $\varphi: \R^n \to {\R}\cup \{+\infty\}$ be a proper lower semi-continuous function and $x\in \R^n$ at which $\varphi$ is finite. The proximal subdifferential, singular subdifferential and Clarke subdifferential of $\varphi$ at $x$ are defined respectively by 
$$
\partial^P\varphi(x):=\{\xi\in \R^n:  (\xi,-1)\in N^P_{{\rm epi}\;\varphi}(x,\varphi(x))\},
$$
$$
\partial^\infty\varphi(x):=\{\xi\in \R^n:  (\xi,0)\in N^L_{{\rm epi}\;\varphi}(x,\varphi(x))\},
$$
and
$$
\partial^C\varphi(x):=\{\xi\in \R^n:  (\xi,-1)\in N^C_{{\rm epi}\;\varphi}(x,\varphi(x))\}.
$$
\end{definition}
It is known that $\xi\in\partial^\infty\varphi(x)$ if and only if there exist  sequences $(\alpha_k)_{k\in \N}\subset \R_+$, $(x_k)_{k\in\N}$, $(\xi_k)_{k\in \N}$ such that $\alpha_k\to 0^+$, $x_k \rightarrow_\varphi x$, $\xi_k\in \partial^P\varphi(x_k)$ and 
$\alpha_k\xi_k\to \xi$ (see, e.g., \cite{BM}).

\begin{definition}
\noindent  A matrix $P\in \R^{n\times n}$  is called
 \begin{itemize}
\item  $positive$ $semidefinite$ if for all $x\in {\R^n},$ we have 
$$\langle Px,x \rangle \ge 0;$$
\item $positive$ $definite$ if there exists $\alpha>0$ such that for all $x\in {\R^n},$ we have 
$$
\langle Px,x \rangle \ge \alpha\|x\|^2.
$$
\end{itemize}
\end{definition}
We have the following fundamental lemma.
\begin{lemma}\label{estiD}
Let $D$ be a positive semidefinite  matrix. Then there exists some constant $c_1>0$ such that for all $x\in  {\rm rge}(D+D^T)$, we have:
\beq \label{constantd}
\langle Dx, x \rangle \ge c_1 \Vert x \Vert^2.
\eeq
\end{lemma}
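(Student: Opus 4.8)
The plan is to reduce the inequality to a statement about the symmetric part of $D$ and then invoke the spectral theorem. First I would observe that for every $x\in\R^n$,
\beq
\langle Dx,x\rangle=\tfrac12\langle (D+D^T)x,x\rangle,
\eeq
since $\langle D^Tx,x\rangle=\langle x,Dx\rangle=\langle Dx,x\rangle$. Hence, writing $S:=\tfrac12(D+D^T)$, the matrix $S$ is symmetric and positive semidefinite (because $D$ is), and $\langle Dx,x\rangle=\langle Sx,x\rangle$ for all $x$. Moreover ${\rm rge}(D+D^T)={\rm rge}(S)$, so it suffices to produce $c_1>0$ with $\langle Sx,x\rangle\ge c_1\|x\|^2$ for every $x\in{\rm rge}(S)$.

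Next I would use the spectral decomposition of the symmetric positive semidefinite matrix $S$. If $S=0$ there is nothing to prove: then ${\rm rge}(S)=\{0\}$ and any $c_1>0$ works. Otherwise let $0<\lambda_1\le\cdots\le\lambda_k$ be the strictly positive eigenvalues of $S$, with associated orthonormal eigenvectors $v_1,\dots,v_k$, which span ${\rm rge}(S)=(\ker S)^{\perp}$. Set $c_1:=\lambda_1>0$. For $x\in{\rm rge}(S)$ write $x=\sum_{i=1}^{k}a_iv_i$; then $\|x\|^2=\sum_{i=1}^{k}a_i^2$ and
\beq
\langle Sx,x\rangle=\sum_{i=1}^{k}\lambda_i a_i^2\ge\lambda_1\sum_{i=1}^{k}a_i^2=c_1\|x\|^2,
\eeq
which is the desired estimate $(\ref{constantd})$.

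The only mildly delicate points are the passage to the symmetric part $S$ (so that $D$ itself need not be symmetric or have real spectrum) and the identification ${\rm rge}(D+D^T)=(\ker S)^{\perp}$, which is exactly what forces $S$ to act as a positive definite operator on that subspace. Once these are in place the conclusion is immediate from the spectral theorem, so I do not anticipate a genuine obstacle; the argument is essentially a linear-algebra fact dressed up for later use (e.g. to control the $D$-block in the Lur'e reduction $(\ref{Lurere})$).
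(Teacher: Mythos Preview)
Your proof is correct and follows essentially the same route as the paper: reduce to the symmetric part and invoke the spectral theorem on the subspace ${\rm rge}(D+D^T)=(\ker S)^\perp$. The paper's proof is a one-line remark naming $c_1$ as the smallest positive eigenvalue of $D+D^T$ (which is in fact off by a factor $\tfrac12$; your choice $c_1=\lambda_1\big(\tfrac12(D+D^T)\big)$ is the sharp one).
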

\begin{proof}
If $D$ is non-zero then $c_1$ can be chosen as the smallest positive eigenvalue of $D+D^T$.
\end{proof}
\begin{definition}
A set-valued mapping $F: \R^n \rightrightarrows \R^n$ is called $\it{monotone}$ if for all $x,y\in \R^n,x^*\in F(x),y^*\in F(y)$, one has $\langle x^*-y^*,x-y\rangle \ge 0.$ In addition, it  is called $\it{maximal \;monotone}$ if there is no monotone  operator $G$ such that the graph of $F$ is contained strictly in  the graph of $G,$ i.e., if 
$$
\langle x^*-y^*,x-y\rangle \ge 0,
$$
for all $(y,y^*)$ in the graph of $F$, then $x^*\in F(x)$.
\end{definition}
 \begin{prop}{\rm (\cite{Aubin,Brezis})}\label{Yosida}
Let $H$ be a Hilbert   space, $F:  {H} \rightrightarrows  {H}$ be a maximal monotone operator and let $\lambda>0$. Then\\
$1)$ the \emph{resolvent} of $F$  defined by $J_F^\lambda:=(I+\lambda F)^{-1} $ is a non-expansive and single-valued map from $ {H}$ to $ {H}$.\\
$2)$ the \emph{Yosida approximation} of $F$ defined by $F^\lambda:=\frac{1}{\lambda}(I-J_F^\lambda)=(\lambda I+F^{-1})^{-1}$ satisfies\\
\indent i) for all $x\in {H}$, $F^\lambda(x)\in F(J_F^\lambda x)$ ,\\
\indent ii) $F_\lambda$ is Lipschitz continuous with constant $\frac{1}{\lambda}$ and also maximal monotone.\\
\indent iii) If $x\in {\rm dom}(F)$, then $\|F^\lambda x\| \le \|F^0x\|$, where $F^0x$ is the element of $Fx$ of minimal norm.\\
\end{prop}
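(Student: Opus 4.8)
The plan is to dispose of the two items in order, reducing every assertion except one to the monotonicity of $F$ combined with the Cauchy--Schwarz inequality, and to isolate the single genuinely non-elementary point, namely that the resolvent is defined on all of $H$.

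First I would prove $1)$. Single-valuedness and non-expansiveness of $J_F^\lambda$ follow at once from monotonicity: if $y_i\in J_F^\lambda x_i$ then $x_i - y_i\in\lambda F(y_i)$, i.e. $\lambda^{-1}(x_i-y_i)\in F(y_i)$, and writing the monotonicity inequality for the two pairs $(y_1,\lambda^{-1}(x_1-y_1))$, $(y_2,\lambda^{-1}(x_2-y_2))$ and multiplying by $\lambda>0$ gives $\langle x_1-x_2, y_1-y_2\rangle \ge \|y_1-y_2\|^2$. Taking $x_1=x_2$ forces $y_1=y_2$, so $J_F^\lambda$ is single-valued, and in general Cauchy--Schwarz yields $\|y_1-y_2\|\le\|x_1-x_2\|$. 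The substantive part of $1)$ is that ${\rm dom}(J_F^\lambda)=H$, i.e. that $I+\lambda F$ is surjective; this is Minty's theorem, which I would quote from \cite{Brezis} (alternatively it is proved by the classical continuation argument: the set of $\lambda>0$ for which $R(I+\lambda F)=H$ is nonempty, open and closed, using the already-established non-expansiveness of the resolvents together with a Banach fixed point argument). I expect this to be the only real obstacle; in the write-up I would simply cite it.

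Next I would prove $2)$. Item i) is just a rereading of the definition: $x\in J_F^\lambda x+\lambda F(J_F^\lambda x)$ gives $F^\lambda x=\lambda^{-1}(x-J_F^\lambda x)\in F(J_F^\lambda x)$, and the same computation run backwards shows $y=F^\lambda x \iff x\in\lambda y+F^{-1}(y)$, which is the identity $F^\lambda=(\lambda I+F^{-1})^{-1}$. For ii), using $x_i-J_F^\lambda x_i=\lambda F^\lambda x_i$ and the monotonicity of $F$ applied at the pairs $(J_F^\lambda x_i,F^\lambda x_i)$ I would compute
\[
\langle F^\lambda x_1-F^\lambda x_2,\, x_1-x_2\rangle=\langle F^\lambda x_1-F^\lambda x_2,\, J_F^\lambda x_1-J_F^\lambda x_2\rangle+\lambda\|F^\lambda x_1-F^\lambda x_2\|^2\ge\lambda\|F^\lambda x_1-F^\lambda x_2\|^2,
\]
which gives monotonicity of $F^\lambda$ and, via Cauchy--Schwarz on the left-hand side, the Lipschitz bound $\|F^\lambda x_1-F^\lambda x_2\|\le\lambda^{-1}\|x_1-x_2\|$; maximality is then automatic, since a monotone, everywhere-defined, continuous operator on $H$ is maximal monotone. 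Finally, for iii), if $x\in{\rm dom}(F)$ and $v\in F(x)$, monotonicity of $F$ at $(J_F^\lambda x,F^\lambda x)$ and $(x,v)$ together with $J_F^\lambda x-x=-\lambda F^\lambda x$ gives $\|F^\lambda x\|^2\le\langle v,F^\lambda x\rangle\le\|v\|\,\|F^\lambda x\|$, whence $\|F^\lambda x\|\le\|v\|$; taking $v=F^0x$ finishes the proof.
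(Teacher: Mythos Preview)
Your proof is correct and follows the standard route found in the cited references. Note, however, that the paper does not actually prove this proposition: it is stated as a known background result with a citation to \cite{Aubin,Brezis} and no proof is given. So there is nothing in the paper to compare your argument against; you have supplied what the paper deliberately omits.
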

\noindent  Let us recall Minty's Theorem in the setting of Hilbert  spaces  (see \cite{Aubin,Brezis}).
\begin{prop}\label{minty}
Let $H$ be a Hilbert   space.  Let $F: H\rightrightarrows H$ be a monotone operator. Then $F$ is maximal monotone if and only if ${\rm rge}(F+I)=H.$
\end{prop}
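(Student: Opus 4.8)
The plan is to treat the two implications separately; the reverse one (``${\rm rge}(F+I)=H$ $\Rightarrow$ $F$ maximal monotone'') is short and uses only monotonicity, while the forward one (``$F$ maximal monotone $\Rightarrow$ $F+I$ surjective'') carries all the weight. For the easy direction: suppose ${\rm rge}(F+I)=H$ and let $(x,x^{*})$ be monotonically related to the graph of $F$, i.e. $\langle x^{*}-y^{*},x-y\rangle\ge 0$ whenever $y^{*}\in F(y)$. Using surjectivity pick $(y,y^{*})$ with $y^{*}\in F(y)$ and $y+y^{*}=x+x^{*}$, so $x^{*}-y^{*}=y-x$; plugging this into the monotonicity inequality yields $-\|x-y\|^{2}\ge 0$, hence $x=y$ and $x^{*}=y^{*}\in F(x)$. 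Thus $F$ is maximal monotone.

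For the forward direction, set $D:={\rm rge}(F+I)$; the goal is $D=H$. First I would record that $(F+I)^{-1}$ is single-valued and nonexpansive on $D$: if $z_{i}=x_{i}+x_{i}^{*}$ with $x_{i}^{*}\in F(x_{i})$, then $\langle z_{1}-z_{2},x_{1}-x_{2}\rangle=\|x_{1}-x_{2}\|^{2}+\langle x_{1}^{*}-x_{2}^{*},x_{1}-x_{2}\rangle\ge\|x_{1}-x_{2}\|^{2}$, and Cauchy--Schwarz gives $\|x_{1}-x_{2}\|\le\|z_{1}-z_{2}\|$. Next, maximal monotonicity forces the graph of $F$ to be closed (pass to the limit in the monotonicity inequality tested against a fixed element of the graph); together with the previous point this shows $D$ is closed, since $z_{n}\to z$ in $D$ makes $x_{n}:=(F+I)^{-1}z_{n}$ Cauchy, $x_{n}\to x$, $x_{n}^{*}:=z_{n}-x_{n}\to z-x\in F(x)$, whence $z\in D$.

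The crucial step, and the one I expect to be the real obstacle, is to upgrade this to ``$D$ is convex'' (equivalently: dense, or open in $H$). One route is the Lipschitz case: for a monotone, everywhere-defined, Lipschitz map $A$, the map $x\mapsto (1-t)x-tA(x)+tf$ is a contraction for all small $t>0$ because the cross term is killed by monotonicity, so $A+I$ is onto by Banach's fixed point theorem; approximating a general maximal monotone $F$ by such Lipschitz monotone operators then delivers the density of $D$, hence $D=\overline D=H$. Alternatively, one may use that $F+I$ is again maximal monotone and that the closure of the range of a maximal monotone operator is convex. In any case this is the classical core of Minty's theorem, and, if needed, I would cite it directly from \cite{Aubin,Brezis}.

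Once $D$ is known to be closed and convex (and nonempty, as $\dom(F)\neq\emptyset$), the conclusion $D=H$ follows by separation. Assume $D\neq H$, pick $z_{1}\notin D$, put $\bar z:={\rm proj}(z_{1},D)$ and $v:=z_{1}-\bar z\neq 0$, so that $\langle v,z-\bar z\rangle\le 0$ for all $z\in D$; write $\bar z=\bar x+\bar x^{*}$ with $\bar x^{*}\in F(\bar x)$. For any $(y,y^{*})$ with $y^{*}\in F(y)$ one has $y+y^{*}\in D$, hence $\langle v,\bar z-(y+y^{*})\rangle\ge 0$; adding this to the monotonicity inequality $\langle\bar x^{*}-y^{*},\bar x-y\rangle\ge 0$ gives $\langle(\bar x^{*}+v)-y^{*},(\bar x+v)-y\rangle\ge\|v\|^{2}>0$. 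So $(\bar x+v,\bar x^{*}+v)$ is monotonically related to the graph of $F$, and maximality forces $\bar x^{*}+v\in F(\bar x+v)$, whence $\bar z+2v=(\bar x+v)+(\bar x^{*}+v)\in D$ — contradicting $\langle v,(\bar z+2v)-\bar z\rangle=2\|v\|^{2}>0$. Therefore $D=H$, which is the claim.
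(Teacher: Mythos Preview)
The paper does not prove this proposition: it is stated as a classical result and attributed to \cite{Aubin,Brezis} without argument. So there is no ``paper's own proof'' to compare against; your proposal should be read as an attempt to supply a self-contained proof of Minty's theorem.

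Your reverse implication is clean and correct. For the forward implication, the architecture you choose --- show $D={\rm rge}(F+I)$ is closed, then convex, then use projection/separation to force $D=H$ --- is sound, and your closing separation argument is nicely computed. The issue is that the entire difficulty of Minty's theorem sits in the step you label ``crucial'' and then only sketch. Two comments on that step. First, your Lipschitz route is fine as far as it goes (the contraction estimate is correct), but ``approximating a general maximal monotone $F$ by such Lipschitz monotone operators then delivers the density of $D$'' hides the real work: one must produce the approximants (Yosida), solve $x_\lambda+F_\lambda x_\lambda=f$, obtain $\lambda$-independent bounds, and pass to the limit using maximality --- none of which you carry out. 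Second, your alternative (``the closure of the range of a maximal monotone operator is convex'') is circular in most presentations: that fact is usually proved \emph{via} Minty's theorem, so invoking it here begs the question. Finally, your separation argument genuinely needs convexity of $D$ (in an infinite-dimensional Hilbert space a nearest point in a merely closed set need not exist, nor satisfy the variational inequality you use), so you cannot skip that step.

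In short: the outline is correct and the bookend arguments are complete, but the proof as written has a gap exactly where the theorem is hard. If you intend this as more than a pointer to \cite{Aubin,Brezis}, you should flesh out the Yosida-approximation step.
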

Let be given two maximal monotone operators $F_1$ and
$F_2$, we recall  the definition of  pseudo-distance between  $F_1$ and $F_2$ introduced by Vladimirov \cite{Vladimirov} as follows
$$
{\rm dis}( F_1, F_2):=\sup\Big\{ \frac{\langle \eta_1 -\eta_2,z_2-z_1\rangle}{1+|\eta_1|+|\eta_2|}:  \eta_i\in F(z_i), z_i\in {\rm dom}\; (F_i), i=1,2\Big\}.
$$
\begin{lemma}\label{hauslem}
\cite{Vladimirov} If $ F_i=N_{A_i}$  where $A_i$ is a  closed convex set  $(i=1,2)$ then 
$$
{\rm dis}( F_1,  F_2)=d_H(A_1,A_2).
$$
\end{lemma}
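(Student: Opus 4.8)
The plan is to establish the two inequalities ${\rm dis}(F_1,F_2)\le d_H(A_1,A_2)$ and ${\rm dis}(F_1,F_2)\ge d_H(A_1,A_2)$ separately, using only the variational definition of the normal cone to a closed convex set together with the obtuse-angle (projection) property $s-{\rm proj}(s,A)\in N(A,{\rm proj}(s,A))$ valid for any closed convex $A$.

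For the upper bound I would fix admissible data $z_i\in A_i$, $\eta_i\in N(A_i,z_i)$, $i=1,2$, and split the numerator as $\langle\eta_1-\eta_2,z_2-z_1\rangle=\langle\eta_1,z_2-z_1\rangle+\langle\eta_2,z_1-z_2\rangle$. Writing $p_1:={\rm proj}(z_2,A_1)\in A_1$, the definition of $N(A_1,z_1)$ gives $\langle\eta_1,p_1-z_1\rangle\le 0$, hence $\langle\eta_1,z_2-z_1\rangle\le\langle\eta_1,z_2-p_1\rangle\le\|\eta_1\|\,d(z_2,A_1)$, and symmetrically $\langle\eta_2,z_1-z_2\rangle\le\|\eta_2\|\,d(z_1,A_2)$. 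Since $d(z_2,A_1)\le d_H(A_1,A_2)$ and $d(z_1,A_2)\le d_H(A_1,A_2)$, the numerator is at most $(\|\eta_1\|+\|\eta_2\|)\,d_H(A_1,A_2)\le(1+\|\eta_1\|+\|\eta_2\|)\,d_H(A_1,A_2)$; dividing by $1+\|\eta_1\|+\|\eta_2\|$ and taking the supremum over all admissible data yields ${\rm dis}(F_1,F_2)\le d_H(A_1,A_2)$.

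For the lower bound, note first that ${\rm dis}(\cdot,\cdot)$ is symmetric in its arguments, so we may assume $d_H(A_1,A_2)=\sup_{x\in A_2}d(x,A_1)$. Suppose for now this quantity is finite. Given $\varepsilon>0$, pick $z_2\in A_2$ with $t:=d(z_2,A_1)>d_H(A_1,A_2)-\varepsilon$, and set $z_1:={\rm proj}(z_2,A_1)$, so $\|z_2-z_1\|=t$. The projection property gives $z_2-z_1\in N(A_1,z_1)$, and since this normal cone is a cone we may take $\eta_1:=\lambda(z_2-z_1)\in N(A_1,z_1)$ for any $\lambda>0$, together with $\eta_2:=0\in N(A_2,z_2)$. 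For this choice the test ratio equals $\lambda t^2/(1+\lambda t)$, whose supremum over $\lambda>0$ is $t$; hence ${\rm dis}(F_1,F_2)\ge t>d_H(A_1,A_2)-\varepsilon$, and letting $\varepsilon\to 0$ finishes this case. If $d_H(A_1,A_2)=+\infty$, the same construction applied along a sequence $z_2^{(k)}\in A_2$ with $d(z_2^{(k)},A_1)\to+\infty$ forces ${\rm dis}(F_1,F_2)=+\infty$, so the identity still holds in $[0,+\infty]$.

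The only genuinely delicate point is the lower bound: the most natural guess $\eta_1=z_2-z_1$, $\eta_2=0$ only delivers the ratio $t^2/(1+t)$, which is strictly less than the target value $t$. One must defeat the normalization $1+\|\eta_1\|+\|\eta_2\|$ by exploiting the positive homogeneity of the normal cone and letting $\|\eta_1\|\to+\infty$; once that observation is in place, everything reduces to the elementary computation $\sup_{\lambda>0}\lambda t^2/(1+\lambda t)=t$. All other steps are routine.
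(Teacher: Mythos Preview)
Your proof is correct and complete. The paper itself does not supply a proof of this lemma: it is quoted directly from Vladimirov~\cite{Vladimirov} and left without argument, so there is no ``paper's own proof'' to compare against. What you have written constitutes a self-contained verification that the paper omits.

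Both inequalities are handled cleanly. The upper bound via the decomposition $\langle\eta_1,z_2-z_1\rangle\le\langle\eta_1,z_2-p_1\rangle\le\|\eta_1\|\,d(z_2,A_1)$ is the natural move, and your treatment of the lower bound is exactly right: the key observation is that the naive choice $\eta_1=z_2-z_1$ is defeated by the additive $1$ in the denominator, and one must exploit the conic structure of $N(A_1,z_1)$ by scaling $\eta_1=\lambda(z_2-z_1)$ and sending $\lambda\to\infty$ to recover the full value $t$. Your handling of the degenerate case $d_H(A_1,A_2)=+\infty$ is also appropriate. Nothing is missing.
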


\begin{lemma}\label{esti2mm} \cite{Kunze}
Let $F_1$, $F_2$ be two maximal monotone operators. For $\lambda>0, \delta >0$ and $x\in {\rm dom}(F_1)$, we have
\baqn
\Vert x - J^\lambda_{F_2}(x)\Vert &\le& \lambda \Vert F_1^0 x \Vert + {\rm dis}(F_1, F_2)+\sqrt{\lambda(1+\Vert F_1^0 x \Vert){\rm dis}(F_1, F_2)}\\
&\le& \lambda\Vert F_1^0 x \Vert + {\rm dis}(F_1, F_2)+ (\delta {\rm dis}(F_1, F_2)+\frac{\lambda(1+\Vert F_1^0 x \Vert)}{4\delta})\\
&\le& \frac{\lambda(1+(4\delta+1)\Vert F_1^0 x \Vert)}{4\delta})+(1+\delta){\rm dis}(F_1, F_2).
\eaqn
\end{lemma}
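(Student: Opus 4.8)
The plan is to unwind the definition of the resolvent and insert the resulting pair of points into the supremum defining the pseudo-distance ${\rm dis}(F_1,F_2)$. Set $y:=J^\lambda_{F_2}(x)$. Since $y=(I+\lambda F_2)^{-1}x$ is equivalent to $\tfrac{1}{\lambda}(x-y)\in F_2(y)$ (cf. Proposition \ref{Yosida}), and since $F_1^0x\in F_1(x)$ because $x\in{\rm dom}(F_1)$, we may take $z_1=x$, $\eta_1=F_1^0x$, $z_2=y$, $\eta_2=\tfrac{1}{\lambda}(x-y)$ in the definition of ${\rm dis}$ to get
$$\Big\langle F_1^0x-\tfrac{1}{\lambda}(x-y),\,y-x\Big\rangle\le {\rm dis}(F_1,F_2)\Big(1+\|F_1^0x\|+\tfrac{1}{\lambda}\|x-y\|\Big).$$
Expanding the left-hand side produces $\langle F_1^0x,y-x\rangle-\tfrac1\lambda\|x-y\|^2$; moving the negative quadratic term to the other side and bounding $\langle F_1^0x,y-x\rangle\le\|F_1^0x\|\,\|x-y\|$ by Cauchy--Schwarz yields
$$\frac{1}{\lambda}\|x-y\|^2\le \|F_1^0x\|\,\|x-y\|+{\rm dis}(F_1,F_2)\big(1+\|F_1^0x\|\big)+\frac{{\rm dis}(F_1,F_2)}{\lambda}\|x-y\|.$$

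Next, abbreviate $a:=\|x-y\|$, $M:=\|F_1^0x\|$, $D:={\rm dis}(F_1,F_2)$ and multiply the last inequality by $\lambda$ to obtain the scalar quadratic bound $a^2-(\lambda M+D)\,a-\lambda D(1+M)\le 0$. Because $a\ge 0$, this forces $a$ to lie below the larger root, i.e. $a\le\frac12\big[(\lambda M+D)+\sqrt{(\lambda M+D)^2+4\lambda D(1+M)}\big]$; applying $\sqrt{u+v}\le\sqrt u+\sqrt v$ with $u=(\lambda M+D)^2$, $v=4\lambda D(1+M)$ collapses this to $a\le(\lambda M+D)+\sqrt{\lambda D(1+M)}$, which is exactly the first asserted inequality. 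For the second inequality I would use the weighted Young bound $\sqrt{uv}\le\frac12\big(2\delta\,u+\frac{v}{2\delta}\big)$ with $u=D$, $v=\lambda(1+M)$, giving $\sqrt{\lambda D(1+M)}\le\delta D+\frac{\lambda(1+M)}{4\delta}$; and the third inequality is then only bookkeeping, regrouping $\lambda M+D+\delta D+\frac{\lambda(1+M)}{4\delta}=(1+\delta)D+\frac{\lambda\big(1+(4\delta+1)M\big)}{4\delta}$.

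None of these steps is a genuine obstacle: the computation is essentially forced once the right four points are chosen. The only point that requires a little care is the very first one --- picking $z_1,\eta_1,z_2,\eta_2$ correctly so that the term $\tfrac1\lambda(x-y)$ appears both inside the monotonicity pairing and inside the normalizing denominator of ${\rm dis}$, and then tracking the sign of the cross term $-\tfrac1\lambda\|x-y\|^2$ when it is transferred to the right-hand side. After that the proof is just solving a scalar quadratic followed by two applications of Young's inequality.
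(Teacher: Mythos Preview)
The paper does not prove this lemma; it is quoted from \cite{Kunze} without proof, so there is no in-paper argument to compare against. Your approach is the standard one and is correct, with one sign slip in the exposition: expanding $\big\langle F_1^0x-\tfrac1\lambda(x-y),\,y-x\big\rangle$ gives $\langle F_1^0x,y-x\rangle+\tfrac1\lambda\|x-y\|^2$, not minus (since $\langle x-y,y-x\rangle=-\|x-y\|^2$). Consequently it is the \emph{positive} quadratic term that one isolates on the left, and the Cauchy--Schwarz bound actually used is $-\langle F_1^0x,y-x\rangle=\langle F_1^0x,x-y\rangle\le\|F_1^0x\|\,\|x-y\|$. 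Your displayed inequality immediately after is nonetheless the correct one, so the remainder of the argument (solving the scalar quadratic and the two Young-type estimates) goes through exactly as written.
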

\begin{lemma}\label{closemm} \cite{Kunze}
Let $F_n$ be a sequence of maximal monotone operators in a Hilbert space $H$ such that ${\rm dis}(F_n,F)\to 0$ as $n\to+\infty$ for some maximal monotone operator $F$. Suppose that $x_n\in {\rm dom}(F_n)$ with $x_n\to x$ and that $y_n\in F_n(x_n)$ with $y_n\to y$ weakly for some $x, y\in H$. Then $x\in {\rm dom}(F)$ and $y\in F(x)$.
\end{lemma}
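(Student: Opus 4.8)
The plan is to use the maximal monotonicity of the limit operator $F$. By the definition of maximality, it suffices to prove that
$$
\langle y-z^*,\,x-z\rangle\ \ge\ 0\qquad\text{for every }(z,z^*)\text{ with }z\in{\rm dom}(F),\ z^*\in F(z),
$$
since this yields simultaneously $y\in F(x)$ and, a fortiori, $x\in{\rm dom}(F)$.

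So I would fix an arbitrary pair $(z,z^*)$ with $z^*\in F(z)$. Because $x_n\in{\rm dom}(F_n)$ and $y_n\in F_n(x_n)$, the definition of the Vladimirov pseudo-distance ${\rm dis}(F_n,F)$, applied with $\eta_1=y_n$, $z_1=x_n$ in the slot of $F_1=F_n$ and $\eta_2=z^*$, $z_2=z$ in the slot of $F_2=F$, gives
$$
\frac{\langle y_n-z^*,\,z-x_n\rangle}{1+\|y_n\|+\|z^*\|}\ \le\ {\rm dis}(F_n,F),
$$
that is, $\langle y_n-z^*,\,z-x_n\rangle\le \bigl(1+\|y_n\|+\|z^*\|\bigr)\,{\rm dis}(F_n,F)$.

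Then I would pass to the limit $n\to\infty$. On the right-hand side, $(y_n)$ is bounded (being weakly convergent), so the factor $1+\|y_n\|+\|z^*\|$ stays bounded uniformly in $n$, while ${\rm dis}(F_n,F)\to 0$ by hypothesis; hence the right-hand side tends to $0$. On the left-hand side, $x_n\to x$ strongly and $y_n\to y$ weakly, so $\langle y_n,z-x_n\rangle\to\langle y,z-x\rangle$ and $\langle z^*,z-x_n\rangle\to\langle z^*,z-x\rangle$. Therefore $\langle y-z^*,\,z-x\rangle\le 0$, i.e. $\langle y-z^*,\,x-z\rangle\ge 0$. Since $(z,z^*)$ was an arbitrary element of the graph of $F$, maximality of $F$ forces $y\in F(x)$, and in particular $x\in{\rm dom}(F)$.

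The argument is short, and the only points that need care are: correctly matching $F_n$ and $F$ to the two slots in the numerator of ${\rm dis}(\cdot,\cdot)$; and the convergence of the bilinear term $\langle y_n,z-x_n\rangle$, which is legitimate precisely because one factor converges strongly while the other converges weakly and is bounded. No compactness or additional regularity of the operators is needed beyond the stated hypotheses.
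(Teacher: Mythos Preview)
Your argument is correct. The paper does not supply its own proof of this lemma; it simply cites the result from \cite{Kunze}. So there is no ``paper's proof'' to compare against, only the question of whether your proof stands on its own --- and it does.

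The one point worth reinforcing is the limit of the bilinear term. You write that $\langle y_n, z-x_n\rangle \to \langle y, z-x\rangle$ because ``one factor converges strongly while the other converges weakly and is bounded.'' That is the right idea, but make the splitting explicit so a reader cannot object: write
\[
\langle y_n, z-x_n\rangle \;=\; \langle y_n, z-x\rangle \;+\; \langle y_n, x-x_n\rangle,
\]
where the first summand tends to $\langle y, z-x\rangle$ by weak convergence of $y_n$, and the second is bounded by $\|y_n\|\,\|x-x_n\|\to 0$ because $(y_n)$ is bounded and $x_n\to x$ strongly. With that detail spelled out, the proof is complete and self-contained.
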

Let us end-up this section by recalling some versions of Gronwall's inequality.

\begin{lemma}\label{Gronwalldis}
Let $\alpha>0$ and $(u_n)$, $(\beta_n)$ be non-negative sequences satisfying 
\beq
u_n\le \alpha+\sum_{k=0}^{n-1}\beta_k u_k \;\;\forall n= 0,1,2,\ldots\;\; ({\rm with} \;\beta_{-1}:=0).
\eeq
Then, for all $n$, we have
$$u_n\le \alpha \;{\rm exp}\Big(\sum_{k=0}^{n-1}\beta_k\Big).$$
\end{lemma}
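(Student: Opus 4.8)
The plan is to reduce the implicit inequality to an explicit one-step recursion on the partial sums. First I would set $S_n := \alpha + \sum_{k=0}^{n-1}\beta_k u_k$ (with the empty-sum convention, so $S_0 = \alpha$), so that the hypothesis reads exactly $u_n \le S_n$ for every $n \ge 0$. Since all $\beta_k$ and $u_k$ are non-negative, the sequence $(S_n)$ is non-decreasing, and directly from its definition $S_{n+1} - S_n = \beta_n u_n \le \beta_n S_n$, i.e. $S_{n+1} \le (1+\beta_n)S_n$ for all $n \ge 0$.

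Next I would iterate this estimate: a straightforward induction on $n$ gives $S_n \le S_0 \prod_{k=0}^{n-1}(1+\beta_k) = \alpha \prod_{k=0}^{n-1}(1+\beta_k)$. Applying the elementary bound $1 + t \le e^{t}$ (valid for $t \ge 0$) to each factor $t = \beta_k$, the product is dominated by $\exp\big(\sum_{k=0}^{n-1}\beta_k\big)$, whence $S_n \le \alpha\,\exp\big(\sum_{k=0}^{n-1}\beta_k\big)$. Combining this with $u_n \le S_n$ yields the asserted inequality.

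There is essentially no serious obstacle; the only points requiring care are the index bookkeeping — in particular the empty-sum convention at $n=0$, which makes the base case $u_0 \le \alpha$ and $S_0=\alpha$ hold, together with the convention $\beta_{-1}:=0$ already fixed in the statement — and the observation that the non-negativity of the two sequences is precisely what makes $(S_n)$ monotone and allows one to replace $u_n$ by $S_n$ in the recursion. An alternative route would be to establish $u_n \le \alpha\,\exp\big(\sum_{k=0}^{n-1}\beta_k\big)$ directly by induction on $n$, but passing through the partial sums $S_n$ avoids re-expanding the sum at each step and is cleaner.
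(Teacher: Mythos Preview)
Your argument is correct and is the standard proof of the discrete Gronwall inequality: introducing the partial sums $S_n$, deriving the one-step recursion $S_{n+1}\le(1+\beta_n)S_n$, iterating, and bounding the product via $1+t\le e^t$ is exactly the canonical route, and your handling of the base case and the empty-sum convention is accurate. Note, however, that the paper does not actually supply a proof of this lemma --- it is merely \emph{recalled} in the preliminaries as a known result --- so there is no paper proof to compare against; your write-up would serve perfectly well as the omitted justification.
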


\begin{lemma}\label{gronwall}
Let $T>0$ be given and $a(\cdot),b(\cdot)\in L^1([0,T];\R)$ with $b(t)\ge 0$ for almost all $t\in [0,T].$ Let an absolutely continuous function $w: [0,T]\to \R_+$ satisfy
\beq
(1-\alpha)w'(t)\le a(t)w(t)+b(t)w^\alpha(t),\;\; a. e. \;t\in [0,T]
\eeq
where $0\le \alpha<1$. Then for all $t\in [0,T]$, we have 
\beq
w^{1-\alpha}(t)\le w^{1-\alpha}(0){\rm exp}\Big(\int_{0}^t a(\tau)d\tau\Big)+\int_{0}^t{\rm exp}\Big(\int_{s}^t a(\tau)d\tau\Big)b(s)ds.
\eeq
\end{lemma}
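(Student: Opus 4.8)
The plan is to reduce the nonlinear differential inequality to a linear one via the classical substitution $v=w^{1-\alpha}$, and then to invoke the integrating–factor form of Gronwall's lemma. The only genuine subtlety is that $w^{1-\alpha}$ need not be absolutely continuous at zeros of $w$ (since $s\mapsto s^{1-\alpha}$ is merely H\"older near the origin when $\alpha>0$), so one cannot simply integrate its pointwise a.e.\ derivative; I would get around this by a vanishing regularization $w\mapsto w+\vp$.

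First, fix $\vp>0$ and set $w_\vp:=w+\vp$. Then $w_\vp$ is absolutely continuous, $w_\vp'=w'$ a.e., and $w_\vp\ge\vp>0$ on $[0,T]$. Writing $w=w_\vp-\vp$ and using $b\ge 0$, $0\le w\le w_\vp$, $\alpha\ge 0$ in the hypothesis, one gets a.e.\ on $[0,T]$
\beq
(1-\alpha)w_\vp'(t)\le a(t)w_\vp(t)+b(t)w_\vp^{\alpha}(t)+|a(t)|\vp .
\eeq
Since $s\mapsto s^{1-\alpha}$ is Lipschitz on $[\vp,+\infty)$, the function $v_\vp:=w_\vp^{1-\alpha}$ is absolutely continuous on $[0,T]$ with $v_\vp'=(1-\alpha)w_\vp^{-\alpha}w_\vp'$ a.e. Multiplying the last inequality by $w_\vp^{-\alpha}(t)>0$ and using $w_\vp^{-\alpha}\le\vp^{-\alpha}$ yields the \emph{linear} differential inequality
\beq
v_\vp'(t)\le a(t)v_\vp(t)+b_\vp(t),\qquad b_\vp(t):=b(t)+|a(t)|\vp^{1-\alpha},\quad\text{a.e. on }[0,T].
\eeq

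Next, multiply by the integrating factor $\exp\big(-\int_0^t a\big)$ and integrate from $0$ to $t$ to obtain
\beq
v_\vp(t)\le v_\vp(0)\,{\rm exp}\Big(\int_0^t a(\tau)\,d\tau\Big)+\int_0^t{\rm exp}\Big(\int_s^t a(\tau)\,d\tau\Big)b_\vp(s)\,ds .
\eeq
Finally let $\vp\to 0^+$: $v_\vp(t)\to w^{1-\alpha}(t)$ and $v_\vp(0)\to w^{1-\alpha}(0)$, while $b_\vp\to b$ in $L^1(0,T)$ (because $a\in L^1$) and the weights $\exp\big(\int_s^t a\big)$ are bounded by $\exp(\|a\|_{L^1})$ uniformly in $0\le s\le t\le T$; passing to the limit gives the stated estimate.

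The one point to handle with care is precisely this loss of absolute continuity of $w^{1-\alpha}$ at zeros of $w$: the shift restores it at the cost of the harmless extra term $|a(t)|\vp^{1-\alpha}$, which vanishes in the limit. Everything else is the textbook linear Gronwall computation, so there is no substantial obstacle beyond this regularization bookkeeping.
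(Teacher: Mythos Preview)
The paper does not actually prove this lemma: it is stated in Section~2 (``Mathematical backgrounds'') as one of the ``recalled'' versions of Gronwall's inequality, with no argument given. So there is no paper proof to compare against.

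Your proof is correct. The substitution $v=w^{1-\alpha}$ is the standard linearization, and you correctly identify and handle the only genuine technical issue, namely that $w^{1-\alpha}$ can fail to be absolutely continuous at zeros of $w$ when $\alpha>0$. The $\vp$-shift restores strict positivity, the extra error term $|a|\,\vp^{1-\alpha}$ is integrable and vanishes as $\vp\to 0^+$ since $1-\alpha>0$, and the limit passage is justified by the uniform bound $\exp(\int_s^t a)\le \exp(\|a\|_{L^1})$. All steps check out.
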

\section{Well-posedness of the problem}\label{section3}
In this section, the existence and uniqueness of solutions for problem (\ref{main}) are studied by using an implicit approximation scheme inspired by \cite{Leand} and a kind of hypo-monotonicity assumption repsectively. 
Let be given arbitrary $T>0$. First, we propose  followings assumptions.
\begin{assumption}
 For every $t\in [0,T]$ and $x\in \R^n$, the operator $A_{t,x}: \R^n \rightrightarrows \R^n $ be a maximal monotone operator such that\\
 \noindent $(1.1)$ there exists non-negative constants $L_{1}, L_2$ with $ L_2<1$ such that 
\beq
{\rm dis}(A_{t,x}, A_{s,y})\le  L_1 \vert t-s \vert+L_2\Vert x-y \Vert,\;\;\forall \;t, s \in  [0,T].
\eeq
$(1.2)$ there exists  $c_A>0$ such that
\beq
\Vert A^0_{t,x}(y) \Vert \le c_A(1+ \Vert x \Vert+\Vert y \Vert), \; t\in [0,T], x, y\in \R^n .
\eeq
\end{assumption}

\begin{assumption}
Let  $f: [0,T] \times \R^n \to \R^n$ be a continuous function. In addition, suppose that there exists $c_f>0$ such that
\beq
\Vert f(t,x) \Vert \le c_f(1+\Vert x \Vert), \;\forall \; t\in [0,T], x\in \R^n.
\eeq
\end{assumption}
We define the admissible set for (\ref{main}) as follows:
\beq
\mathcal{A}_1:=\{(t_0,x_0): x_0\in {\rm dom}(A_{t_0,x_0})\}. 
\eeq
\begin{thm} (Existence)\label{exist}
Let Assumptions 1, 2 hold. 
Then for all $(t_0,x_0) \in \mathcal{A}_1$ with $0\le t_0\le T$, the problem
\beq\nonumber
\dot{x}(t)\in -A_{t,x(t)} x(t)+f(t,x(t)),\;\;a.e. \;t\in [t_0,T], x(t_0)=x_0,
\eeq
has a  solution $x(\cdot)$. In addition, we have 
\beq
\Vert \dot{x}(t) \Vert \le m(x_0), \;\;a.e.\;  \;t\in [t_0,t_0+1],
\eeq
where $m(x_0)>0$ is defined in (\ref{mx0}) depending only on $x_0, L_1, L_2, c_A, c_f$ and $m(\cdot)$ is a continuous function w.r.t $x_0$. Therefore
\beq
\Vert x(t) \Vert \le \Vert x_0 \Vert+ m(x_0), \;\;\forall \;t\in [t_0,t_0+1],
\eeq
\end{thm}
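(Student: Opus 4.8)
The plan is to prove existence via an implicit (catching-up) discretization scheme, following the spirit of Moreau's original sweeping-process argument but adapted to the state-dependent maximal monotone setting. First I would fix a step size $h = (T-t_0)/N$, set $t_i = t_0 + ih$, and define iterates $x_{i+1}$ implicitly by
\beq\nonumber
x_{i+1} \in x_i + h f(t_i, x_i) - h A_{t_{i+1}, x_{i+1}}(x_{i+1}),
\eeq
i.e. $x_{i+1} = J^{h}_{A_{t_{i+1},x_{i+1}}}\bigl(x_i + h f(t_i,x_i)\bigr)$. The first subtlety is that this is an implicit equation in $x_{i+1}$ appearing both as the argument of the resolvent and as a parameter of the operator; I would resolve it by a fixed-point argument, using that $A_{t_{i+1},\cdot}$ varies with its state argument only through the pseudo-distance bound in Assumption $(1.1)$ with constant $L_2 < 1$. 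Concretely, the map $z \mapsto J^{h}_{A_{t_{i+1},z}}(x_i + h f(t_i,x_i))$ is a contraction for $h$ small (using Lemma~\ref{esti2mm} to control $\|J^h_{A_{t_{i+1},z}}(\cdot) - J^h_{A_{t_{i+1},z'}}(\cdot)\|$ in terms of ${\rm dis}(A_{t_{i+1},z}, A_{t_{i+1},z'}) \le L_2\|z-z'\|$), so a unique iterate exists; the condition $L_2<1$ is exactly what makes this work uniformly in $h$.

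Next I would establish the a priori bound on the discrete velocities $v_i := (x_{i+1}-x_i)/h$. Here I would use Lemma~\ref{esti2mm} with $F_1 = A_{t_i, x_i}$, $F_2 = A_{t_{i+1},x_{i+1}}$, $x = x_i + h f(t_i,x_i)$ — actually more carefully, compare $x_{i+1} = J^h_{A_{t_{i+1},x_{i+1}}}(x_i + h f(t_i,x_i))$ against the point $x_i$ using the linear-growth bounds $(1.2)$ on $\|A^0\|$ and the linear growth of $f$. This yields an estimate of the form $\|x_{i+1} - x_i\| \le$ (something involving ${\rm dis}(A_{t_i,x_i}, A_{t_{i+1},x_{i+1}}) \le L_1 h + L_2\|x_{i+1}-x_i\|$, plus $h$ times linear growth terms in $\|x_i\|, \|x_{i+1}\|$). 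Absorbing the $L_2\|x_{i+1}-x_i\|$ term into the left side (dividing by $1-L_2 > 0$) and invoking the discrete Gronwall inequality (Lemma~\ref{Gronwalldis}) gives a uniform bound $\|v_i\| \le m(x_0)$ on $[t_0, t_0+1]$, with $m(x_0)$ depending continuously on $x_0$ and on $L_1,L_2,c_A,c_f$; this is where the constant $m(x_0)$ and formula~(\ref{mx0}) get pinned down, and consequently $\|x_i\| \le \|x_0\| + m(x_0)$ on that interval.

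Then I would pass to the limit. Define the piecewise-linear interpolants $x_N(\cdot)$ and piecewise-constant interpolants $\theta_N(\cdot), \eta_N(\cdot)$ of the nodes; the uniform velocity bound gives equi-Lipschitz continuity, so by Arzel\`a–Ascoli a subsequence $x_N \to x$ uniformly on $[t_0, t_0+1]$ with $\dot x_N \rightharpoonup \dot x$ weakly in $L^2$, and $\|\dot x\| \le m(x_0)$ a.e. The inclusion reads $-\dot x_N(t) + f(\theta_N(t), \cdot) \in A_{\eta_N(t), x_N(t_{i+1})}(x_N(t_{i+1}))$ on each subinterval; since ${\rm dis}(A_{\eta_N(t), x_N(t_{i+1})}, A_{t,x(t)}) \to 0$ (by Assumption $(1.1)$, using uniform convergence of the interpolants and $\eta_N(t) \to t$), I would apply the graph-closedness Lemma~\ref{closemm} to conclude $\dot x(t) \in f(t,x(t)) - A_{t,x(t)}(x(t))$ a.e. The main obstacle, and the step requiring the most care, is precisely this limiting argument combined with the a priori estimate: one must verify that Lemma~\ref{esti2mm} can be applied with the "wrong" base operator (the estimate needs $x \in {\rm dom}(F_1)$, so bookkeeping about which iterate lies in which domain matters), and that the pseudo-distances between consecutive operators are genuinely $O(h)$ despite the state-dependence — which again relies on $L_2<1$ to close the loop. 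Extending from $[t_0,t_0+1]$ to all of $[t_0,T]$ is then routine by concatenation, re-solving on $[t_0+1, t_0+2]$ from the new initial condition $x(t_0+1) \in {\rm dom}(A_{t_0+1, x(t_0+1)})$, and so on.
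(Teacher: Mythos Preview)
Your overall architecture (implicit discretization, a priori velocity bound via Lemma~\ref{esti2mm}, discrete Gronwall, Arzel\`a--Ascoli, then graph closure via Lemma~\ref{closemm}) matches the paper. But there is one substantive divergence, and it is exactly the step you flag as ``the first subtlety.''

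\medskip

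\textbf{The scheme.} You take the \emph{fully} implicit update $x_{i+1}=J^{h}_{A_{t_{i+1},x_{i+1}}}(x_i+hf(t_i,x_i))$, with the state parameter equal to the unknown $x_{i+1}$, and then try to solve this by showing $z\mapsto J^{h}_{A_{t_{i+1},z}}(y)$ is a contraction. The paper instead uses the \emph{semi}-implicit scheme
\[
x^n_{i+1}=J^{h_n}_{A_{t^n_{i+1},\,x^n_{i}}}(x^n_i+h_nf(t^n_i,x^n_i)),
\]
with the state parameter frozen at the \emph{previous} iterate $x^n_i$. This makes the iterate a straight resolvent evaluation, so no fixed-point argument is needed at all.

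\medskip

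\textbf{Why your contraction claim is shaky.} Lemma~\ref{esti2mm} bounds $\|x-J^\lambda_{F_2}(x)\|$ for $x\in{\rm dom}(F_1)$; it does not compare $J^\lambda_{F_1}(y)$ with $J^\lambda_{F_2}(y)$. If you compute that comparison directly from the definition of ${\rm dis}$, with $u=J^h_{A_{t_{i+1},z}}(y)$ and $u'=J^h_{A_{t_{i+1},z'}}(y)$, you get
\[
\tfrac{1}{h}\|u-u'\|^2 \le {\rm dis}(A_{t_{i+1},z},A_{t_{i+1},z'})\bigl(1+\tfrac{1}{h}\|y-u\|+\tfrac{1}{h}\|y-u'\|\bigr)\le L_2\|z-z'\|\,(1+\cdots),
\]
which is a H\"older-$\tfrac12$ bound in $\|z-z'\|$, not a Lipschitz one. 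So $T$ is not a contraction with rate governed by $L_2$; at best you could try Brouwer on a suitable ball, but that needs separate a priori control on $\|y-u\|/h$ to even set up. The paper sidesteps this entirely.

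\medskip

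\textbf{Knock-on effect on the a priori estimate.} With the paper's scheme one has $x^n_i\in{\rm dom}(A_{t^n_i,x^n_{i-1}})$ by construction, and Lemma~\ref{esti2mm} with $F_1=A_{t^n_i,x^n_{i-1}}$, $F_2=A_{t^n_{i+1},x^n_i}$ yields a recursion
\[
\|x^n_{i+1}-x^n_i\|\le h_n c_1(1+\|x^n_{i+1}\|+\|x^n_i\|+\|x^n_{i-1}\|)+\tilde L_2\|x^n_i-x^n_{i-1}\|,\qquad \tilde L_2:=(1+\delta)L_2<1,
\]
which is then unrolled as a geometric series in $\tilde L_2$ before applying the discrete Gronwall lemma to $\|x^n_i\|$. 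Your version (absorb $L_2\|x_{i+1}-x_i\|$ on the left) corresponds to the fully implicit scheme and would also work \emph{if} that scheme were well-defined; the paper's recursion is one step shifted because of the lag in the state parameter. The ``bookkeeping about which iterate lies in which domain'' that you worry about is precisely what the lag resolves cleanly.

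\medskip

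\textbf{Passage to the limit.} A minor difference: rather than applying Lemma~\ref{closemm} pointwise in $t$, the paper lifts everything to $L^2([t_0,T];\R^n)$, defines maximal monotone operators $\mathcal{A}_n,\mathcal{A}$ there, shows ${\rm dis}(\mathcal{A}_n,\mathcal{A})\to 0$ via Assumption~1.1, and applies Lemma~\ref{closemm} once in $L^2$. This handles the weak-$L^2$ convergence of $\dot x_n$ in one stroke. Your pointwise sketch can be made to work too, but the $L^2$ lifting is the cleaner device here.
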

\begin{proof} We use an implicit scheme to approximate problem (\ref{main}). In details, 
let $T'=T-t_0$ and for each given positive integer $n$, we set $h_n=T'/n$ and $t^n_i=t_0+ih$ for $0\le i\le n.$  We can construct the sequence $({x}^n_i)_{0\le i\le n}$ with $x^n_0=x_0$ as follows: 
 \begin{equation}\label{discrete}
\left\{\begin{array}{l}
y^n_i= {x}^n_{i}+h_nf(t^n_i,x^n_i), \\ \\
{x}^n_{i+1}\in y_i^n -h_n A_{t^n_{i+1},  x^n_{i}}(x^n_{i+1}).
\end{array}\right.
\end{equation}
Indeed, we can compute ${x}^n_{i+1}$ by 
$$
x^n_{i+1}=(I+h_n A_{t^n_{i+1},  x^n_{i}})^{-1}(y_i^n)=J^{h_n}_{A_{t^n_{i+1},  x^n_{i}}}(y^n_i).
$$
Thus we obtain the following algorithm which is well-defined.\\

\noindent $\mathbf{Algorithm}$

\noindent \texttt{Initialization.} Let $x^n_0:=x_0, y^n_0:=x^n_{0}+h_nf(t^n_0,x^n_0).$ \\

\noindent\texttt{Iteration.} For the  current points $x^n_i$, we  compute 
\beq\label{proje}
y^n_i:=x^n_{i}+h_nf(t^n_i,x^n_i),\;\;{\rm and}\;x^n_{i+1}:=J^{h_n}_{A_{t^n_{i+1}, x^n_{i}}}(y^n_i).
\eeq
From (\ref{proje}), we have
\baq\nonumber
\Vert x^n_{i+1}-x^n_{i}\Vert&=&\Vert J^{h_n}_{A_{t^n_{i+1},x^n_i}}(y^n_i)-x^n_{i}\Vert \\
&\le&\Vert J^{h_n}_{A_{t^n_{i+1},x^n_i}}(y^n_i)- J^{h_n}_{A_{t^n_{i+1},x^n_i}}(x^n_i) \Vert +\Vert J^{h_n}_{A_{t^n_{i+1},x^n_i}}(x^n_i)-x^n_i\Vert.
\label{est1}
\eaq
Since $J^{h_n}_{A_{t^n_{i+1},x^n_i}}$ is non-expansive, one has
\baq \nonumber
\Vert J^{h_n}_{A_{t^n_{i+1},x^n_i}}(y^n_i)- J^{h_n}_{A_{t^n_{i+1},x^n_i}}(x^n_i) \Vert&\le& \Vert  y^n_i- x^n_i\Vert \\\label{est2}
&\le& h_n \Vert f(t^n_i,x^n_i)\Vert  \le h_n c_f( 1+\Vert x^n_i \Vert).
\eaq
Since $L_2<1$, we can always choose some constant $\delta>0$ such that 
\beq\label{lktilde}
\tilde{L}_2:=(1+\delta)L_{2} <1.
\eeq
 Note that $x^n_i\in {\rm dom}(A_{t^n_{i},x^n_{i-1}})$ for $i=0,..,n$ with $x^n_{-1}:=x^n_0$, by using Lemma \ref{esti2mm} and Assumption 1.1,  we have  
\baq\nonumber
\Vert J^{h_n}_{A_{t^n_{i+1},x^n_i}}(x^n_i)-x^n_i\Vert&\le&  h_n\frac{1+(4\delta+1)\Vert A^0_{t^n_{i},x^n_{i-1}}(x^n_i)\Vert}{4\delta}+(1+\delta){\rm dis}(A_{t^n_{i+1},x^n_i}, A_{t^n_{i},x^n_{i-1}})\\\nonumber
&\le& h_n  \frac{1+(4\delta+1) c_A(1+\Vert x^n_i \Vert + \Vert  x^n_{i-1} \Vert)}{4\delta}\\
&+&{(1+\delta) L_{1}}h_n+{(1+\delta)L_{2} } \Vert x^n_i-x^n_{i-1}\Vert).
\label{est3}
\eaq
From (\ref{est1}), (\ref{est2}), (\ref{lktilde}) and (\ref{est3}), one has
\baq\nonumber
\Vert x^n_{i+1}-x^n_{i}\Vert &\le& h_nc_1(1+\Vert x^n_{i+1}\Vert +\Vert x^n_i \Vert +\Vert  x^n_{i-1} \Vert) )\\
&+&\tilde{L}_2 \Vert x^n_i-x^n_{i-1}\Vert)
\eaq
where $\tilde{L}_2<1$ and 
$$
c_1:=c_f+\frac{1+(4\delta+1) c_A}{4\delta}+(1+\delta) L_{1}.
$$
Since $x^n_{-1}:=x^n_0$,  we obtain  
\baq\label{estderi}
\Vert x^n_{i+1}-x^n_{i}\Vert &\le& h_nc_1 \sum_{j=0}^i \tilde{L}^j_2(1+  \Vert x^n_{i-j+1} \Vert +  \Vert x^n_{i-j} \Vert +\Vert x^n_{i-j-1} \Vert )\\
&\le&  h_nc_1(\frac{1}{1-\tilde{L}_2}+ \sum_{j=0}^i \tilde{L}^j_2( \Vert x^n_{i-j+1} \Vert +  \Vert x^n_{i-j} \Vert +\Vert x^n_{i-j-1} \Vert ).\nonumber
\eaq
Thus  
\baqn
\Vert x^n_{i+1}-x^n_{0}\Vert &\le&  \sum_{j=0}^i  \Vert x^n_{j+1}-x^n_{j}\Vert \\
&\le& {h_nc_1}(\frac{i+1}{ {1-\tilde{L}_2}}+ \Vert x^n_{i+1}\Vert+ 3\sum_{j=0}^i \tilde{L}^j_2\sum_{j=0}^i \Vert x^n_j \Vert)\\
&\le&  \frac{c_1T'}{ {1-\tilde{L}_2}}+{h_nc_1}\Vert x^n_{i+1}\Vert +\frac{3h_nc_1}{ {1-\tilde{L}_2}}\sum_{j=0}^i \Vert x^n_j \Vert.
\eaqn
We can choose $n$ large enough such that $h_nc_1<1/2$. Then we  have 
$$
\Vert x^n_{i+1}\Vert\le c_2+c_3 h_n\sum_{j=0}^i \Vert x^n_j \Vert,
$$
where
$$
c_2:=2\Vert x_0\Vert+\frac{2c_1T'}{ {1-\tilde{L}_2}}, \;\;c_3:=\frac{6c_1}{ {1-\tilde{L}_2}}.
$$
Thus one has 
\beq
\Vert x^n_{i+1} \Vert \le M_1:=c_2 e^{c_3 T'},\;i=0, 1, \ldots,n-1,
\eeq
by  using the discrete Gronwall's inequality in Lemma \ref{Gronwalldis}.
From (\ref{estderi}), we deduce that  
\beq\label{estv}
\Vert\frac{x^n_{i+1}-x^n_{i}}{h_n}\Vert \le \frac{c_1(1+3M_1)}{1-\tilde{L}_2}:=M_2 .
\eeq
Now let us construct the sequences of functions $(x_n(\cdot))_n,$  $ (\theta_n(\cdot))_n,$ $(\eta_n(\cdot))_n,$ on $[t_0,T]$   as follows: for $0\le i \le n-1$, on $[t^n_i,t^n_{i+1})$ , we define
\beq\label{funx}
x_n(t):=x^n_i+\frac{x^n_{i+1}-x^n_i}{h_n}(t-t^n_i), 
\eeq
and 
\beq\label{funtheta}
\theta_n(t):=t^n_i,\;\;\eta_n(t):=t^n_{i+1}.
 \eeq
Thanks to (\ref{estv}), for all $t\in (t^n_{i},t^n_{i+1})$, we have 
$$\|\dot{x}_n(t)\|=\|\frac{x^n_{i+1}-x^n_i}{h_n}\|\le  M_2,$$
and 
 \beq\label{idapro}
 \sup_{t\in  [t_0,T]}\{|\theta_n(t)-t|,|\eta_n(t)-t|\}  \le h_n\to 0\; {\rm as}\;\;n\to +\infty.
 \eeq
  Therefore  $\big(x_n(\cdot)\big)_n$ is uniformly bounded and equi-Lipschitz continuous.  As a consequence of   Arzel\`a--Ascoli theorem, one can find a Lipschitz continuous  function $x(\cdot): [t_0,T]\to \R^n$  and a subsequence, still denoted by  $\big(x_n(\cdot)\big)_n$, satisfying:  
\begin{itemize}
\item $x_n(\cdot)$ converges strongly to $x(\cdot)$ in $\mathcal{C}([t_0,T];\R^n)$;
\item $\dot{x}_n(\cdot)$ converges weakly to $\dot{x}(\cdot)$ in $L^{2}([t_0,T];\R^n)$.
\end{itemize}
  Particularly, we have $x(0)=x_0.$ From (\ref{discrete}), (\ref{funx}) and (\ref{funtheta}), we obtain
\baq\label{appro}
\dot{x}_n(t)&\in&f(\theta_n(t), x_n(\theta_n(t)))
- A_{\eta_n(t), x_n(\theta_n(t)}(x_n(\eta_n(t))).
\eaq
 For each positive integer $n$,   let us define the operators $ \mathcal{A}, \mathcal{A}_n: L^{2}([t_0,T];\R^n) \to L^{2}([t_0,T];\R^n)$ as follows
$$
z^*\in \mathcal{A}(z) \Leftrightarrow z^*(t)\in A_{t, x(t)}(z(t)) \;a.e. \; t\in [t_0,T],
$$
and 
$$
z^*\in \mathcal{A}_n(z) \Leftrightarrow z^*(t)\in A_{\eta_n(t), x_n(\theta_n(t))}(z(t)) \;a.e. \; t\in [t_0,T].
$$
Using Minty's theorem, it is easy to see that  $\mathcal{A}_n, \mathcal{A}$ are maximal monotone operators since   $A_{t, x(t)}$ and $A_{\eta_n(t), x_n(\theta_n(t))}$ are maximal monotone for each $t\in [t_0,T]$. Furthermore, we have 
\baqn
&&{\rm dis}( \mathcal{A}_n,  \mathcal{A})\\
&=&\sup\Big\{\frac{  \int_{t_0}^T\langle z^*_n(t) -z^*(t),z_n(t)-z(t)\rangle dt}{1+\|z^*_n\|_{L^2}+\|z^*\|_{L^2}}:  z^*_n\in   \mathcal{A}_n(z_n), z^*\in     \mathcal{A}(z)\Big\}\\\nonumber\\
&\le &\sup\Big\{\frac{  \int_{t_0}^T{\rm dis}(A_{\eta_n(t), x_n(\theta_n(t))},A_{t,x(t)})(1+\|z^*_n(t) \|+\|z^*(t) \| ) dt}{1+\|z^*_n\|_{L^2}+\|z^*\|_{L^2}}:  z^*_n\in   \mathcal{A}_n(z_n), z^*\in     \mathcal{A}(z)\Big\}\\\nonumber
&& ({\rm \;using \;the\;definition \; of} \;{\rm dis}(A_{\eta_n(t), x_n(\theta_n(t))},A_{t,x(t)}))\\
&\le &\sup\Big\{\frac{  \int_{t_0}^T(L_{1}\vert\eta_n(t)-t \vert +L_{2}\Vert x_n(\theta_n(t)-x(t) \Vert)(1+\|z^*_n(t) \|+\|z^*(t) \|) dt}{1+\|z^*_n\|_{L^2}+\|z^*\|_{L^2}}: \\\nonumber
&& z^*_n\in   \mathcal{A}_n(z_n), z^*\in     \mathcal{A}(z)\Big\}\\\nonumber
&& {\rm (using \;Assumption\; 1\;)} \\
&\le &(L_{1} \Vert\eta_n-I \Vert_{L^2} +L_{2}\Vert x_n\circ \theta_n-x \Vert_{L^2})\sup\Big\{ \frac{1+\|z^*_n \|_{L^2}+\|z^* \|_{L^2}}{1+\|z^*_n \|_{L^2}+\|z^* \|_{L^2}}:\\\nonumber
&&z^*_n\in   \mathcal{A}_n(z_n), z^*\in     \mathcal{A}(z)\Big\}\\\nonumber
&= &L_{1} \Vert\eta_n-I \Vert_{L^2} +L_{2}\Vert x_n\circ \theta_n-x \Vert_{L^2} \to 0,
\eaqn
as $n\to +\infty.$

 Note that $\dot{x}_n$ converges weakly to $\dot{x}$ in $L^{2}([t_0,T];\R^n)$. Thus  Assumption 2 allows us to deduce  that
$$
\dot{x}_n - f(\theta_n(\cdot), x_n\circ\theta_n(\cdot)) \to \dot{x} - f(\cdot,x)
$$
 weakly in $L^{2}([t_0,T];\R^n)$.
 In addition,  $x_n\circ \eta_n$ converges strongly $x$ in $L^{2}([t_0,T];\R^n)$.  Using Lemma \ref{closemm} and (\ref{appro}), we have  
 \beq
 \dot{x}- f(\cdot,x)\in -\mathcal{A}(x),
 \eeq
 or equivalently
 \beq
 \dot{x}(t)- f(t,x(t))\in -{A}_{t,x(t)}(x(t)),\;\;a.e.\; t\in [t_0,T],
 \eeq
 which shows that $x(\cdot)$ is a solution of (\ref{main}). Note that in (\ref{estv}) if we replace $T'=T-t_0$ by $1$, we can obtain that 
 \beq
\Vert \dot{x}(t) \Vert \le m(x_0), \;\;a.e.\;  \;t\in [t_0,t_0+1],
\eeq
where
\baq\nonumber
m(x_0)&:=&c_2 e^{c_3}=(2\Vert x_0\Vert+\frac{2c_1}{ {1-\tilde{L}_2}}){\rm exp}(\frac{6c_1}{ {1-\tilde{L}_2}})\\
&=& (2\Vert x_0\Vert+\frac{2c_1}{ {1-(1+\delta){L}_2}}){\rm exp}(\frac{6c_1}{ {1-(1+\delta){L}_2}}) \label{mx0}
\eaq
and
$$
c_1:=c_f+\frac{1+(4\delta+1) c_A}{4\delta}+(1+\delta) L_{1}.
$$
  The proof is completed. 
\end{proof}
\begin{remark}
$(i)$The result in Theorem \ref{exist} is still valid for infinite dimensional Hilbert spaces with some  additional compactness assumption. Let us note that even the existence of solutions in infinite dimensional spaces for state-dependent sweeping processes, a particular case of (\ref{main}), without any compactness assumption  is still an open question.\\
$(ii)$ The single-valued perturbation $f$ can be replaced standardly by a set-valued upper semi-continuous mapping with convex weakly compact values satisfying some linear growth condition.
\end{remark}
\begin{thm} (Uniqueness) \label{uniq}
Let all assumptions in Theorem \ref{exist} hold. In addition, suppose that $f$ is Lipschitz continuous on bounded sets w.r.t the second variable and $A$ is hypo-monotone on bounded sets, in the sense that,  for given $M>0$ there exist $k_M, l_M>0$ such that for all $t\in [0,T], x_i\in \R^n \cap M\ball, x^*_i\in A_{t,x_i}(x_i)$, $i=1,2$ we have 
\beq \label{hypo}
\langle x^*_1-x^*_2, x_1-x_2 \rangle \ge -k_M \Vert x_1-x_2 \Vert^2,
\eeq
and 
$$
\Vert f(t,x_1)-f(t,x_2)\Vert\le l_M \Vert x_1-x_2 \Vert.
$$
Then for each $(t_0,x_0)\in \mathcal{A}_1$,  problem (\ref{main}) has a unique solution on $[t_0,T]$.
\end{thm}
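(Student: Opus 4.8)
The plan is a standard energy estimate: compare the two solutions through the evolution of the squared distance $w(t):=\|x_1(t)-x_2(t)\|^2$ and close a linear Gronwall inequality. The point that makes this work is that the hypo-monotonicity hypothesis (\ref{hypo}) is quantified \emph{quadratically} in $\|x_1-x_2\|$, not merely linearly.

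Let $x_1(\cdot),x_2(\cdot)$ be two solutions of (\ref{main}) on $[t_0,T]$, so that $x_1(t_0)=x_2(t_0)=x_0$. First I would read the a priori bound (\ref{estv}) from the proof of Theorem \ref{exist} with $T'=T-t_0$; it shows that each $x_i(\cdot)$ is Lipschitz continuous on the whole interval $[t_0,T]$, in particular bounded there. Hence there is $M>0$ with $x_i(t)\in M\ball$ for all $t\in[t_0,T]$, and the localized hypotheses then supply constants $k_M,l_M>0$. Put $\xi_i(t):=f(t,x_i(t))-\dot x_i(t)$; by the very definition of a solution, $\xi_i(\cdot)$ is measurable and $\xi_i(t)\in A_{t,x_i(t)}(x_i(t))$ for a.e. $t\in[t_0,T]$.

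Since $x_1-x_2$ is Lipschitz, $w$ is absolutely continuous, and for a.e. $t$,
\beq\nonumber
\tfrac12\,w'(t)=\langle f(t,x_1(t))-f(t,x_2(t)),\,x_1(t)-x_2(t)\rangle-\langle \xi_1(t)-\xi_2(t),\,x_1(t)-x_2(t)\rangle .
\eeq
By Cauchy--Schwarz together with the Lipschitz property of $f$ on $M\ball$, the first term is $\le l_M\,w(t)$; by (\ref{hypo}), applied at time $t$ to the points $x_1(t),x_2(t)\in M\ball$ and the selections $\xi_1(t),\xi_2(t)$, the second term is $\le k_M\,w(t)$. Hence $w'(t)\le 2(k_M+l_M)\,w(t)$ a.e. on $[t_0,T]$, with $w(t_0)=0$. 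Gronwall's inequality (Lemma \ref{gronwall} with $\alpha=0$ and zero source term, or its elementary form) then yields $w\equiv 0$, i.e. $x_1\equiv x_2$ on $[t_0,T]$.

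The only genuinely delicate step is the quadratic estimate for $\langle\xi_1-\xi_2,x_1-x_2\rangle$: if one tried instead to use only the pseudo-distance bound from Assumption 1.1, namely ${\rm dis}(A_{t,x_1(t)},A_{t,x_2(t)})\le L_2\|x_1(t)-x_2(t)\|$, together with an a priori bound on $\|\xi_i(t)\|$, one would obtain only $\langle\xi_1(t)-\xi_2(t),x_1(t)-x_2(t)\rangle\ge -C\|x_1(t)-x_2(t)\|$, i.e. a term of order $w^{1/2}$; the associated Gronwall estimate (Lemma \ref{gronwall} with $\alpha=\tfrac12$) does not force $w\equiv0$ from $w(t_0)=0$. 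This is exactly why the extra hypo-monotonicity hypothesis is imposed, and why $f$ is required to be Lipschitz (not merely continuous) on bounded sets; everything else — measurability of the selections, absolute continuity of $w$, and the choice of $M$ — is routine given Theorem \ref{exist}.
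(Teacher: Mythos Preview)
Your proof is correct and follows essentially the same route as the paper's: differentiate $w(t)=\|x_1(t)-x_2(t)\|^2$, bound $w'(t)$ by $2(k_M+l_M)w(t)$ using the Lipschitz and hypo-monotonicity hypotheses, and close with Gronwall's inequality (Lemma~\ref{gronwall}). Your final paragraph explaining why the quadratic hypo-monotonicity bound is indispensable (and why Assumption~1.1 alone would not suffice) is a useful observation that the paper itself does not make explicit.
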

\begin{proof}
Let $x_1(\cdot), x_2(\cdot)$ be two solutions of (\ref{main}) with the same initial conditions $x_1(t_0)=x_2(t_0)=x_0$. Then for almost all $t\in [t_0,T],$ one has
\begin{equation}
\left\{\begin{array}{l}
\dot{x}_1(t)\in f(t,x_1(t))-A_{t,x_1(t)}({x}_1(t)),\\ \\
\dot{x}_2(t)\in f(t,x_2(t))-A_{t,x_2(t)}({x}_2(t)).
\end{array}\right.
\end{equation}
Using the hypo-monotonicity of $A$ and Lipschitz continuity of $f$, we have 
\baqn
\langle \dot{x}_1(t)-\dot{x}_2(t), x_1(t)-x_2(t)\rangle &\le& \langle  f(t,x_1(t))- f(t,x_2(t)), x_1(t)-x_2(t)\rangle+k_M \Vert x_1(t)-x_2(t) \Vert^2\\
&\le& (l_M+k_M)\Vert x_1(t)-x_2(t) \Vert^2,
\eaqn
where $k_M, l_M$ are defined in (\ref{hypo}) and  $M>0$ is a constant such that 
$$\max\{\sup_{t\in [t_0,T]} \Vert x_1(t)\Vert,  \sup_{t\in [t_0,T]} \Vert x_2(t)\Vert \}\le M.$$ Then we have 
$$
\frac{d}{dt} \Vert x_1(t)-x_2(t)\Vert^2\le 2(l_M+k_M)\Vert x_1(t)-x_2(t) \Vert^2 \;\;\;a.e.\;\; t\in [t_0,T],
$$
and we obtain the conclusion  by using the continuous Gronwall's inequality in Lemma \ref{gronwall}. 
\end{proof}
\begin{remark}\label{rm1}
Let us provide some cases such that $A$ is hypo-monotone. \\
(i) Clearly if $A$ dependent only on the time, i.e. $A_{t,x}\equiv A_t$, then $A$ is hypo-monotone. \\
(ii) We show that the property also holds if $A_{t,x}(\cdot)=B_t(\cdot+\alpha x)$ where $\alpha > -1$ and $B_t$ is a maximal monotone operator for each $t\ge 0$. By using Minty's theorem, it is easy to see that $A_{t,x}$ is maximal monotone. In addition, for all $t\in [0,T], x_i\in \R^n, x^*_i\in A_{t,x_i}(x_i)$, $i=1,2$ we have
\beq
\langle x^*_1-x^*_2, x_1-x_2 \rangle= \frac{1}{1+\alpha} \langle x^*_1-x^*_2, (\alpha+1)(x_1-x_2) \rangle \ge 0. 
\eeq
(iii) If $A_{t,x}=(B^{-1}_{t,x}+D)^{-1}$ where $D$ is a positive semidefinite matrix, $B_{t,x}(\cdot)=C_t(\cdot+g(t,x) )$ and  $C_t: \R^n \rightrightarrows \R^n$ is a maximal monotone operator for each $t\in  [0,T]$ and $g:   [0,T] \times \R^n \to {\rm rge}(D+D^T)$ is a Lipschitz continuous in bounded sets w.r.t the second variable. This particular form of $A$ appears widely in Lur'e dynamical systems (see, e.g., \cite{ahl,bg1,bg2,cs,abc}).  Let us first show that $B_{t,x}$ is a maximal monotone operator and then so is $A_{t,x}$ since $D$ is a maximal monotone operator with full domain.
\begin{itemize}
\item Monotonicity: Let $y_i^*\in B_{t,x}(y_i)=C_t(y_i+g(t,x))$, $i=1,2$. Then
$$
\langle y^*_1-y^*_2, y_1-y_2 \rangle= \langle y^*_1-y^*_2, (y_1+g(t,x))-(y_2+g(t,x)) \rangle\ge 0.
$$
\item Maximality: It is equivalent to show that $B^{-1}_{t,x}$ is maximal.  Let $(y,y^*)\in \R^{2n}$, suppose that 
$$
\langle y^*-z^*, y-z \rangle \ge 0,
$$
for all $(z,z^*)$ satisfying $z\in B^{-1}_{t,x}(z^*)\Leftrightarrow z^*\in C_t(z+g(t,x)) \Leftrightarrow z+g(t,x) \in C_t^{-1}(z^*)$. We want to prove that $y\in B^{-1}_{t,x}(y^*)$. Indeed, we have 
$$
\langle y^*-z^*, (y+g(t,x))-(z+g(t,x)) \rangle \ge 0.
$$
Since $C_t^{-1}$ is maximal monotone, we must have $y+g(t,x))\in C_t^{-1}(y^*)$, or equivalently, $y\in B^{-1}_{t,x}(y^*)$. Consequently we obtain the maximality of $B_{t,x}$.
\end{itemize}
It remains to check that $A$ is hypo-monotone. For all $t\in [0,T], x_i\in \R^n \cap M\ball, x^*_i\in A_{t,x_i}(x_i)$, $i=1,2$ we have
\baqn
&&x^*_i\in A_{t,x_i}(x_i)=(B^{-1}_{t,x_i}+D)^{-1}(x_i)\\
&\Leftrightarrow& x^*_i \in B_{t,x_i}(x_i-Dx^*_i)=C_t(x_i-Dx^*_i+g(t,x_i)).
\eaqn
From the monotonicity of $C_t$, we have 
\baqn
\langle x^*_1-x^*_2, x_1-x_2 \rangle &\ge& \langle D(x^*_1-x^*_2), x^*_1-x^*_2 \rangle-\langle x^*_1-x^*_2,g(t,x_1)-g(t,x_2)\rangle\\
&\ge& c_1 \Vert x^{*im}_1-x^{*im}_2 \Vert^2 - L_g \Vert  x^{*im}_1-x^{*im}_2 \Vert \Vert x_1-x_2\Vert \;({\rm since\;rge}(g)\subset {\rm rge}(D+D^T))\\
&\ge& -\frac{L_g^2}{4c_1} \Vert x_1-x_2\Vert^2\;({\rm using \; the \; inequality \;} a^2+b^2\ge 2ab,\; a, b \in \R),
\eaqn
where $L_g$ is the Lipschitz constant of $g$  w.r.t the second variable in $M\ball$, $c_1$ is defined in Lemma \ref{estiD} and $x^{*im}$ denotes the projection of $x^*$ onto ${\rm rge}(D+D^T)$. Consequently, the conclusion follows. 
\end{remark}
\section{ Stability analysis by using nonsmooth Lyapunov pairs }
\label{section4}
In this section, we want to provide a characterization for lower semi-continuous Lyapunov pairs associated with problem (\ref{main}) by using proximal analysis. From here, we suppose that all assumptions of Theorem \ref{uniq} are satisfied, then for each
 $(t_0, x_0)\in\mathcal{A}_1$, the problem (\ref{main}) 
has a unique solution $x(\cdot)$ defined on $[t_0,+\infty)$.  
\noindent Next we recall the definition of a Lyapunov pair associated with problem (\ref{main}). Denote by
 $$\Gamma([0,+\infty)\times \R^n):=\{\varphi: [0,+\infty)\times \R^n\to {\R} \cup \{+\infty\}| \; \varphi \;{\rm is \;proper \;and\; lsc}\},$$
 and
 $$
 \Gamma_+([0,+\infty)\times \R^n):=\{\varphi: [0,+\infty)\times \R^n\to {\R}_+ \cup \{+\infty\}| \; \varphi \;{\rm is \;proper \;and\; lsc}\}.
 $$
\begin{definition}
Let $V\in \Gamma([0,+\infty)\times \R^n)$, $W\in \Gamma_+([0,+\infty)\times \R^n)$ and  $a\ge 0.$ We say that $(V, W)$ is an $a-$Lyapunov pair for problem $(\ref{main})$ if for all $(t_0, x_0) \in \mathcal{A}_1$ we have
\begin{equation}\label{501}
e^{a(t-t_0)}V\big(t,x(t; t_0;x_0)\big) +\int^t_{t_0}W\big(\tau,x(\tau; t_0;x_0)\big)d\tau \le V(t_0,x_0) \,\,\,\text{for all}\,\, t \ge t_0\ge 0,
\end{equation}
where $x(t; t_0, x_0)$ denotes the unique solution of problem $(\ref{main})$ satisfying   $x(t_0)=x_0.$
 If $a=0,$ then $(V, W)$ is called a Lyapunov pair. In addition, $V$ is called  a Lyapunov function if $W =0$.
\end{definition}
\begin{remark}
(i) Note that if the uniqueness is not available, we can deal with the weak  Lyapunov pairs by using similar arguments, i.e., the inequality (\ref{501}) is satisfied for at least one trajectory of problem (\ref{main}).\\
(ii) Let $x(\cdot):=x(\cdot;t_0,x_0)$. From Theorem \ref{exist}, we know that 
$$
\Vert x(t) \Vert \le \Vert x_0 \Vert+ m(x_0), \;\;\forall \;t\in [t_0,t_0+1],
$$
where $m(x_0)$ is defined in $(\ref{mx0})$.
Combining with Assumptions 1.2 and 2, we have 
\baqn
\Vert (f(t,x(t))-A_{t,x(t)}(x(t)))^0\Vert &\le& c_f(1+\Vert x(t) \Vert)+c_A(1+2\Vert x(t) \Vert)\\
&\le&c_f+c_A+(c_f+2c_A)(\Vert x_0 \Vert+ m(x_0)).
\eaqn
Let
\beq\label{Mx0}
M(x_0):=c_f+c_A+(c_f+2c_A+1)(\Vert x_0 \Vert+ m(x_0)).
\eeq
Then $M(x_0)>m(x_0)$, $(f(t,x(t))-A_{t,x(t)}(x(t))) \cap M(x_0) \ball \neq \emptyset$ for all $t\in [t_0,t_0+1]$ and $M(\cdot)$ is a continuous function w.r.t $x_0$.
\end{remark}

\begin{lemma}\label{zt}
Let $x(\cdot):=x(\cdot;t_0,x_0)$. We define the mapping $Z: [t_0,t_0+1] \rightrightarrows \R^n$ as follows
\beq
Z(t):=(f(t,x(t))-A_{t,x(t)}(x(t))) \cap M(x_0) \ball,
\eeq
where $M(x_0)$ is defined in $(\ref{Mx0})$. Then $Z$ has non-empty, convex compact values with closed graph and uniformly bounded. In particular, $Z$ is upper semi-continuous, i.e.,  given $t_1\in [t_0,T]$, for any $\varepsilon >0$, we can find $\delta>0$ such that
\beq
Z(t)\subset Z(t_1)+\varepsilon \ball,\;\;{\rm for} \; \;\vert t-t_1\vert<\delta.
\eeq
\end{lemma}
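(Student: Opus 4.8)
The plan is to verify the listed structural properties of $Z$ one at a time — the only substantial one being the closed graph — and then to read off upper semi-continuity from the closed graph together with the uniform bound, exploiting that we work in the finite-dimensional space $\R^n$.

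\textbf{Values and boundedness.} For each fixed $t\in[t_0,t_0+1]$ the set $A_{t,x(t)}(x(t))$ is closed and convex, being the value of a maximal monotone operator; hence $f(t,x(t))-A_{t,x(t)}(x(t))$ is closed and convex, and its intersection with the closed ball $M(x_0)\ball$ is closed, convex and bounded, i.e.\ compact. Non-emptiness of $Z(t)$ for every $t\in[t_0,t_0+1]$ is precisely what is recorded in the Remark preceding this lemma: there, from Assumptions~1.2 and~2 and the a priori bound $\|x(t)\|\le\|x_0\|+m(x_0)$ of Theorem~\ref{exist}, one gets $\|(f(t,x(t))-A_{t,x(t)}(x(t)))^0\|\le c_f+c_A+(c_f+2c_A)(\|x_0\|+m(x_0))<M(x_0)$, so the minimal-norm selection of $f(t,x(t))-A_{t,x(t)}(x(t))$ already belongs to $M(x_0)\ball$. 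Uniform boundedness is trivial since $Z(t)\subset M(x_0)\ball$ for all $t\in[t_0,t_0+1]$.

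\textbf{Closed graph.} I would take $t_k\to t_1$ in $[t_0,t_0+1]$ and $z_k\in Z(t_k)$ with $z_k\to z_1$, and write $z_k=f(t_k,x(t_k))-w_k$ with $w_k\in A_{t_k,x(t_k)}(x(t_k))$. Since $f$ is continuous and $x(\cdot)$ is continuous, $f(t_k,x(t_k))\to f(t_1,x(t_1))$, hence $w_k\to w_1:=f(t_1,x(t_1))-z_1$. To pass to the limit inside the operator I would invoke Lemma~\ref{closemm} with $F_k:=A_{t_k,x(t_k)}$, $F:=A_{t_1,x(t_1)}$, $a_k:=x(t_k)\to x(t_1)$ and $b_k:=w_k\to w_1$: Assumption~1.1 gives ${\rm dis}(A_{t_k,x(t_k)},A_{t_1,x(t_1)})\le L_1|t_k-t_1|+L_2\|x(t_k)-x(t_1)\|\to 0$, and $x(t_k)\in{\rm dom}(A_{t_k,x(t_k)})$ because $w_k\in A_{t_k,x(t_k)}(x(t_k))$. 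Lemma~\ref{closemm} then yields $w_1\in A_{t_1,x(t_1)}(x(t_1))$, so $z_1=f(t_1,x(t_1))-w_1\in f(t_1,x(t_1))-A_{t_1,x(t_1)}(x(t_1))$; since also $\|z_1\|\le M(x_0)$ by passing to the limit in $\|z_k\|\le M(x_0)$, we conclude $z_1\in Z(t_1)$.

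\textbf{Upper semi-continuity.} This now follows by a standard compactness argument: if it failed at some $t_1$, there would exist $\varepsilon>0$, a sequence $t_k\to t_1$ and points $z_k\in Z(t_k)$ with $z_k\notin Z(t_1)+\varepsilon\ball$, i.e.\ $d(z_k,Z(t_1))>\varepsilon$; since $(z_k)$ lies in the compact ball $M(x_0)\ball$, a subsequence converges to some $z_1$, which by the closed graph belongs to $Z(t_1)$, contradicting $d(z_k,Z(t_1))>\varepsilon$. I expect the closed-graph step to be the main obstacle, and within it the only point requiring care is verifying ${\rm dis}(F_k,F)\to 0$ (via Assumption~1.1 and continuity of $x(\cdot)$) so that Lemma~\ref{closemm} is applicable; the remaining verifications are routine once one recalls that maximal monotone operators have closed convex values and that bounded closed sets in $\R^n$ are compact.
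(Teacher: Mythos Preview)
Your proof is correct and follows essentially the same approach as the paper: both verify non-emptiness, convexity, compactness and uniform boundedness directly, then establish the closed graph by applying Lemma~\ref{closemm} with $F_k=A_{t_k,x(t_k)}$, $F=A_{t_1,x(t_1)}$ after checking ${\rm dis}(F_k,F)\to0$ via Assumption~1.1 and continuity of $x(\cdot)$, and finally deduce upper semi-continuity from closed graph plus uniform boundedness (the paper simply cites \cite{Clarke} for this last step, whereas you spell out the standard contradiction argument).
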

\begin{proof}
Obviously, $Z$ has non-empty, convex compact values and uniformly bounded. It remain to check that the graph of $Z$ is closed, i.e., if $y_n\in Z(t_n)$ and $y_n \to y$, $t_n \to t$, we must have $y\in Z(t)$. First we have $y\in  M(x_0) \ball$. Let $B_n:=A_{t_n,x(t_n)}, B=A_{t,x(t)}$ then $B_n, B$ are  maximal monotone operators and 
$$
{\rm dis}(B_n, B)={\rm dis}(A_{t_n,x(t_n)}, A_{t,x(t)})\le L_1\vert t_n-t \vert+L_2\Vert x(t_n)-x(t) \Vert\to 0,\;\;{\rm as}\;n\to +\infty.
$$
On the other hand, we have
$$
f(t_n,x(t_n))-y_n  \in B_n(x(t_n)), \;\; f(t_n,x(t_n))-y_n \to f(t,x(t))-y\;\;\;{\rm and}\;x(t_n)\to x(t).
$$
Using Lemma \ref{closemm}, one obtains that $f(t,x(t))-y\in B(x(t))$, or equivalently $y\in f(t,x(t))-A_{t,x(t)}(x(t))$. Consequently $y\in Z(t)$ and we have the closedness of the graph of $Z$. Then classically, one has the upper semi-continuity of $Z$ (see, e.g., \cite{Clarke}).
\end{proof}
\begin{lemma}\label{existv}
Let $x(\cdot):=x(\cdot;t_0,x_0)$. There exists a sequence a sequence $(t_n)$ such that $t_n\to t_0^+$ and 
\beq
\lim_{n\to +\infty}\frac{x(t_n)-x(t_0)}{t_n-t_0}=v\in Z(t_0),
\eeq
where the mapping $Z$ is defined in Lemma $\ref{zt}$.
\end{lemma}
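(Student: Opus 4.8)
The plan is to combine the Lipschitz regularity of $x(\cdot)$ on $[t_0,t_0+1]$ with the upper semicontinuity and convexity of $Z$ proved in Lemma \ref{zt}. As a preliminary I would record that, since the assumptions of Theorem \ref{uniq} hold, $x(\cdot)$ is precisely the solution built in Theorem \ref{exist}, so $\|\dot x(s)\|\le m(x_0)<M(x_0)$ and $\dot x(s)\in f(s,x(s))-A_{s,x(s)}(x(s))$ for a.e. $s\in[t_0,t_0+1]$; hence $\dot x(s)\in Z(s)$ for a.e. such $s$, and $\dot x$ is a bounded (in particular $L^1$) selection of $Z$ on $[t_0,t_0+1]$.

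First I would extract the sequence. Since $x(\cdot)$ is absolutely continuous, for $h\in(0,1]$
$$\frac{x(t_0+h)-x(t_0)}{h}=\frac1h\int_{t_0}^{t_0+h}\dot x(s)\,ds\in M(x_0)\,\ball .$$
As these difference quotients all lie in the compact ball $M(x_0)\ball$, the Bolzano--Weierstrass theorem yields a sequence $h_n\downarrow 0$ along which they converge to some $v\in\R^n$; setting $t_n:=t_0+h_n$ gives $t_n\to t_0^+$ and $\tfrac{x(t_n)-x(t_0)}{t_n-t_0}\to v$. It remains to identify $v$ as an element of $Z(t_0)$.

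To that end, fix $\varepsilon>0$ and invoke the upper semicontinuity of $Z$ at $t_0$ from Lemma \ref{zt}: there is $\delta>0$ with $Z(s)\subset Z(t_0)+\varepsilon\ball$ whenever $|s-t_0|<\delta$. The set $Z(t_0)+\varepsilon\ball$ is convex and compact (a sum of two such sets), hence closed and convex; therefore any $L^1$ map taking values a.e. in it has its average in it (test against the supporting half-spaces of the set). Applying this to $\dot x$ on $[t_0,t_0+h]$ for $0<h<\delta$, where $\dot x(s)\in Z(s)\subset Z(t_0)+\varepsilon\ball$ for a.e. $s$, gives
$$\frac{x(t_0+h)-x(t_0)}{h}=\frac1h\int_{t_0}^{t_0+h}\dot x(s)\,ds\in Z(t_0)+\varepsilon\ball .$$
Passing to the limit along $h_n$ yields $v\in Z(t_0)+\varepsilon\ball$; since $\varepsilon>0$ is arbitrary and $Z(t_0)$ is closed, $v\in Z(t_0)$, which is the assertion.

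The only delicate point is the set-valued averaging step in the last paragraph — that the mean value over $[t_0,t_0+h]$ of an integrable selection with values in the closed convex set $Z(t_0)+\varepsilon\ball$ stays in that set. Everything else is routine: the compactness of bounded difference quotients and the upper semicontinuity already established in Lemma \ref{zt}. I would also stress that one neither needs nor should assert the right-differentiability of $x$ at $t_0$; only one convergent subsequence of difference quotients is required.
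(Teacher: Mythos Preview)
Your argument is correct and follows essentially the same route as the paper: extract a convergent subsequence of the bounded difference quotients, then use the upper semicontinuity of $Z$ at $t_0$ together with the convexity and closedness of $Z(t_0)+\varepsilon\ball$ to trap the averages $\tfrac{1}{t_n-t_0}\int_{t_0}^{t_n}\dot x(s)\,ds$ in that set, and finally let $\varepsilon\downarrow 0$. Your write-up is in fact a bit more careful than the paper's in justifying the averaging step and in observing that only a subsequence (not full right-differentiability) is needed.
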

\begin{proof}
Let 
$$v(t):=  \frac{x(t)-x(t_0)}{t-t_0}, \;t >t_0.
$$
Then $v(t)$ is bounded by $m(x_0)$ (Theorem \ref{exist}) for all $t\in [t_0,t_0+1]$. Hence there exist  a sequence $(t_n)$ and $v\in \R^n$ such that $t_n\to t_0^+$ and $ v_n:=v(t_n)$ converges  to $v$. Let be given $\varepsilon>0$. Note that for $n$ large enough, by using Lemma \ref{zt}, we have 
$$
x'(s)\in Z(s)\subset Z(t_0)+\varepsilon \ball, \;a.e. \; s\in  [t_0, t_n].
$$
Thus
\beq
v_n=\frac{1}{t_n-t_0}\int_{t_0}^{t_n} x'(s)ds \in Z(t_0)+\varepsilon \ball,
\eeq
which implies that $v\in Z(t_0)+\varepsilon \ball$. Since $\varepsilon$ is arbitrary, we deduce that $v\in Z(t_0)$ and the conclusion follows. 
\end{proof}
\noindent  The following result  provides  necessary and sufficient conditions for a Lyapunov pair associated with problem (\ref{main}).
\begin{thm}\label{mainth3}
Let $V \in \Gamma([0,T]\times \R^n)$, $W\in \Gamma_+([0,T]\times \R^n)$, $a\ge 0$ and ${\rm dom}(V)\subset \mathcal{A}_1\;$. Then the following statements are equivalent: \\
$(i)$ For each $(t_0,x_0)\in {\rm dom}(V)$, we have
$$e^{a(t-t_0)}V\big(t,x(t)\big)+\int_{t_0}^tW\big(\tau,x(\tau)\big)d\tau\le V(t_0,x_0)\;\;\forall \;t\ge t_0,$$
where $x(\cdot):=x(\cdot;t_0,x_0)$.\\
$(ii)$ For each $(t_0,x_0)\in {\rm dom}(V)$ and $(\theta,\xi)\in \partial^PV(t_0,x_0)$, we have
\begin{equation}
\theta+\displaystyle\min_{v\in \big(f(t_0,x_0)-A_{t_0,x_0}(x_0) \big)\cap\; M(x_0)  \ball}\big\langle \xi, v \rangle +aV(t_0,x_0)+W(t_0,x_0)\le 0.
\end{equation}
$(iii)$ For each $(t_0,x_0)\in {\rm dom}(V)$,  we have
\begin{equation}
\left\{
\begin{array}{l}
\displaystyle\sup_{(\theta, \xi)\in \partial^PV(t_0,x_0)}\Big\{\theta+\min_{v\in \big(f(t_0,x_0)-A_{t_0,x_0}(x)\big)\;\cap\; M(x_0)\ball}\langle \xi, v \rangle +aV(t_0,x_0)+W(t_0,x_0)\Big\}\le 0,\\ \\
\displaystyle\sup_{(\theta, \xi)\in \partial^\infty V(t_0,x_0)} \Big\{\theta+\min_{v\in \big(f(t,x)-A_{t_0,x_0}(x)\big)\;\cap \; M(x_0)\ball}\langle \xi, v\rangle\Big\}\le 0.
\end{array}\right.
\end{equation}
$(iv)$ For each $(t_0,x_0)\in {\rm dom}(V)$, for any $M>0$ large enough, we have 
\begin{equation}
\left\{
\begin{array}{l}
\displaystyle\sup_{(\theta, \xi)\in \partial^PVt_0,x_0)}\Big\{\theta+\min_{v\in \big(f(t_0,x_0)-A_{t_0,x_0}(x)\big)\;\cap\; M\ball}\langle \xi, v \rangle +aV(t_0,x_0)+W(t_0,x_0)\Big\}\le 0,\\ \\
\displaystyle\sup_{(\theta, \xi)\in \partial^\infty V(t_0,x_0)} \Big\{\theta+\min_{v\in \big(f(t_0,x_0)-A_{t_0,x_0}(x)\big)\;\cap \; M\ball}\langle \xi, v\rangle\Big\}\le 0.
\end{array}\right.
\end{equation}
where $M(x_0)$  is defined in $(\ref{Mx0})$.
\end{thm}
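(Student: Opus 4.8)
\noindent The plan is to prove the cycle $(i)\Rightarrow(ii)\Rightarrow(iii)\Rightarrow(iv)\Rightarrow(i)$, which gives all the equivalences; throughout, $x(\cdot):=x(\cdot;t_0,x_0)$ is the unique solution from $(t_0,x_0)$, and I abbreviate $S_{t,x}:=\big(f(t,x)-A_{t,x}(x)\big)\cap M(x)\ball$ (so $Z(t_0)=S_{t_0,x_0}$ in the notation of Lemma \ref{zt}). Two of the links are soft. For $(iii)\Rightarrow(iv)$: when $M\ge M(x_0)$ one has $\big(f(t_0,x_0)-A_{t_0,x_0}(x_0)\big)\cap M(x_0)\ball\subset\big(f(t_0,x_0)-A_{t_0,x_0}(x_0)\big)\cap M\ball$, so the minimum of $\langle\xi,\cdot\rangle$ only decreases as the ball grows; hence each supremum in $(iv)$ is dominated by the corresponding one in $(iii)$ and the threshold $M_0:=M(x_0)$ works. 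For $(ii)\Rightarrow(iii)$: the first inequality of $(iii)$ is $(ii)$ restated as a supremum over $\partial^PV(t_0,x_0)$; for the second, take $(\theta,\xi)\in\partial^\infty V(t_0,x_0)$ and, via the description of $\partial^\infty$ recalled in Section \ref{section2}, pick $\alpha_k\to0^+$, $(t_k,x_k)\to(t_0,x_0)$ with $V(t_k,x_k)\to V(t_0,x_0)$, and $(\theta_k,\xi_k)\in\partial^PV(t_k,x_k)$ with $\alpha_k(\theta_k,\xi_k)\to(\theta,\xi)$ (note $(t_k,x_k)\in{\rm dom}(V)\subset\mathcal{A}_1$). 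Apply $(ii)$ at $(t_k,x_k)$, multiply by $\alpha_k>0$ and let $k\to\infty$: the terms $\alpha_k aV(t_k,x_k)$ and $\alpha_k W(t_k,x_k)$ drop out ($\alpha_k\to0$, $W\ge0$), and the only delicate point is $\liminf_k\alpha_k\min_{v\in S_{t_k,x_k}}\langle\xi_k,v\rangle\ge\min_{v\in S_{t_0,x_0}}\langle\xi,v\rangle$, which follows by extracting convergent minimizers $v_k\in S_{t_k,x_k}$ and invoking Lemma \ref{closemm}, Assumption 1 and the continuity of $M(\cdot)$, exactly as in the proof of Lemma \ref{zt}.

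\noindent For $(i)\Rightarrow(ii)$, fix $(t_0,x_0)\in{\rm dom}(V)$ and $(\theta,\xi)\in\partial^PV(t_0,x_0)$, so that for some $\sigma>0$ and all $(t,y)$ near $(t_0,x_0)$,
\[
V(t,y)\ \ge\ V(t_0,x_0)+\theta(t-t_0)+\langle\xi,y-x_0\rangle-\sigma\big(|t-t_0|^2+\|y-x_0\|^2\big).
\]
Putting $y=x(t)$ and using $(i)$ in the form $V(t,x(t))\le e^{-a(t-t_0)}\big(V(t_0,x_0)-\int_{t_0}^tW(\tau,x(\tau))\,d\tau\big)$, then rearranging, dividing by $t-t_0>0$ and letting $t\to t_0^+$ along the sequence $t_n$ of Lemma \ref{existv} (so $\tfrac{x(t_n)-x_0}{t_n-t_0}\to v\in Z(t_0)$), the quadratic remainder vanishes because $\|x(t)-x_0\|\le m(x_0)(t-t_0)$ by Theorem \ref{exist}, $\tfrac{e^{-a(t-t_0)}-1}{t-t_0}\to-a$, and $\liminf_{t\to t_0^+}\tfrac1{t-t_0}\int_{t_0}^tW(\tau,x(\tau))\,d\tau\ge W(t_0,x_0)$ by lower semicontinuity of $\tau\mapsto W(\tau,x(\tau))$. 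We get $\theta+\langle\xi,v\rangle+aV(t_0,x_0)+W(t_0,x_0)\le0$ with $v\in Z(t_0)=S_{t_0,x_0}$, i.e.\ $(ii)$ after minimizing over $S_{t_0,x_0}$.

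\noindent The substantial direction is $(iv)\Rightarrow(i)$. Fix $(t_0,x_0)\in{\rm dom}(V)$; since $x(\cdot)$ is bounded on $[t_0,T]$ (Theorem \ref{exist} applied on consecutive unit intervals) and $m(\cdot),M(\cdot)$ are continuous, fix $M>0$ so large that $\dot x(t)\in\big(f(t,x(t))-A_{t,x(t)}(x(t))\big)\cap M\ball$ for a.e.\ $t\in[t_0,T]$ and $(iv)$ holds with this $M$. Set
\[
p(t):=e^{a(t-t_0)}V\big(t,x(t)\big)+\int_{t_0}^tW\big(\tau,x(\tau)\big)\,d\tau ,
\]
so that, since $p(t_0)=V(t_0,x_0)$, statement $(i)$ is equivalent to $p$ being non-increasing on $[t_0,T]$ (which, a posteriori, forces ${\rm dom}(V)$ to be invariant along the trajectory). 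The plan is to read this as invariance of the closed set $\epi V$ under the augmented differential inclusion $\dot t=1$, $\dot x\in\big(f(t,x)-A_{t,x}(x)\big)\cap M\ball$, $\dot r=-ar-W(t,x)$, and to apply the proximal criterion for invariance of closed sets (see, e.g., \cite{Clarke}) --- a genuine equivalence here because uniqueness of solutions (Theorem \ref{uniq}) makes weak and strong invariance coincide. The criterion demands $\min_{w}\langle\nu,w\rangle\le0$ for every proximal normal $\nu=(\theta,\xi,-\lambda)$ to $\epi V$ at $(t_0,x_0,V(t_0,x_0))$ and $w$ ranging over the augmented right-hand side; the case $\lambda>0$ (normalize $\lambda=1$, use $e^{-a(t-t_0)}W\le W$) is exactly the first inequality of $(iv)$, and the case $\lambda=0$ is the second ($\partial^\infty$) inequality. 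Hence $p$ is non-increasing and $(i)$ follows.

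\noindent \textbf{Main obstacle.} The hard step is $(iv)\Rightarrow(i)$. Proximal subgradients of the lsc function $V$ only yield \emph{lower} estimates for $V$, whereas the monotonicity of $p$ requires $V(t,x(t))$ to be controlled from \emph{above}; a direct chain-rule computation therefore does not work and one must go through the epigraph-invariance (proximal-aiming) argument, which in addition has to cope with $W$ being merely lsc. The state-dependence of $A$ enters precisely here, through the need for the truncated velocity field $(t,x)\mapsto\big(f(t,x)-A_{t,x}(x)\big)\cap M\ball$ to have nonempty convex compact values and a closed graph --- which is where Assumption 1 and the stability Lemma \ref{closemm} (via the pseudo-distance estimate of Lemma \ref{esti2mm}) are decisive, exactly as in Lemmas \ref{zt}--\ref{existv}. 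Finally, the $\partial^\infty$-clause of $(iii)$--$(iv)$ is genuinely needed, not cosmetic: since ${\rm dom}(V)\subset\mathcal{A}_1$ need not be open, at the boundary points one encounters while chasing $s^*:=\sup\{s\in[t_0,T]:\ p\le p(t_0)\ \text{on}\ [t_0,s]\}$ the proximal subdifferential $\partial^PV$ may be empty, and only the singular subgradient information remains.
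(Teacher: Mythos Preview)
Your cycle $(i)\Rightarrow(ii)\Rightarrow(iii)\Rightarrow(iv)\Rightarrow(i)$ and your arguments for the first three arrows coincide with the paper's: the same use of Lemma~\ref{existv} for $(i)\Rightarrow(ii)$, the same $\alpha_k$--approximation of $\partial^\infty V$ together with the graph-closedness (Lemma~\ref{closemm} + continuity of $M(\cdot)$) to pass the minimizers $v_k$ to the limit for $(ii)\Rightarrow(iii)$, and the trivial monotonicity in the ball radius for $(iii)\Rightarrow(iv)$.

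For $(iv)\Rightarrow(i)$ your idea is the right one --- invariance of $\epi V$ under the augmented dynamics --- and it is also the paper's idea, but the paper does not invoke an abstract invariance theorem. It \emph{implements} the proximal-aiming argument directly: with $\gamma(t)=e^{-a(t-t_0)}\big(V(t_0,x_0)-\int_{t_0}^tW\big)$ and $z(t)=(t,x(t),\gamma(t))$, it sets $\eta(t)=\tfrac12 d^2(z(t),\epi V)$ and proves (Lemma~\ref{estieta}) that $\partial^C\eta(t)\subset(-\infty,N\eta(t)]$ for some $N>0$, whence Gronwall gives $\eta\equiv0$. The computation in Lemma~\ref{estieta} projects $z(t)$ onto $\epi V$ at a point $(s,u,\mu)$, splits the resulting proximal normal into the cases $\mu>\gamma(t)$ (so $(\tfrac{t-s}{\mu-\gamma(t)},\tfrac{x(t)-u}{\mu-\gamma(t)})\in\partial^PV(s,u)$) and $\mu=\gamma(t)$ (so $(t-s,x(t)-u)\in\partial^\infty V(s,u)$), and then uses $(iv)$ \emph{at the projected point} $(s,u)$ together with the hypo-monotonicity bound $(k_{M_2}+l_{M_2})\|x(t)-u\|^2$. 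Two points are worth noting. First, your phrasing checks the normal-cone condition only at $(t_0,x_0,V(t_0,x_0))$; the argument (and the paper's Lemma~\ref{estieta}) needs it at \emph{every} boundary point of $\epi V$ encountered by the projection, which is exactly why $(iv)$ is quantified over all of ${\rm dom}(V)$. Second, and more substantively, the obstacle you correctly flag --- that $W$ is only lsc, so your augmented right-hand side $\dot r=-ar-W(t,x)$ fails to have a closed graph and the off-the-shelf weak-invariance theorem does not apply --- is \emph{resolved} in the paper, not merely noted: at the very start of the proof the paper reduces without loss of generality to $W$ Lipschitz on bounded sets (citing \cite[Lemma~3.1]{ahat}), which is precisely what makes the term $L_W|\mu-\gamma(t)|\big(|t-s|+\|x(t)-u\|\big)$ in Lemma~\ref{estieta} legitimate. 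Your sketch becomes a complete proof once you add this reduction (or, equivalently, carry out the $\eta$-Gronwall computation as in Lemma~\ref{estieta}).
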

\begin{proof}
Without loss of generality, suppose that  $W$ is Lipschitz continuous on bounded sets (see \cite[Lemma 3.1]{ahat} or \cite{Clarke}). The plan of the proof is  the following: $(i) \Rightarrow (ii)\Rightarrow (iii)\Rightarrow (iv)\Rightarrow (i)$.\\

\noindent $(i) \Rightarrow (ii):$ Let $(t_0,x_0)\in {\rm dom}(V)$ and $(\theta,\xi)\in \partial^PV(t_0,x_0)$. Then  $(\theta,\xi,-1)\in N_{{\rm epi }\;V}^P\big((t_0,x_0),V(t_0,x_0)\big).$ Let $x(\cdot):=x(\cdot;t_0,x_0)$. From $(i)$, one obtains that
$$\Big((t,x(t)), e^{-a(t-t_0)}V(t_0,x_0)-e^{-a(t-t_0)}\int_{t_0}^tW\big(\tau,x(\tau)\big)d\tau\Big)\in {\rm epi}\;V\;\;\forall t\ge t_0.$$
By the definition of $N_{{\rm epi }\;V}^P(y,V(y))$, there exists $\beta>0$ such that for all $t\ge t_0$, one has
\baqn
&&\Big\langle (\theta,\xi,-1), \Big((t,x(t)), e^{-a(t-t_0)}V(t_0,x_0)-e^{-a(t-t_0)}\int_{t_0}^tW\big(\tau,x(\tau)\big)d\tau\Big)-\big((t_0,x_0),V(t_0,x_0)\big)\Big\rangle \\
&\le& \beta \|\Big((t,x(t)), e^{-a(t-t_0)}V(t_0,x_0)-e^{-a(t-t_0)}\int_{t_0}^tW\big(\tau,x(\tau)\big)d\tau\Big)-\big((t_0,x_0),V(t_0,x_0)\big)\Big\|^2,
\eaqn
which is equivalent to 
\beq\label{normal}
\langle   \theta, t-t_0\rangle+ \big\langle \xi, x(t)- x_0\big\rangle-  R(t)\le   \beta (t-t_0)^2+ \beta\|x(t)- x_0\|^2+\beta R^2(t),
\eeq
where 
$$R(t):=(e^{-a(t-t_0)}-1)V(t_0,x_0)-e^{-a(t-t_0)}\int_{t_0}^tW\big(\tau,x(\tau)\big)d\tau.$$
Using Lemma $\ref{existv}$, there exists  a sequence $(t_n)$ such that $t_n\to t_0^+$ and 
$$
\lim_{n\to+\infty} \frac{x(t_n)-x_0}{t_n-t_0}=v
$$
exists and $v\in f(t_0,x_0)-A_{t_0,x_0}(x_0) \big)\cap\; M(x_0)  \ball$.

Taking $t=t_n$ in $(\ref{normal})$. Dividing both sides of $(\ref{normal})$ by $t_n-t_0>0$, letting $n\to +\infty$ , one obtains
$$\theta+\langle \xi, v \rangle+aV(t_0,x_0)+W(t_0,x_0)\le 0.$$
Therefore, we obtain (ii).

\noindent$ (ii) \Rightarrow (iii):$  It remains to check the second inequality of $(iii).$ Let $(\theta,\xi)\in \partial^\infty V(t_0,x_0)$. Then there exist sequences $(\alpha_k)_{k\in \N}\subset \R_+$, $(t_k,x_k)_{k\in\N}$, $(\theta_k,\xi_k)_{k\in \N}$ such that $\alpha_k\to 0^+$, $(t_k,x_k) \rightarrow_V (t_0,x_0)$, $(\theta_k,\xi_k)\in \partial^P V(t_k,x_k)$ and 
$\alpha_k(\theta_k,\xi_k)\to (\theta,\xi).$ For each $k$, one can find $v_k\in \big(f(t_k,x_k)-A_{t_k,x_k}(x_k)\big)\cap M(x_k)\ball$ such that 
\beq\label{appsub}
\theta_k+\langle \xi_k, v_k \rangle + a V(y_k)+ W(y_k)\le 0.
\eeq
Given $\varepsilon>0$, for $k$ large enough, we have $v_k\in \big(-f(t_k,x_k)-A_{t_k,x_k}(x_k)\big)\cap (M(x_0)+\varepsilon)\ball$.
Since the sequence $(v_k)$ is bounded, one can extract a subsequence, without relabelling, and some $v$ such that $v_k\to v$ and $v\in \big(-f(t_0,x_0)-A_{t_0,x_0}(x_0)\big)\cap (M(x_0)+\varepsilon)\ball$. Since $\varepsilon$ is arbitrary,   we have $v\in \big(-f(t_0,x_0)-A_{t_0,x_0}(x_0)\big)\cap M(x_0)\ball$.   Multiplying both sides of (\ref{appsub}) by $\alpha_k$ and let $k\to +\infty$ then one obtains that 
$$
\theta+\langle \xi, v\rangle\le 0,
$$
which implies the  second inequality of $(iii).$\\

\noindent$ (iii) \Rightarrow (iv):$ Obviously for any $M>M(x_0)$.\\

\noindent$ (iv) \Rightarrow (i):$ Let $(t_0, x_0)\in {\rm dom}\;V$.  Let $x(\cdot)=x(\cdot;t_0,x_0)$ be the solution of (\ref{main}) with $x(t_0)=x_0$.
Given any $T>0$, we define the functions $h: [t_0,T]\to \R_+, \gamma:  [t_0,T]\to \R$, $z: [t_0,T]\to [t_0,T]\times \R^n\times \R$ and $\eta: [t_0,T] \to \R_+$ as follows
$$h(t):=\int_{t_0}^tW\big(\tau,x(\tau;x_0)\big)d\tau,\;\; \gamma(t):=e^{-a(t-t_0)}\big(V(t_0,x_0)-h(t)\big),$$
$$z(t):=\big(t,x(t;x_0),\gamma(t)\big),\;\; \eta(t):=\frac{1}{2}d^2\big(z(t),{\rm epi}\;V\big).$$
As in \cite{ahat}, $\eta$ is Lipschitz continuous on every compact interval in $(t_0,T)$ and for all $t\in (t_0,T)$, one has 
$$\partial^C\eta(t)=d\big(z(t),{\rm epi}\;V\big)\partial^Cd\big(z(\cdot),{\rm epi}\;V\big)(t)\neq \emptyset,$$
where $\partial^C$ denotes the Clarke subdifferential. 
 We have then an estimation of $\partial^C\eta$ as in Lemma \ref{estieta}.
Let $t_0<s\le t<T$. By using Gronwall's inequality one has 
\beq
e^{-Nt}\eta(t)\le e^{-Ns}\eta(s),
\eeq
where $N>0$ is defined in Lemma \ref{estieta}.
Let $s\to t_0$ then one has $d(z(t),{\rm epi}\;V)=0$ which implies that
$$e^{a(t-t_0)}V(t,x(t))+\int_{t_0}^tW(\tau, x(\tau))d\tau\le V(t_0,x_0), \;\;\forall \; t_0< t < T.$$
Since $T$ is arbitrary, we obtain the conclusion. 
\end{proof}

\noindent The following lemma  can be used to deduce $ (iv) \Rightarrow (i)$ in the proof of Theorem \ref{mainth3}.
\begin{lemma}\label{estieta}
We can find some $N>0$  such that for almost all $t\in  (t_0,T)$, one has
$$\partial^C\eta(t)\subset (-\infty,N\eta(t)].$$
\end{lemma}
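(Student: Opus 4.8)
The plan is to convert the set-valued bound into a pointwise estimate along the trajectory and then feed in the characterization $(iv)$ of Theorem~\ref{mainth3} \emph{evaluated at the metric projection point}. First I record the needed regularity: by Theorem~\ref{exist}, iterated over unit time-steps, $x(\cdot)$ is Lipschitz on every compact subinterval of $(t_0,T)$, and since $\dot h(t)=W(t,x(t))$ is bounded on compacts (we may assume $W$ locally Lipschitz, a reduction already made in the proof of Theorem~\ref{mainth3}) so is $\gamma(\cdot)$; hence $z(\cdot)$ is locally Lipschitz and, $d(\cdot,{\rm epi}\,V)$ being $1$-Lipschitz, so is $\eta$. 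Using $\partial^C\eta(t)=d\big(z(t),{\rm epi}\,V\big)\,\partial^C\big(d(z(\cdot),{\rm epi}\,V)\big)(t)$ and the Clarke chain rule for $d(\cdot,{\rm epi}\,V)\circ z$, for a.e. $t$ one has $\partial^C\eta(t)\subseteq\overline{\rm co}\,\{\langle z(t)-p,\dot z(t)\rangle:p\in{\rm proj}(z(t),{\rm epi}\,V)\}$ (this set reducing to $\{0\}$ when $z(t)\in{\rm epi}\,V$). Since $(-\infty,N\eta(t)]$ is closed and convex, it then suffices to prove $\langle z(t)-p,\dot z(t)\rangle\le N\eta(t)$ for a.e. $t$ and every such $p$; at such $t$ we may also take $\dot x(t)\in f(t,x(t))-A_{t,x(t)}(x(t))$, $\dot\gamma(t)=-a\gamma(t)-e^{-a(t-t_0)}W(t,x(t))$, and $\eta(t)>0$.

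Next, analyse the projection. For $p=(s,y,r)\in{\rm proj}(z(t),{\rm epi}\,V)$, minimising over the last coordinate forces $r=V(s,y)$, so $(s,y)\in{\rm dom}\,V\subseteq\mathcal{A}_1$; moreover $z(t)-p=(\theta,\xi,-\lambda)$ with $\theta=t-s$, $\xi=x(t)-y$, $\lambda=V(s,y)-\gamma(t)\ge0$ lies in $N^P({\rm epi}\,V,(s,y,V(s,y)))$ and satisfies $|\theta|^2+\|\xi\|^2+\lambda^2=2\eta(t)$, and the standard description of proximal normals to an epigraph gives $(\theta/\lambda,\xi/\lambda)\in\partial^PV(s,y)$ when $\lambda>0$ and $(\theta,\xi)\in\partial^\infty V(s,y)$ when $\lambda=0$. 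Fix $M$ large enough that $(iv)$ applies at all the (boundedly many) projection points in play; since $M(\cdot)$ is continuous this is possible. For $\lambda>0$, the first inequality of $(iv)$ at $(s,y)$, multiplied by $\lambda$, produces $v^*\in(f(s,y)-A_{s,y}(y))\cap M\ball$ attaining the corresponding minimum with $\theta+\langle\xi,v^*\rangle+a\lambda V(s,y)+\lambda W(s,y)\le0$. Substituting $\dot\gamma(t)$ into $\langle z(t)-p,\dot z(t)\rangle=\theta+\langle\xi,\dot x(t)\rangle-\lambda\dot\gamma(t)$ and using $\gamma(t)-V(s,y)=-\lambda$, the $V$-contributions collapse to $-a\lambda^2\le0$ and the $W$-contributions to $\lambda\big(e^{-a(t-t_0)}W(t,x(t))-W(s,y)\big)$, which is $\le 4L_W\,\eta(t)$ because $e^{-a(t-t_0)}\le1$, $W\ge0$, $W$ is locally Lipschitz, and $\lambda,|\theta|,\|\xi\|\le\sqrt{2\eta(t)}$. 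Hence $\langle z(t)-p,\dot z(t)\rangle\le\langle\xi,\dot x(t)-v^*\rangle+4L_W\eta(t)$; the case $\lambda=0$ is identical except that the $aV+W$ terms are absent.

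The crux, which I expect to be the main obstacle, is to bound $\langle\xi,\dot x(t)-v^*\rangle$ by a multiple of $\eta(t)$. Write $w_1:=f(t,x(t))-\dot x(t)\in A_{t,x(t)}(x(t))$ and $w_2:=f(s,y)-v^*\in A_{s,y}(y)$, both of norm bounded by a constant along the (locally bounded) trajectory, so that $\langle\xi,\dot x(t)-v^*\rangle=\langle\xi,f(t,x(t))-f(s,y)\rangle-\langle\xi,w_1-w_2\rangle$. For the first term I would split off $\langle\xi,f(t,x(t))-f(t,y)\rangle\le l_M\|\xi\|^2$ via the local Lipschitz-in-state bound of Theorem~\ref{uniq}, leaving the time increment $\|\xi\|\,\|f(t,y)-f(s,y)\|$ with $|\theta|\le\sqrt{2\eta(t)}$. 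For the monotone term I would combine the monotonicity/hypo-monotonicity \eqref{hypo} of $A$ (which handles the state discrepancy, up to $k_M\|\xi\|^2$) with Vladimirov's pseudo-distance and Assumption~1.1 (which handles the passage $A_{t,\cdot}\leftrightarrow A_{s,\cdot}$: the denominator $1+|w_1|+|w_2|$ is bounded and ${\rm dis}(A_{t,x(t)},A_{s,y})\le L_1|\theta|+L_2\|\xi\|$). Collecting these contributions — all governed by $\eta(t)=\tfrac12(|\theta|^2+\|\xi\|^2+\lambda^2)$, with constants depending only on $l_M,k_M,L_1,L_2$ and the trajectory bounds — gives $\langle\xi,\dot x(t)-v^*\rangle\le c\,\eta(t)$, whence $\langle z(t)-p,\dot z(t)\rangle\le N\eta(t)$ with $N:=c+4L_W$, which is the assertion. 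The genuine difficulty is structural: the inclusion governing $x$ carries the operator $A_{t,x(t)}$ at the trajectory point, whereas the subgradient data and $(iv)$ live at the projected point $(s,y)$; transferring between them, with only continuity (not Lipschitzness) in time and only pseudo-distance (not pointwise Hausdorff) closeness of the operators, is exactly where all the regularity hypotheses of Theorems~\ref{exist} and~\ref{uniq} must be invoked simultaneously and where the terms that are a priori only lower order in $\eta$ must be handled with care.
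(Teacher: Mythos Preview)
Your overall architecture mirrors the paper's proof almost exactly: both compute $\partial^C\eta(t)$ via the chain rule (the paper cites \cite[Lemma~A.3]{ahat} for the inclusion you derive), reduce to bounding $\langle z(t)-p,\dot z(t)\rangle$ for each $p=(s,u,\mu)\in\mathrm{Proj}(z(t),\mathrm{epi}\,V)$, read off a proximal or singular subgradient of $V$ at $(s,u)$ from the normal $z(t)-p$, invoke condition $(iv)$ at that point to produce a comparison vector $v$, and then control the $aV$- and $W$-contributions essentially as you do. The paper's final constant is $N=\tfrac{L_W}{2}+l_{M_2}+k_{M_2}$.

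The divergence --- and the genuine gap in your proposal --- is in the cross-term $\langle\xi,\dot x(t)-v^*\rangle$. The paper bounds $\langle x(t)-u,x^*-v\rangle$ in one stroke by $(l_{M_2}+k_{M_2})\|x(t)-u\|^2$, invoking \emph{only} the hypo-monotonicity \eqref{hypo} and the state-Lipschitzness of $f$; notice that its $N$ carries no $L_1$-term, so the pseudo-distance is not used at this step at all. Your route instead isolates the time increments and tries to absorb them via Assumption~1.1 and continuity of $f$. But those terms are of order $\sqrt{\eta(t)}$, not $\eta(t)$: the pseudo-distance estimate gives $(1+|w_1|+|w_2|)(L_1|\theta|+L_2\|\xi\|)\le C\sqrt{\eta(t)}$, and $\|\xi\|\,\|f(t,y)-f(s,y)\|$ cannot be bounded by a multiple of $|\theta|\,\|\xi\|$ at all, since $f$ is only assumed continuous in $t$. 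You flag this yourself (``a priori only lower order in $\eta$'') but do not close it, and it cannot be closed along these lines: $\eta'\le C\sqrt{\eta}$ with $\eta(t_0)=0$ does not force $\eta\equiv0$, so the subsequent Gronwall step would fail. A minor further point: ``minimising over the last coordinate forces $r=V(s,y)$'' is only guaranteed when $\lambda>0$; the paper keeps $\mu$ free and argues separately that $(\mu-\gamma(t))(\gamma(t)-V(s,u))\le0$.
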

\begin{proof}
Let  $t\in(t_0,T)$ such that $x(\cdot)$ is differentiable at $t$. If $z(t)\in {\rm epi}\;V,$ then $\partial^C\eta(t)=\{0\}$ and the conclusion holds. Otherwise, assume that $z(t)\notin {\rm epi}\;V$. By using \cite[Lemma A.3]{ahat} and noting that  $\dot{x}(t)\in f(t,x(t))-A_{t,x(t)}\big(x(t))$, we obtain 
\beq\label{esz}
\partial^C\eta(t)\subset {\rm \overline{co}}\Bigg[ \bigcup_{(s,u,\mu)\in \mathcal{M}} \langle z(t)-\left( \begin{array}{cc}
s \\ \\
u \\ \\
\mu
\end{array} \right), \left( \begin{array}{cc}
1 \\ \\
\Big(-f(t,x(t))-A_{t,x(t)}\big(x(t)\big)\Big) \\ \\
- a\gamma(t)-e^{-at}W\big(t,x(t)\big)
\end{array} \right)\Bigg],
\eeq
where $\mathcal{M}:={\rm Proj}\big(z(t),{\rm epi}\;V\big)$.
Then it is sufficient to prove that for all $(s, u,\mu)\in \mathcal{M}$ and $\forall x^*\in \Big(f(t,x(t))-A_{t,x(t)}\big(x(t)\big)\Big)$, we have
\beq
\Big \langle z(t)-\left( \begin{array}{cc}
s  \\ \\
u \\ \\
\mu
\end{array} \right), \left( \begin{array}{cc}
1  \\ \\
x^* \\ \\
- a\gamma(t)-e^{-a(t-t_0)}W\big(t,x(t)\big)
\end{array} \right)\Big\rangle\le N\eta(t),
\eeq
for some $N>0.$ Since $(s,u,\mu)\in {\rm Proj}(z(t),{\rm epi}\;V)$, the vector $z(t)-(s,u,\mu)=(t-s,x(t)-u,\gamma(t)-\mu)\in N^P_{{\rm epi}\;V}(s,u,\mu)$. Hence we have  $\gamma(t)-\mu\le 0$. If $\gamma(t)-\mu= 0$ then $(t-s,x(t)-u)\in \partial^\infty V(s,u)$ and if $\gamma(t)-\mu<0$ then $(\frac{t-s}{\mu-\gamma(t)},\frac{x(t)-u}{\mu-\gamma(t)})\in \partial^P V(s,u)$. From $iv)$, there exists $v\in \big(f(s,u)-A_{s,u}(u)\big) $ such that 
\beq\label{eswv}
t-s+\langle x(t)-u, v\rangle\le (\gamma(t)-\mu)\big(aV(s,u)+W(s,u)\big).
\eeq
On the other hand,  we  have  $\sup_{t\in [t_0,T]}\Vert x(t) \Vert \le M_1$ for some $M_1>0$. From the fact that $(s,u,\mu)\in {\rm Proj}(z(t),{\rm epi}\;V)$ and $z(t_0)\in {\rm epi}\;V$, we have 
$$
\Vert z(t)-(s,u,\mu) \Vert \le \Vert z(t)-z(0)\Vert,
$$
which implies that 
$$
\Vert u \Vert \le \Vert (s,u,\mu) \Vert\le 2\Vert z(t) \Vert+ \Vert z(0)\Vert. 
$$
Since $ z(t)$ is uniformly bounded, one can find some $M_2>M_1$ such that $u\in M_2\ball$.
Thanks to  (\ref{eswv}), the hypo-monotonicity of $A$  and the Lipschitz continuous of $f$ on $M_2\ball$, one has  
\baqn
t-s+\big\langle x(t)-u, x^*\big\rangle&=&t-s+\langle x(t)-u,x^*-v+v\rangle\\
&\le& (l_{M_2}+k_{M_2}) \|x(t)-u\|^2+ (\gamma(t)-\mu)\big(aV(s,u)+W(s,u)\big),
\eaqn
where $k_{M_2}$ and $l_{M_2}$ are defined in (\ref{hypo}).
 Note that we already have $\gamma(t)-\mu\le 0$. If $\gamma(t)-\mu < 0$ and suppose that
$V(s,u)\le\gamma(t)$.
One obtains a contradiction
$$
d(z(t),{\rm epi}V)\le d\big(z(t), (s,u,\gamma(t))\big)<d\big(z(t), (s,u,\mu)\big)=d(z(t),{\rm epi}V).
$$
Hence if $\gamma(t)-\mu < 0$, we must have $V(s,u)>\gamma(t)$. Therefore, we always obtain 
$$
\big(\mu-\gamma(t)\big)(\gamma(t)-V(s,u))\le 0.
$$
Consequently,
\baqn
&&t-s+\big\langle x(t)-u, x^*\big\rangle+a\big(\mu-\gamma(t)\big)\gamma(t)+\big(\mu-\gamma(t)\big)e^{-a(t-t_0)}W\big(t,x(t)\big)\\
&&\le  (l_{M_2}+k_{M_2}) \|x(t)-u\|^2+ (\gamma(t)-\mu)\big(aV(s,u)+W(s,u)\big)\\
&&+a\big(\mu-\gamma(t)\big)\gamma(t)+\big(\mu-\gamma(t)\big)e^{-a(t-t_0)}W\big(t,x(t)\big)\\
&&\le  (l_{M_2}+k_{M_2})\|x(t)-u\|^2+a\big(\mu-\gamma(t)\big)(\gamma(t)-V(s,u))+\big(\mu-\gamma(t)\big)\big(W(t,x(t))-W(s,u)\big)\\
&&\le (l_{M_2}+k_{M_2}) \|x(t)-u\|^2+L_W |\mu-\gamma(t)|(|t-s|+ \|x(t)-u\|)\le (\frac{L_W}{2}+ l_{M_2}+k_{M_2})\eta(t),
\eaqn
where $L_W$ is the Lipschitz constant of $W$ on  $[0, T]\times M_2\ball$.
Therefore Lemma $\ref{estieta}$ holds with $N:=\frac{L_W}{2}+ l_{M_2}+k_{M_2}$.  
\end{proof}

Now let us provide some illustrative examples as follows.
\begin{example}
Let be given $p>0$ and a differentiable function $g: [0,+\infty)\to \R$ such that $\dot{g}\le 2g$. We consider the following systems in $\R^3$:
\begin{equation}\label{ex1}
\left\{\begin{array}{l}
\dot{x}_1= -x_1-g(t)x_2,\\ \\
\dot{x}_2=x_1-x_2,\\ \\
\dot{x}_3\in -{\rm Sign}(x_3)+p\vert x_3 \vert.
\end{array}\right.
\end{equation}
Then we can reduce the inclusion (\ref{ex1}) into the our problem (\ref{main}) as follows
$$
\dot{x}\in f(t,x)-A_{t,x}(x),
$$
where
$$
f(t,x)=\left( \begin{array}{cc}
-x_1-g(t)x_2 \\ \\
x_1-x_2 \\ \\
p\vert x_3 \vert
\end{array} \right),
$$
and 
$$
A_{t,y}(x)=\left( \begin{array}{c}
0\\ \\
0 \\ \\
-{\rm Sign}(x_3)
\end{array} \right).
$$
Then all assumptions of Theorem \ref{uniq} are satisfied. Consequently, for each initial condition, the system (\ref{ex1}) has a unique solution. Note that for each $(t,x)\in [0,+\infty)\times \R^{2n}$, we can find some $M>0$ such that $f(t,x)-A_{t,x}(x)\in M\ball$ and hence $(f(t,x)-A_{t,x}(x))\cap M\ball=f(t,x)-A_{t,x}(x)$.
Let 
$$
V(t,x)=\left\{\begin{array}{l}
x_1^2+(1+g(t))x_2^2+\vert x_3\vert, \;{\rm if}\;x_3\le \frac{1}{p},\\ \\
+\infty,\; {\rm if} \;x_3> \frac{1}{p}.
\end{array}\right.
$$
Then $V$ is a lower semi-continuous function and 
$$
\partial^PV(t,x)=\left( \begin{array}{cc}
\dot{g}(t)x_2^2\\ \\
2x_1 \\ \\
2x_2(1+g(t))  \\ \\
\xi(x_3)
\end{array} \right),
$$
where
\begin{equation}
\xi(x_3):=\left\{\begin{array}{ll}
{\rm Sign}(x_3)\;\; {\rm if}\; x_3<\frac{1}{p}, \\ \\
1+\R^+\;\;\;\; {\rm if}\; x_3=\frac{1}{p},\\ \\
\emptyset\;\; \;\;\;\;\;\;\;\;\;\;\;\;{\rm if} \;x_3> \frac{1}{p}.
\end{array}\right.
\end{equation}
 For all $(t,x)\in {\rm dom}(V)$ and $(\theta,\xi)\in \partial^PV(t,x)$ we have
\baqn
&&\theta+\displaystyle\min_{v\in \big(f(t,x)-A_{t,x}(x) \big)}\big\langle \xi, v \rangle\\
&=&\dot{g}(t)x_2^2+2x_1(-x_1-g(t)x_2)+2x_2(1+g(t))(x_1-x_2) +{\rm Sign}(x_3)(-{\rm Sign}(x_3)+p\vert x_3 \vert)\\
&=&-2x_1^2+2x_1x_2-2x_2^2+x_2^2(\dot{g}(t)-2g(t))+{\rm Sign}^2(x_3)(px_3-1)\\
&\le&-x_1^2-x_2^2-(x_1-x_2)^2 \le 0.
\eaqn
Hence, $V$ is a Lyapunov function for (\ref{ex1}) by using Theorem \ref{mainth3}.
\end{example}
\begin{example}
Let be given $\alpha >0, \gamma >0$ and $\beta\in \R$. We consider the following differential inclusion in $\R^2$:
\begin{equation}\label{ex2}
\left\{\begin{array}{l}
\dot{x}_1\in -\alpha x_1+\beta x_2-N_{C(t,x_1)}(x_1),  \\ \\
\dot{x}_2\in -\beta x_1 + x_2- \gamma{\rm Sign}(x_2),\\ \\
x(0)=(x_{01}\; x_{02})^T,
\end{array}\right.
\end{equation}
where $C(t,x_1):=[-(t+2\vert x_{01}\vert ), t+2\vert x_{01}\vert]+ x_1/2$. Then we can rewrite (\ref{ex2}) into our form (\ref{main}) as follows
$$
\dot{x}\in f(t,x)-A_{t,x}(x),
$$
where
$$
f(t,x)=\left( \begin{array}{c}
 -\alpha x_1+\beta x_2 \\ \\
-\beta x_1 +x_2
\end{array} \right),
$$
and 
$$
A_{t,x}(x)=\left( \begin{array}{c}
-N_{C(t,x_1)}(x_1)\\ \\
-\gamma{\rm Sign}(x_2)
\end{array} \right).
$$
Then it is easy to see that $x_{01}\in C(0,x_{01})$ and all assumptions of Theorem \ref{uniq} are satisfied. Let us consider the function $V$ as follows
$$
V(t,x)=\left\{\begin{array}{l}
 \frac{1}{2}x_1^2+  \frac{1}{2}x_2^2, \;{\rm if}\;\vert x_2 \vert\le \gamma,\\ \\
+\infty,\; {\rm if} \;x_2> \gamma.
\end{array}\right.
$$
Then $V$ is a lower semi-continuous function and 
$$
\partial^PV(t,x)=\left( \begin{array}{c}
0 \\ \\
x_1\\ \\
\xi(x_2)
\end{array} \right),
$$
where
\begin{equation}
\xi(x_2):=\left\{\begin{array}{ll}
x_2\;\; {\rm if}\; -\gamma<x_2<\gamma, \\ \\
\{kx_2: k\ge 1\}\;\; {\rm if}\; \vert x_2 \vert=\gamma,\\ \\ 
\emptyset,\; {\rm if} \;\vert x_2 \vert > \gamma.
\end{array}\right.
\end{equation}
  Given $(t,x)\in {\rm dom}(V)$, for any $M$ large enough and for all $(\theta,\xi)\in \partial^PV(t,x)$ one has
\baqn
&&\theta+\displaystyle\min_{v\in \big(f(t,x)-A_{t,x}(x) \big)\cap M\ball}\big\langle \xi, v \rangle\\
&\le& x_1( -\alpha x_1+\beta x_2)+x_2(-\beta x_1 + x_2)-\gamma \vert x_2 \vert \\
&=&-\alpha x_1^2+\vert x_2 \vert (\vert x_2 \vert-\gamma)\\
&\le& 0.
\eaqn
Applying Theorem \ref{mainth3}, we conclude that $V$ is a Lyapunov function for (\ref{ex2}).
\end{example}
\section{Applications for sweeping processes and Lur'e dynamical systems}
In this section, we  show that the obtained results in Sections 3 and 4 can be used to study two well-known  problems: sweeping processes and Lur'e dynamical systems.
\subsection{State-dependent sweeping processes}
Let us consider the case  $A_{t,x}=N_{C(t,x)}$, the normal cone of a moving set, where $C: [0,T]\times \R^n \rightrightarrows \R^n$ has non-empty closed convex values. Thanks to Lemma \ref{hauslem},  Assumptions 1.1 is equivalent to  
\beq\label{stateswp}
d_H(C(t,x), C(s,y))\le L_1 \vert t-s \vert+ L_2 \Vert x-y \Vert,\;\;\forall\; t,s \in [0,T],\; x,y\in \R^n,
\eeq
for some constant $L_1\ge 0,$ $0\le L_2<1$.  Our  problem (\ref{main}) becomes the classical state-dependent sweeping processes. Then Theorem \ref{exist} allows us to obtain the existence result which is accordant with \cite{Kunze}. The case of infinite Hilbert spaces can be done similarly with some compactness assumption. In  \cite{Kunze}, the authors provided some examples to show that the existence result can be lost if $L_2\ge 1.$ So our upper bound for $L_2$ in   problem (\ref{main}) is optimal. In general, we do not have the uniqueness of solutions for  state-dependent sweeping processes (see, e.g.,\cite{Kunze}). Here we give a uniqueness result under the hypo-monotonicity of the normal cone, which is a corollary of Theorem \ref{uniq}. 
\begin{thm}
Let all the assumptions of Theorem \ref{uniq} hold. Then for each $x_0\in C(t_0,x_0)$ with $0\le t_0\le T$, the  following differential inclusion  
\begin{equation}\label{swp}
\left\{\begin{array}{l}
\dot{x}(t)\in f(t,x(t))-N_{C(t,x(t))}(x(t)), \;a.e. \; t\in [t_0,T],  \\ \\
x(t_0)=x_0,
\end{array}\right.
\end{equation}
 has a unique solution on $[t_0,T]$.
\end{thm}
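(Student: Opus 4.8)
The plan is to obtain this theorem as a direct corollary of Theorems \ref{exist} and \ref{uniq} by specializing the state-dependent operator to $A_{t,x}:=N_{C(t,x)}$. First I would verify Assumption 1 for this choice. Since each $C(t,x)$ is nonempty, closed and convex, $N_{C(t,x)}=\partial\iota_{C(t,x)}$ is maximal monotone with ${\rm dom}(N_{C(t,x)})=C(t,x)$; in particular $(t_0,x_0)\in\mathcal{A}_1$ if and only if $x_0\in C(t_0,x_0)$, so the standing hypothesis on the initial datum is exactly membership in the admissible set. By Lemma \ref{hauslem} one has ${\rm dis}(N_{C(t,x)},N_{C(s,y)})=d_H(C(t,x),C(s,y))$, hence the Hausdorff--Lipschitz estimate (\ref{stateswp})---which, again by Lemma \ref{hauslem}, is precisely the transcription of Assumption 1.1 in this setting---delivers Assumption 1.1 with the same constants $L_1\ge 0$, $0\le L_2<1$.

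Next I would check Assumption 1.2. For $y\in{\rm dom}(N_{C(t,x)})=C(t,x)$ the set $N_{C(t,x)}(y)$ is a convex cone containing the origin, so its minimal-norm element is $A^0_{t,x}(y)=0$. Thus $\Vert A^0_{t,x}(y)\Vert=0\le c_A(1+\Vert x\Vert+\Vert y\Vert)$ holds trivially for any $c_A>0$, and Assumption 1.2 is satisfied. Assumption 2, the Lipschitz-on-bounded-sets property of $f$ in the second variable, and the hypo-monotonicity of $A=N_{C(\cdot,\cdot)}$ on bounded sets are all part of ``all the assumptions of Theorem \ref{uniq}'' and hence are available by hypothesis; there is nothing to prove there.

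With these verifications in hand, problem (\ref{swp}) is exactly problem (\ref{main}) for $A_{t,x}=N_{C(t,x)}$ with $(t_0,x_0)\in\mathcal{A}_1$. Theorem \ref{exist} then provides a solution on $[t_0,T]$ (taking the arbitrary horizon in that theorem to be the given $T$, and using the a priori bound $\Vert\dot{x}(t)\Vert\le m(x_0)$ to rule out any breakdown), while Theorem \ref{uniq} gives uniqueness of that solution on $[t_0,T]$; this completes the argument. The only points that require a moment's care are the identification of the pseudo-distance condition with the Hausdorff--Lipschitz condition (\ref{stateswp}) via Lemma \ref{hauslem}, and the observation that the minimal-norm selection of a normal cone vanishes on its domain, so that Assumption 1.2 is automatic; neither is a genuine obstacle, and the statement is indeed a straightforward corollary of the well-posedness theory of Section \ref{section3}.
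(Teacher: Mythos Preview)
Your proposal is correct and follows exactly the approach the paper intends: the paper presents this theorem as a corollary of Theorem~\ref{uniq} (and implicitly Theorem~\ref{exist}), with the preceding paragraph already identifying ${\rm dis}(N_{C(t,x)},N_{C(s,y)})=d_H(C(t,x),C(s,y))$ via Lemma~\ref{hauslem} so that Assumption~1.1 becomes~(\ref{stateswp}). You have in fact supplied more detail than the paper itself, notably the explicit observation that $A^0_{t,x}(y)=0$ on the domain (making Assumption~1.2 automatic), which the paper leaves entirely implicit.
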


Although the literature for the well-posedness of  sweeping processes is immense, there is still no work studying Lyapunov stability for the state-dependent case. It is why the following corollary of Theorem \ref{mainth3} is interesting. Let us first introduce the admissible set for problem (\ref{swp}):
\beq
\mathcal{A}_2:=\{(t_0,x_0)\in \R_+\times \R^n: x_0 \in C(t_0,x_0)  \}.
\eeq
\begin{thm}
Let $V \in \Gamma_w([0,T]\times \R^n)$, $W\in \Gamma_+([0,T]\times \R^n)$, $a\ge 0$ and ${\rm dom}(V)\subset \mathcal{A}_2\;$. Then the following assertions are equivalent: \\
$(i)$ For each $(t_0,x_0)\in {\rm dom}(V)$, we have
$$e^{a(t-t_0)}V\big(t,x(t)\big)+\int_{t_0}^tW\big(\tau,x(\tau)\big)d\tau\le V(t_0,x_0)\;\;\forall \;t\ge 0,$$
where $x(\cdot):=x(\cdot;t_0,x_0)$.\\

\noindent $(ii)$ For each $(t,x)\in {\rm dom}(V)$ and $(\theta,\xi)\in \partial^PV(t,x)$ we have
\begin{equation}
\theta+\displaystyle\min_{v\in \big(f(t,x)-N_{C(t,x)}(x) \big)\cap\; M(x)  \ball}\big\langle \xi, v \rangle +aV(t,x)+W(t,x)\le 0.
\end{equation}
$(iii)$ For each $(t,x)\in {\rm dom}(V)$  we have
\begin{equation}
\left\{
\begin{array}{l}
\displaystyle\sup_{(\theta, \xi)\in \partial^PV(t,x)}\Big\{\theta+\min_{v\in \big(f(t,x)-N_{C(t,x)}(x)\big)\;\cap\; M(x)\ball}\langle \xi, v \rangle +aV(t,x)+W(t,x)\Big\}\le 0,\\ \\
\displaystyle\sup_{(\theta, \xi)\in \partial^\infty V(t,x)} \Big\{\theta+\min_{v\in \big(f(t,x)-N_{C(t,x)}(x)\big)\cap \; M(x)\ball}\langle \xi, v\rangle\Big\}\le 0.
\end{array}\right.
\end{equation}
$(iv)$ For each $(t,x)\in {\rm dom}(V)$, for any $M>0$ large enough, we have 
\begin{equation}
\left\{
\begin{array}{l}
\displaystyle\sup_{(\theta, \xi)\in \partial^PV(t,x)}\Big\{\theta+\min_{v\in \big(f(t,x)-N_{C(t,x)}(x)\big)\;\cap\; M\ball}\langle \xi, v \rangle +aV(t,x)+W(t,x)\Big\}\le 0,\\ \\
\displaystyle\sup_{(\theta, \xi)\in \partial^\infty V(t,x)} \Big\{\theta+\min_{v\in \big(f(t,x)-N_{C(t,x)}(x)\big)\cap \; M\ball}\langle \xi, v\rangle\Big\}\le 0.
\end{array}\right.
\end{equation}
where $M(x)$ is defined in (\ref{Mx0}).
\end{thm}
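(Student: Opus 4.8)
The plan is to read this statement as the verbatim specialization of Theorem \ref{mainth3} to the operator $A_{t,x}=N_{C(t,x)}$, so that the whole argument reduces to checking that, under the standing hypotheses of this subsection, this choice of $A$ fits the framework of Theorems \ref{exist}, \ref{uniq} and \ref{mainth3}, and that the two admissible sets $\mathcal{A}_1$ and $\mathcal{A}_2$ coincide. Once this is done, the equivalences $(i)\Leftrightarrow(ii)\Leftrightarrow(iii)\Leftrightarrow(iv)$ are exactly the conclusion of Theorem \ref{mainth3}.

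First I would verify the structural assumptions. For each $(t,x)$ the set $C(t,x)$ is nonempty, closed and convex, hence $N_{C(t,x)}$ is maximal monotone. By Lemma \ref{hauslem}, ${\rm dis}(N_{C(t,x)},N_{C(s,y)})=d_H(C(t,x),C(s,y))$, so the postulated estimate (\ref{stateswp}) with $L_1\ge 0$ and $0\le L_2<1$ is precisely Assumption 1.1. Assumption 1.2 is automatic in this case: for every $y\in{\rm dom}(N_{C(t,x)})=C(t,x)$ one has $0\in N_{C(t,x)}(y)$, hence $A^0_{t,x}(y)=0$, and the linear-growth bound $\|A^0_{t,x}(y)\|\le c_A(1+\|x\|+\|y\|)$ holds with any $c_A>0$. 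Assumption 2 on $f$ (continuity and linear growth) is assumed, and the local hypo-monotonicity of $A$ together with the local Lipschitz continuity of $f$ is part of ``all the assumptions of Theorem \ref{uniq}''. Therefore Theorems \ref{exist} and \ref{uniq} apply and, for every $(t_0,x_0)$ with $x_0\in C(t_0,x_0)$, the sweeping process (\ref{swp}) has a unique solution on $[t_0,T]$, with the a priori bounds $m(\cdot)$ and $M(\cdot)$ used in Section \ref{section4}.

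Next I would observe that, in this setting, $\mathcal{A}_1=\{(t_0,x_0):x_0\in{\rm dom}(N_{C(t_0,x_0)})\}=\{(t_0,x_0):x_0\in C(t_0,x_0)\}=\mathcal{A}_2$, so the hypothesis ${\rm dom}(V)\subset\mathcal{A}_2$ is exactly ${\rm dom}(V)\subset\mathcal{A}_1$. Replacing $A_{t_0,x_0}(x_0)$ throughout the four conditions of Theorem \ref{mainth3} by $N_{C(t_0,x_0)}(x_0)$, and keeping the constant $M(x_0)$ from (\ref{Mx0}) (now evaluated with, say, $c_A=1$), turns those four conditions into the four conditions (i)--(iv) of the present theorem word for word. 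Hence the equivalences follow immediately from Theorem \ref{mainth3}.

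There is no genuine obstacle here; the two points that deserve attention are the identification of the Vladimirov pseudo-distance with the Hausdorff distance via Lemma \ref{hauslem} --- which is what makes (\ref{stateswp}) the correct reformulation of Assumption 1.1 --- and the remark that the normal-cone operator meets the linear-growth Assumption 1.2 trivially because $0$ is always a normal vector at points of $C(t,x)$. If $\Gamma_w$ denotes a proper subclass of $\Gamma$ rather than $\Gamma_w=\Gamma$, nothing changes: the implications of Theorem \ref{mainth3} are valid for every lower semi-continuous $V$, and restricting $V$ to $\Gamma_w$ only narrows the family of admissible pairs.
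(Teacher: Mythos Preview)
Your proposal is correct and matches the paper's approach: the paper presents this theorem explicitly as a corollary of Theorem~\ref{mainth3} (it is introduced as such and carries no separate proof), obtained by specializing $A_{t,x}=N_{C(t,x)}$. Your verification that Assumptions~1.1 and~1.2 hold via Lemma~\ref{hauslem} and the trivial bound $A^0_{t,x}(y)=0$, together with the identification $\mathcal{A}_1=\mathcal{A}_2$, is exactly the content the paper leaves implicit.
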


\subsection{State-dependent Lur'e dynamical systems}
Now we consider the class of  state-dependent  Lur'e  dynamical systems (\ref{Lure0}) in the Introduction. It is known from (\ref{Lurere}) that 
 we can reduce  (\ref{Lure0})  into the following  first order differential inclusions
\begin{equation}\label{Lure}
\left\{\begin{array}{l}
\dot{x}(t)\in g(t,x(t))-B\Phi(t,x(t),x(t)), \;a.e. \; t\in [0,T],  \\ \\
x(t_0)=x_0,
\end{array}\right.
\end{equation}
where 
\beq
\Phi(t,x,y):=(F^{-1}_{t,y}+D)^{-1}Cx, \;\;(t,x,y)\in [0,T] \times \R^{2n}.
\eeq
%
The admissible set for problem (\ref{Lure}) is defined by 
\beq
\mathcal{A}_3:=\{(t_0,x_0)\in \R_+\times \R^n: (F^{-1}_{t_0,x_0}+D)^{-1}Cx_0 \neq \emptyset \}.
\eeq Let us propose the following assumptions. 
\begin{assumption}
 For every $t\in [0,T]$ and $x\in \R^n$, the operator $F_{t,x}: \R^n \rightrightarrows \R^n $ be a maximal monotone operator and  there exists $L_{F1}\ge0, 0\le L_{F2}\le \frac{c_2}{\Vert C \Vert}$ such that 
\beq
{\rm dis}(F_{t,x}, F_{s,y})\le  L_{F1} \vert t-s \vert+L_{F2}\Vert x-y \Vert,\;\;\forall \;t, s \in  [0,T],
\eeq
where $c_2$ is the smallest positive eigenvalue of $C^TC$.
\end{assumption}

\begin{assumption} The matrix $D$ is positive semidefinite, $C^TC$ is full-rank    and 
 $${\rm ker}(D+D^T)\subset {\rm ker}(PB-C^T)$$
for some symmetric positive definite matrix $P$.
\end{assumption}
\begin{assumption} For all $t\ge 0$, if  $(F^{-1}_{t,y}+D)^{-1}Cx \neq \emptyset$ for some $x, y\in \R^n$, it holds that $ {\rm rge}(D+D^T) \cap (F^{-1}_{t,y}+D)^{-1}Cx\neq \emptyset$.
\end{assumption}
\begin{assumption}  For all $t\in [0,T]$, $x\in \R^n:$ 
 $ {\rm rge}(C) \;\cap {\rm rint(rge}(F^{-1}_{t,x}+D)) \neq \emptyset $.
 \end{assumption}
 \begin{assumption}
The single-valued function $g: [0,T] \times \R^n \to \R^n$ is continuous and there exists $c_f>0$ such that 
$$
\Vert g(t,x) \Vert \le c_f (1+\Vert  x\Vert),\;\;\forall \; (t,x)\in [0,T] \times \R^n .
$$
\end{assumption}

The following lemmas are useful. 

\begin{lemma}\label{fullr}
Suppose that $CC^T$ is full-rank. Then for each $y\in \R^m$, we have
\beq
\Vert C^Ty \Vert \ge \frac{c_2}{\Vert C \Vert } \Vert y \Vert.
\eeq
where $c_2$ is the smallest positive eigenvalue of $C^TC$.
\end{lemma}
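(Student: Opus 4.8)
The plan is to reduce the claimed bound to a lower spectral bound on the symmetric matrix $CC^{T}\in\R^{m\times m}$. First I would record that $CC^{T}$ is symmetric and positive semidefinite, and that the full-rank hypothesis forces it to be positive definite. Moreover $CC^{T}$ and $C^{T}C$ share the same nonzero eigenvalues, so every (necessarily positive) eigenvalue of $CC^{T}$ is $\ge c_{2}$, where $c_{2}$ is the smallest positive eigenvalue of $C^{T}C$. Consequently $\langle CC^{T}y,y\rangle\ge c_{2}\Vert y\Vert^{2}$ for all $y\in\R^{m}$.

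Next, for a fixed $y\in\R^{m}$ (the case $y=0$ being trivial) I would chain two estimates for $\Vert CC^{T}y\Vert$. On the one hand, by the Cauchy--Schwarz inequality together with the spectral bound above,
\[
\Vert CC^{T}y\Vert\,\Vert y\Vert\ \ge\ \langle CC^{T}y,y\rangle\ \ge\ c_{2}\Vert y\Vert^{2},
\]
so that $\Vert CC^{T}y\Vert\ge c_{2}\Vert y\Vert$. On the other hand, writing $CC^{T}y=C\,(C^{T}y)$ and applying the operator-norm inequality gives $\Vert CC^{T}y\Vert\le\Vert C\Vert\,\Vert C^{T}y\Vert$. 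Combining the two,
\[
\Vert C\Vert\,\Vert C^{T}y\Vert\ \ge\ \Vert CC^{T}y\Vert\ \ge\ c_{2}\Vert y\Vert ,
\]
and since $\Vert C\Vert>0$ (otherwise $CC^{T}=0$ could not be full-rank unless $m=0$, a vacuous case), dividing by $\Vert C\Vert$ yields the asserted inequality $\Vert C^{T}y\Vert\ge\frac{c_{2}}{\Vert C\Vert}\Vert y\Vert$.

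The only point requiring a little care — which I would state explicitly rather than leave implicit — is the identification of $c_{2}$ with a genuine lower bound for the spectrum of $CC^{T}$: this rests on the standard fact that $CC^{T}$ and $C^{T}C$ have the same nonzero eigenvalues, together with the full-rank hypothesis, which guarantees that $CC^{T}$ has no zero eigenvalue, so that its smallest eigenvalue is precisely the smallest positive eigenvalue of $C^{T}C$. Everything else is elementary linear algebra, so I do not expect any real obstacle; the proof is short and follows immediately once this spectral fact is in place.
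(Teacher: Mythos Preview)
Your proof is correct and follows essentially the same route as the paper: both arguments combine the spectral lower bound $\langle CC^{T}y,y\rangle\ge c_{2}\Vert y\Vert^{2}$ with the Cauchy--Schwarz/operator-norm estimate $\langle CC^{T}y,y\rangle\le \Vert C\Vert\,\Vert C^{T}y\Vert\,\Vert y\Vert$ to conclude. You are simply more explicit about why $c_{2}$ bounds the spectrum of $CC^{T}$ from below, which the paper leaves implicit.
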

\begin{proof}
For all $y\in \R^m$, we have
$$
c_2 \Vert y \Vert^2 \le \langle CC^Ty, y\rangle\le \Vert C \Vert \Vert C^Ty \Vert  \Vert y \Vert,
$$
and the conclusion follows.
\end{proof}
The following result is similar to \cite[Lemma 11]{Leand}, where the case $F_{t,x}\equiv N_{C(t,x)}$ is considered. For the completeness, we recall it here.
\begin{lemma}\label{Lipschitz}
Suppose that Assumptions $3, 4, 5$ are satisfied. Then we can find $\beta_1, \beta_2>0$ such that the single-valued minimal-norm  function 
$\Phi^0:  [0,T] \times \R^{2n}\to  {\rm rge}(D+D^T), (t,x,y)\mapsto \Phi^0(t,x,y)$ satisfies the following properties: \\

a) $\Vert \Phi^0(t,x,y) \Vert \le \beta_1(1+\Vert x \Vert+\Vert y \Vert),\;\;\forall (t,x,y)\in  {\rm dom}(\Phi^0)$.\\

b) $\Vert \Phi^0(t_1,x_1,y_1) -\Phi^0(t_2,x_2,y_2)\Vert^2 \le \beta_2 \Vert x_1-x_2 \Vert^2+ \beta_2 (1+\Vert \Phi^0(t_1,x_1,y_1)\Vert+\Vert \Phi^0(t_2,x_2,y_2)\Vert)(\vert t_1-t_2 \vert+\Vert y_1-y_2  \Vert),\;\; \forall \;(t_i,x_i,y_i)\in  {\rm dom}(\Phi^0), i=1, 2$.
In particular, the function $(t,x)\to \Phi^0(t,x, x)$ is continuous. 
\end{lemma}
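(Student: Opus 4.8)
\noindent\emph{Sketch of the intended proof.} The plan is to work from the elementary reformulation of $\Phi$: for any $(t,x,y)$ and any $\lambda$,
\[
\lambda\in\Phi(t,x,y)=(F^{-1}_{t,y}+D)^{-1}Cx \iff Cx-D\lambda\in F^{-1}_{t,y}(\lambda)\iff \lambda\in F_{t,y}(Cx-D\lambda).
\]
First I would check that $\Phi^0(t,x,y)$ is indeed well defined as \emph{the} element of $\Phi(t,x,y)\cap{\rm rge}(D+D^T)$: Assumption 5 makes this intersection nonempty whenever $\Phi(t,x,y)\neq\emptyset$, and if $\lambda_1,\lambda_2$ both lie in it, monotonicity of $F^{-1}_{t,y}$ at the pairs $(\lambda_i,Cx-D\lambda_i)$ gives $\langle D(\lambda_1-\lambda_2),\lambda_1-\lambda_2\rangle\le 0$, hence $=0$ since $D$ is positive semidefinite (Assumption 4); then $\lambda_1-\lambda_2\in{\rm ker}(D+D^T)\cap{\rm rge}(D+D^T)=\{0\}$, using that $D+D^T$ is symmetric. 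The common engine for (a) and (b) is then: writing $\lambda_i:=\Phi^0(t_i,x_i,y_i)\in F_{t_i,y_i}(Cx_i-D\lambda_i)$ with $\lambda_i\in{\rm rge}(D+D^T)$, use monotonicity of $F$, and invoke Lemma \ref{estiD} to turn the quadratic form $\langle D(\cdot),(\cdot)\rangle$ evaluated on the relevant difference into $c_1\|\cdot\|^2$.

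Part (b) will follow almost immediately. Since $Cx_i-D\lambda_i\in{\rm dom}(F_{t_i,y_i})$ and $\lambda_i\in F_{t_i,y_i}(Cx_i-D\lambda_i)$, the definition of the Vladimirov pseudo-distance gives
\[
\langle\lambda_1-\lambda_2,\,C(x_2-x_1)-D(\lambda_2-\lambda_1)\rangle\le(1+\|\lambda_1\|+\|\lambda_2\|)\,{\rm dis}(F_{t_1,y_1},F_{t_2,y_2}),
\]
and Assumption 3 bounds the right-hand side by $(1+\|\lambda_1\|+\|\lambda_2\|)(L_{F1}|t_1-t_2|+L_{F2}\|y_1-y_2\|)$. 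Since $\lambda_1-\lambda_2\in{\rm rge}(D+D^T)$, Lemma \ref{estiD} yields $c_1\|\lambda_1-\lambda_2\|^2\le\langle D(\lambda_1-\lambda_2),\lambda_1-\lambda_2\rangle$; isolating this term on the left and absorbing the remaining cross term via Young's inequality $\|C\|\,\|\lambda_1-\lambda_2\|\,\|x_1-x_2\|\le\frac{c_1}{2}\|\lambda_1-\lambda_2\|^2+\frac{\|C\|^2}{2c_1}\|x_1-x_2\|^2$ produces (b), with $\beta_2$ depending only on $c_1,\|C\|,L_{F1},L_{F2}$.

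The one step that needs real work is (a), where the comparison has to be made with a point on the graph of $F_{t,y}$ whose norm is controlled uniformly in $(t,y)$ and grows at most linearly in $\|y\|$. I would fix once and for all an anchor $w_0\in{\rm dom}(F_{0,0})$ (nonempty, $F_{0,0}$ being maximal monotone), put $w_{t,y}:=J^1_{F_{t,y}}(w_0)\in{\rm dom}(F_{t,y})$ and $\zeta_{t,y}:=F^1_{t,y}(w_0)=w_0-w_{t,y}\in F_{t,y}(w_{t,y})$, and observe that Lemma \ref{esti2mm} (with $F_1=F_{0,0}$, $F_2=F_{t,y}$, $\lambda=\delta=1$) together with ${\rm dis}(F_{0,0},F_{t,y})\le L_{F1}T+L_{F2}\|y\|$ (Assumption 3, plus symmetry of ${\rm dis}$) bounds $\|\zeta_{t,y}\|=\|w_0-w_{t,y}\|$, hence also $\|w_{t,y}\|$, by $c_0(1+\|y\|)$ for a constant $c_0$ depending only on $w_0$, $\|F^0_{0,0}(w_0)\|$, $L_{F1}$, $L_{F2}$, $T$. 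Then monotonicity of $F_{t,y}$ between $(Cx-D\lambda,\lambda)$ and $(w_{t,y},\zeta_{t,y})$ gives $\langle D\lambda,\lambda\rangle\le\langle\lambda,Cx-w_{t,y}\rangle-\langle\zeta_{t,y},Cx-D\lambda-w_{t,y}\rangle$; Lemma \ref{estiD} replaces the left side by $c_1\|\lambda\|^2$ (with $\lambda:=\Phi^0(t,x,y)\in{\rm rge}(D+D^T)$), and the right side is at most $P\|\lambda\|+Q$ with $P\le c_0'(1+\|x\|+\|y\|)$ and $Q\le c_0'(1+\|x\|+\|y\|)^2$, so solving this quadratic inequality in $\|\lambda\|$ gives (a). Finally, the continuity of $(t,x)\mapsto\Phi^0(t,x,x)$ is read off from (a) and (b): along any $(t_k,x_k)\to(t_0,x_0)$ in the domain, (a) keeps $\|\Phi^0(t_k,x_k,x_k)\|$ bounded, and then (b) applied to the triples $(t_k,x_k,x_k)$ and $(t_0,x_0,x_0)$ forces $\Phi^0(t_k,x_k,x_k)\to\Phi^0(t_0,x_0,x_0)$. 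The main obstacle is exactly the uniform anchor-point estimate in (a); everything else is monotonicity combined with Lemma \ref{estiD} and Young's inequality.
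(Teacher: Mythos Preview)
Your argument for (b) is essentially identical to the paper's: both combine the Vladimirov pseudo-distance bound with Lemma~\ref{estiD} and Young's inequality in exactly the way you describe.

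Your approach to (a) differs from the paper's. The paper simply fixes one reference point $(t_0,x_0,x_0)\in{\rm dom}(\Phi^0)$ (any $(t_0,x_0)\in\mathcal{A}_3$) and applies the very same pseudo-distance comparison used in (b), but with one endpoint frozen at this reference; the quantity $\|\Phi^0(t_0,x_0,x_0)\|$ then enters only as a fixed constant. Your route---building a uniform family of anchor points $(w_{t,y},\zeta_{t,y})$ on each graph of $F_{t,y}$ via the resolvent and invoking Lemma~\ref{esti2mm}---is more laborious but also valid, and has the mild advantage of not appealing to nonemptiness of $\mathcal{A}_3$ (only of ${\rm dom}(F_{0,0})$, which is automatic for a maximal monotone operator). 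The paper's route is shorter precisely because it reuses the mechanism of (b) verbatim rather than introducing a separate anchor construction.

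One small gap: you verify that $\Phi(t,x,y)\cap{\rm rge}(D+D^T)$ is a singleton, but you do not show that this singleton coincides with the \emph{minimal-norm} element of $\Phi(t,x,y)$, which is what $\Phi^0$ denotes and what the lemma asserts (the codomain ${\rm rge}(D+D^T)$ is part of the statement). The paper fills this by noting that your own computation $\langle D(z_1-z_0),z_1-z_0\rangle=0$ in fact holds for \emph{any} $z_1\in\Phi(t,x,y)$ compared against the element $z_0\in\Phi(t,x,y)\cap{\rm rge}(D+D^T)$; decomposing $z_1=z_1^{im}+z_1^{ker}$ orthogonally along ${\rm rge}(D+D^T)\oplus{\rm ker}(D+D^T)$ then forces $z_1^{im}=z_0$, whence $\|z_1\|^2=\|z_0\|^2+\|z_1^{ker}\|^2\ge\|z_0\|^2$. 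This one extra line is needed before you can invoke Lemma~\ref{estiD} on $\lambda=\Phi^0(t,x,y)$.
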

\begin{proof}
a) Let be given $(t,x,y)\in {\rm dom}(\Phi^0)$, which deduces that $(F^{-1}_{t,y}+D)^{-1}Cx \neq \emptyset$. Using Assumption $5$, we can find some $z_0\in {\rm rge}(D+D^T) \cap (F^{-1}_{t,y}+D)^{-1}(Cx)={\rm rge}(D+D^T) \cap  \Phi(t,x,y)$. Let us prove that $\Phi^0(t,x,y)=z_0 \in {\rm rge}(D+D^T).$   It is enough to show that $\Vert z_1 \Vert \ge \Vert z_0\Vert$ for all $z_1\in \Phi(t,x,y)$. Note that we can always write uniquely $z_1=z_1^{im}+z_1^{ker}$ where $z_1^{im}\in {\rm rge}(D+D^T), z_1^{ker}\in {\rm ker}(D+D^T)$ satisfy $\langle z_1^{im}, z_1^{ker} \rangle=0$. We have 
\beq\label{belongphi}
z_i\in (F^{-1}_{t,y}+D)^{-1}(Cx) \Leftrightarrow z_i \in F_{t,y}(Cx-Dz_i), \;i=0, 1.
\eeq
Using the monotonicity of $F_{t,y}$ and $D$, we imply  that $\langle D(z_0-z_1), z_0-z_1 \rangle =0$. It means that $z_1-z_0=z_1^{im}+z_1^{ker}-z_0 \in {\rm ker}(D+D^T)$ and hence $z_1^{im}-z_0 \in {\rm ker}(D+D^T) \cap {\rm rge}(D+D^T)=\{0\}$.
Therefore 
\beq\nonumber
\Vert z_1 \Vert^2=  \Vert z_1^{im}\Vert^2+\Vert z_1^{ker} \Vert^2=\Vert z_0  \Vert^2+\Vert z_1^{ker} \Vert^2 \ge \Vert z_0  \Vert^2,
\eeq
and  one obtains that $\Phi^0(t,x,y)=z_0 \in {\rm rge}(D+D^T)$. 

 Fix $(t_0,x_0,x_0)\in {\rm dom}(\Phi^0)$, where $(t_0, x_0)$ is a point in  $\mathcal{A}_3$.    Using the definition of ${\rm dis}(F_{t,y}, F_{t_0,x_0})$ and the fact that 
 $$
 \Phi^0(t_0,x_0,x_0) \in (F^{-1}_{t_0,x_0}+D)^{-1}(Cx_0) \Leftrightarrow \Phi^0(t_0,x_0,x_0) \in F_{t_0,x_0}(Cx_0-D \Phi^0(t_0,x_0,x_0)),
 $$
 $$
  \Phi^0(t,x,y)\in (F^{-1}_{t,y}+D)^{-1}(Cx) \Leftrightarrow  \Phi^0(t,x,y)\in F_{t,y}(Cx-D  \Phi^0(t,x,y)),
 $$
 one has 
 \baq\nonumber
&& \langle C(x-x_0), \Phi^0(t,x,y)-\Phi^0(t_0,x_0,x_0) \rangle \\\nonumber
 &\ge&  \langle D(\Phi^0(t,x,y)-\Phi^0(t_0,x_0,x_0)), \Phi^0(t,x,y)-\Phi^0(t_0,x_0,x_0)\rangle\\\nonumber
& - &  (1+\Vert\Phi^0(t,x,y)\Vert+\Vert \Phi^0(t_0,x_0,x_0)\Vert) (L_{1} \vert t-t_0 \vert+   L_{2}\Vert y-x_0 \Vert)\\\nonumber
 &\ge& c_1  \Vert\Phi^0(t,x,y)-\Phi^0(t_0,x_0,x_0)\Vert^2\\
 &-&(1+\Vert\Phi^0(t,x,y)\Vert+\Vert \Phi^0(t_0,x_0,x_0)\Vert) (L_{1}T+L_{2}\Vert y-x_0 \Vert),
 \label{est1t2}
 \eaq
 where $c_1>0$ is defined in Lemma \ref{estiD}.
  Hence there exists some $\beta>0$ such that 
\baqn
  \Vert\Phi^0(t,x,y)\Vert^2 &\le& \Vert\Phi^0(t,x,y)\Vert (\beta \Vert x \Vert+\beta \Vert y \Vert+ \beta) + \beta (\Vert x \Vert+\beta \Vert y \Vert+1)\\
  &\le& \beta(\Vert\Phi^0(t,x,y)\Vert+1)(\Vert x \Vert+ \Vert y \Vert+1)
\eaqn
 and one obtains the conclusion  with $\beta_1:=2\beta+1$. \\
 
 b) Similarly as in (\ref{est1t2}),  we have for every $(t_i,x_i,y_i)\in  {\rm dom}(\Phi^0), i=1, 2$ that
 \baqn
 && \langle C(x_1-x_2), \Phi^0(t_1,x_1,y_1)-\Phi^0(t_2,x_2,y_2) \rangle 
 \ge c_1  \Vert\Phi^0(t_1,x_1,y_1)-\Phi^0(t_2,x_2,y_2)\Vert^2\\
 &-&(1+\Vert\Phi^0(t_1,x_1,y_1)\Vert+\Vert \Phi^0(t_2,x_2,y_2)\Vert) (L_{1}\vert t_1-t_2\vert+L_{2}\Vert y_1-y_2 \Vert).
 \eaqn
On the other hand
 \baqn
 && \langle C(x_1-x_2), \Phi^0(t_1,x_1,y_1)-\Phi^0(t_2,x_2,y_2) \rangle \\
  &\le& \frac{c_1}{2}\Vert\Phi^0(t_1,x_1,y_1)-\Phi^0(t_2,x_2,y_2)\Vert^2+\frac{\Vert C \Vert^2}{2c_1}\Vert x_1-x_2 \Vert^2,
 \eaqn
 and thus  the conclusion follows. 
\end{proof}
Now we are ready for the well-posedness of (\ref{Lure0}).
\begin{thm} (Existence)\label{exlure}
Let Assumptions 3-7 hold. Then for each $(t_0,x_0)\in \mathcal{A}_3$,  problem (\ref{Lure0}) has a solution. 
\end{thm}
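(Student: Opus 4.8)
The plan is to put the Lur'e system (\ref{Lure0}) into the abstract form (\ref{main}) and then invoke Theorem \ref{exist}. As recalled in the Introduction, eliminating $y$ and $\lambda$ (use $y=Cx+D\lambda$ and $-\lambda\in F_{t,x}(y)$, so that $-\lambda\in(F^{-1}_{t,x}+D)^{-1}(Cx)=\Phi(t,x,x)$) turns (\ref{Lure0}) into (\ref{Lure}), i.e. $\dot x(t)\in g(t,x(t))-B\Phi(t,x(t),x(t))$ with $\Phi(t,x,y)=(F^{-1}_{t,y}+D)^{-1}Cx$. It therefore suffices to define, for each $(t,y)$, the operator $A_{t,y}(x):=B\Phi(t,x,y)$ and to check that the pair $(A,g)$ satisfies Assumptions 1 and 2; Theorem \ref{exist} then produces a solution of (\ref{Lure}), hence of (\ref{Lure0}).

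First I would establish the maximal monotone structure of $x\mapsto A_{t,y}(x)$. Since $B$ is a general matrix this operator is not monotone for the Euclidean product, so I would pass to the equivalent inner product $\langle\cdot,\cdot\rangle_P:=\langle P\cdot,\cdot\rangle$ furnished by Assumption 4, equivalently work in the variable $z=P^{1/2}x$. Writing, as in the proof of Lemma \ref{Lipschitz}, any selection of $\Phi$ as $\Phi^0(t,x,y)$ plus an element of $\ker(D+D^T)$ and using the inclusion $\ker(D+D^T)\subset\ker(PB-C^T)$, the cross term generated by $PB\neq C^T$ is absorbed and $P$-monotonicity reduces to monotonicity of $F^{-1}_{t,y}$ together with $D\succeq 0$. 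Maximality would be obtained from Minty's theorem (Proposition \ref{minty}): solving $z\in(I+A_{t,y})(x)$ amounts, after the change of variables, to a generalized equation of the form $Cx+(\text{affine})\in\mathrm{rge}(F^{-1}_{t,y}+D)$, which is solvable thanks to the constraint qualification $\mathrm{rge}(C)\cap\mathrm{rint}(\mathrm{rge}(F^{-1}_{t,x}+D))\neq\emptyset$ of Assumption 6 (Assumption 5 being what guarantees that the minimal-norm selection $\Phi^0$ lies in $\mathrm{rge}(D+D^T)$ in the first place). This gives the maximal monotonicity required in Assumption 1.

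Next I would verify the two quantitative parts of Assumption 1. The linear growth $\Vert A^0_{t,y}(x)\Vert\le c_A(1+\Vert x\Vert+\Vert y\Vert)$ follows at once from $A^0_{t,y}(x)=B\Phi^0(t,x,y)$ and Lemma \ref{Lipschitz}a). For the pseudo-distance estimate $\mathrm{dis}(A_{t,y_1},A_{s,y_2})\le L_1|t-s|+L_2\Vert y_1-y_2\Vert$ with $L_2<1$, I would propagate $\mathrm{dis}(F_{t,y_1},F_{s,y_2})\le L_{F1}|t-s|+L_{F2}\Vert y_1-y_2\Vert$ (Assumption 3) through the operations $F\mapsto F^{-1}$, $F^{-1}\mapsto F^{-1}+D$, inversion, and pre/post-composition by $C$ and $B$; the crucial point is that Lemma \ref{fullr} bounds the amplification factor by $\Vert C\Vert/c_2$, so the hypothesis $L_{F2}\le c_2/\Vert C\Vert$ yields precisely $L_2<1$ (the same mechanism that makes the state-dependent sweeping process $F_{t,x}=N_{C(t,x)}$ work). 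Assumption 2 holds for $f=g$ directly by Assumption 7. With every hypothesis of Theorem \ref{exist} in place, the existence of a solution of (\ref{Lure}), and hence of (\ref{Lure0}), follows.

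I expect the main obstacle to lie in the second step: simultaneously obtaining monotonicity (the cross term coming from $PB-C^T$ must be controlled using only the kernel inclusion of Assumption 4 together with the splitting of $\Phi$ along $\mathrm{rge}(D+D^T)$ and $\ker(D+D^T)$) and maximality (converting the constraint qualifications of Assumptions 5--6 into surjectivity of $I+A_{t,y}$ via Minty), since these are exactly the places where the data $B,C,D$ and the operator $F$ must cooperate. The $\mathrm{dis}$-propagation and the growth bounds are technical but essentially routine once the sharp constant from Lemma \ref{fullr} is tracked.
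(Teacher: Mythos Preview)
Your reduction to (\ref{Lure}) and the plan to invoke Theorem \ref{exist} are correct, but taking $A_{t,y}(x)=B\Phi(t,x,y)$ creates a genuine gap at the monotonicity step. Assumption 4 says $PB=C^T$ on $\ker(D+D^T)$, \emph{not} on $\mathrm{rge}(D+D^T)$. When you decompose selections as $\Phi^0+k$ with $k\in\ker(D+D^T)$ and compute the $P$-inner product, the piece $(PB-C^T)(k_1-k_2)$ indeed vanishes, but $(PB-C^T)\bigl(\Phi^0(t,x_1,y)-\Phi^0(t,x_2,y)\bigr)$ does not, and the resulting cross term $\langle(PB-C^T)(\Phi^0_1-\Phi^0_2),\,x_1-x_2\rangle$ has no definite sign. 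At best you obtain hypo-monotonicity (absorbing this term against the coercivity $\langle D\,\cdot,\cdot\rangle\ge c_1\|\cdot\|^2$ on $\mathrm{rge}(D+D^T)$), whereas Theorem \ref{exist} requires genuine maximal monotonicity of $A_{t,y}$. The $\mathrm{dis}$-propagation suffers from the same defect: post-composition by $B$ gives no lower bound of the type $\|Bx^*\|\ge c\|x^*\|$, so Lemma \ref{fullr} is unavailable and the constant $L_2<1$ cannot be secured along your route.

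The paper avoids this not by changing the metric but by a splitting: it writes $B\Phi=(B-C^T)\Phi+C^T\Phi$ and sets
\[
f(t,x):=g(t,x)-(B-C^T)\Phi(t,x,x),\qquad A_{t,y}(x):=C^T(F^{-1}_{t,y}+D)^{-1}Cx.
\]
Since any two selections of $\Phi(t,x,y)$ differ by an element of $\ker(D+D^T)$ (proof of Lemma \ref{Lipschitz}a), and this kernel is annihilated by $B-C^T$ via Assumption 4 (after the standard change of variable $z=P^{1/2}x$ one may take $P=I$), the first piece equals the single-valued map $(B-C^T)\Phi^0$, continuous with linear growth by Lemma \ref{Lipschitz}, hence absorbable into $f$ under Assumption 2. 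The remaining $A_{t,y}=C^T(F^{-1}_{t,y}+D)^{-1}C$ is maximal monotone in the ordinary Euclidean product, and the $\mathrm{dis}$-estimate reduces cleanly, via Lemma \ref{fullr} and the positive semidefiniteness of $D$, to $\mathrm{dis}(A_{t_1,y_1},A_{t_2,y_2})\le(\|C\|/c_2)\,\mathrm{dis}(F_{t_1,y_1},F_{t_2,y_2})$, giving $L_2'=\|C\|L_{F2}/c_2<1$. The idea you are missing, then, is to push the non-monotone part $(B-C^T)\Phi^0$ into the perturbation rather than to force $B\Phi$ itself to be monotone.
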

\begin{proof}
The inclusion in (\ref{Lure}) can be rewritten as 
$$
\dot{x}(t)\in g(t,x(t))-(B-C^T)\Phi(t,x(t),x(t))-C^T\Phi(t,x(t),x(t)=f(t,x(t))-A_{t,x(t)}(x(t))
$$
where
\beq\label{fnew}
f(t,x):=g(t,x(t))-(B-C^T)\Phi(t,x(t),x(t))
\eeq
and
\beq\label{Anew}
A_{t,y}(x):=C^T\Phi(t,x,y)=C^T(F^{-1}_{t,y}+D)^{-1}Cx.
\eeq
Then $A$ and $f$ satisfy Assumptions 1.2 and 2 respectively (see Lemma \ref{Lipschitz}). It remains to check that $A$ satisfies Assumption 1.1. Indeed, we have 
\baqn
&&{\rm dis}(A_{t_1,y_1}, A_{t_2,y_2})\\
&=&\sup\Big\{ \frac{\langle C^Tx_1^*-C^Tx_2^*, x_2-x_1 \rangle}{1+ \Vert C^Tx_1^*\Vert+ \Vert C^Tx_2^*\Vert}: x_i^*\in (F^{-1}_{t_i,y_i}+D)^{-1}Cx_i\Big\} \\
&=& \sup\Big\{ \frac{\langle x_1^*-x_2^*, Cx_2-Cx_1 \rangle}{1+ \Vert C^Tx_1^*\Vert+ \Vert C^Tx_2^*\Vert}: x_i^*\in F_{t_i,y_i}(Cx_i-Dx_i^*) \Big\} \\
&\le& \sup\Big\{ \frac{\langle x_1^*-x_2^*, (Cx_2-Dx^*_2)-(Cx_1-Dx^*_1) \rangle}{1+\Vert x_1^*\Vert+\Vert x_2^*\Vert} \frac{1+\Vert x_1^*\Vert+\Vert x_2^*\Vert}{1+ \Vert C^Tx_1^*\Vert+ \Vert C^Tx_2^*\Vert}: x_i^*\in F_{t_i,y_i}(Cx_i-Dx_i^*) \Big\} \\
&&({\rm since\;} D\; {\rm is\;positive\;semidefinite})\\
&\le& \frac{\Vert C \Vert}{c_2} \sup\Big\{ \frac{\langle x_1^*-x_2^*, (Cx_2-Dx^*_2)-(Cx_1-Dx^*_1) \rangle}{1+\Vert x_1^*\Vert+\Vert x_2^*\Vert}: x_i^*\in F_{t_i,y_i}(Cx_i-Dx_i^*) \Big\} \\
&&({\rm using\; Lemma\;  \ref{fullr}})\\
&\le& \frac{\Vert C \Vert}{c_2} {\rm dis}(F_{t_1,y_1}, F_{t_2,y_2})\;({\rm using\; the\; definition \; of}\; {\rm dis}(F_{t_1,y_1}, F_{t_2,y_2}))\\
&\le&  \frac{\Vert C \Vert}{c_2} (L_{1} \vert t_1-t_2 \vert+L_{2}\Vert y_1-y_2 \Vert)\\
&\le& (L'_{1} \vert t_1-t_2 \vert+L'_{2}\Vert y_1-y_2 \Vert),
\eaqn
where $L'_{1} := \frac{\Vert C \Vert L_{1}}{c_2}$ and $L'_{2}:=\frac{\Vert C \Vert L_{2}}{c_2}<1$. Thus, the proof is completed.  
\end{proof}
\begin{remark}
(i) If $F_{t,x}\equiv N_{C(t,x)}$, the full-rankness of $CC^T$ can be relaxed by the condition ${\rm rge}(D)\subset {\rm rge}(C)$ by using nice property of the normal cone \cite{Leand}. \\
(ii) The state-dependent sweeping process is also a special case of $(\ref{Lure0})$ when $F_{t,x}\equiv N_{C(t,x)}$ and $B=C=I, D=0$.
\end{remark}
\begin{thm} (Uniqueness)
Let all assumptions of Theorem \ref{exlure} hold. In addition, suppose that  $F_{t,x}(\cdot)=G_t(\cdot+g_1(t,x) )$ where  $G_t: \R^n \rightrightarrows \R^n$ is a maximal monotone operator for each $t\in  [0,T]$ and $ g_1:   [0,T] \times \R^n \to {\rm rge}(D+D^T)$ and $g$ are  Lipschitz continuous in bounded sets w.r.t the second variable. Then for each $(t_0,x_0)\in \mathcal{A}_3$,  problem (\ref{Lure0}) has a unique solution. 
\end{thm}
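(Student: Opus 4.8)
The plan is to reduce the statement to the abstract uniqueness result of Theorem~\ref{uniq}, applied to the reformulation of (\ref{Lure0}) obtained in the proof of Theorem~\ref{exlure}: there the inclusion in (\ref{Lure}) was rewritten as $\dot x(t)\in f(t,x(t))-A_{t,x(t)}(x(t))$ with $A$ given by (\ref{Anew}) and $f$ given by (\ref{fnew}), and it was checked that $A$ satisfies Assumption~1 and $f$ satisfies Assumption~2. Hence, by Theorem~\ref{uniq}, it suffices to verify the two additional requirements: $(a)$ the reduced $f$ is Lipschitz on bounded sets with respect to the second variable, and $(b)$ the reduced operator $A_{t,y}(x)=C^{T}(F^{-1}_{t,y}+D)^{-1}Cx$ is hypo-monotone on bounded sets in the sense of (\ref{hypo}). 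The computations are close in spirit to Remark~\ref{rm1}(iii), the new ingredients being the factorization $F_{t,x}(\cdot)=G_{t}(\cdot+g_{1}(t,x))$, Lemma~\ref{Lipschitz} (through which the minimal-norm selection $\Phi^{0}$ may be used in (\ref{fnew}) and takes values in ${\rm rge}(D+D^{T})$), Lemma~\ref{estiD}, the inclusion ${\rm rge}(g_{1})\subset{\rm rge}(D+D^{T})$, and the fact that $g,g_{1}$ are Lipschitz on bounded sets.

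For $(a)$: since $g$ is Lipschitz on bounded sets in the second variable, it is enough to prove the same for $x\mapsto\Phi^{0}(t,x,x)$. Put $\lambda_{i}:=\Phi^{0}(t,x_{i},x_{i})\in{\rm rge}(D+D^{T})$, so that $\lambda_{i}\in F_{t,x_{i}}(Cx_{i}-D\lambda_{i})=G_{t}(Cx_{i}-D\lambda_{i}+g_{1}(t,x_{i}))$. Testing the monotonicity of $G_{t}$ against the two pairs, using $\langle D(\lambda_{1}-\lambda_{2}),\lambda_{1}-\lambda_{2}\rangle\ge c_{1}\Vert\lambda_{1}-\lambda_{2}\Vert^{2}$ from Lemma~\ref{estiD} and $\langle\lambda_{1}-\lambda_{2},g_{1}(t,x_{1})-g_{1}(t,x_{2})\rangle\le L_{g_{1}}\Vert\lambda_{1}-\lambda_{2}\Vert\,\Vert x_{1}-x_{2}\Vert$ on the bounded set at hand, one obtains $c_{1}\Vert\lambda_{1}-\lambda_{2}\Vert^{2}\le(\Vert C\Vert+L_{g_{1}})\Vert\lambda_{1}-\lambda_{2}\Vert\,\Vert x_{1}-x_{2}\Vert$, hence $\Vert\lambda_{1}-\lambda_{2}\Vert\le\frac{\Vert C\Vert+L_{g_{1}}}{c_{1}}\Vert x_{1}-x_{2}\Vert$, which is the required Lipschitz bound; consequently $f$ is Lipschitz on bounded sets in the second variable.

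For $(b)$: fix $M>0$, $t\in[0,T]$, $x_{i}\in M\ball$, and $x_{i}^{*}\in A_{t,x_{i}}(x_{i})$, say $x_{i}^{*}=C^{T}z_{i}$ with $z_{i}\in(F^{-1}_{t,x_{i}}+D)^{-1}Cx_{i}$, equivalently $z_{i}\in G_{t}(Cx_{i}-Dz_{i}+g_{1}(t,x_{i}))$. Then $\langle x_{1}^{*}-x_{2}^{*},x_{1}-x_{2}\rangle=\langle z_{1}-z_{2},C(x_{1}-x_{2})\rangle$, and monotonicity of $G_{t}$ gives $\langle z_{1}-z_{2},C(x_{1}-x_{2})\rangle\ge\langle D(z_{1}-z_{2}),z_{1}-z_{2}\rangle-\langle z_{1}-z_{2},g_{1}(t,x_{1})-g_{1}(t,x_{2})\rangle$. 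Decomposing each $z_{i}$ orthogonally into its components in ${\rm rge}(D+D^{T})$ and ${\rm ker}(D+D^{T})$, one checks that both $\langle D(z_{1}-z_{2}),z_{1}-z_{2}\rangle$ and the pairing with $g_{1}(t,x_{i})\in{\rm rge}(D+D^{T})$ depend only on the first components; Lemma~\ref{estiD} and the elementary inequality $a^{2}+b^{2}\ge2ab$ then yield $\langle x_{1}^{*}-x_{2}^{*},x_{1}-x_{2}\rangle\ge-\frac{L_{g_{1}}^{2}}{4c_{1}}\Vert x_{1}-x_{2}\Vert^{2}$, so $A$ is hypo-monotone on $M\ball$ with $k_{M}=L_{g_{1}}^{2}/(4c_{1})$. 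With $(a)$ and $(b)$ established, Theorem~\ref{uniq} applies to the reformulated inclusion and gives the uniqueness of solutions of (\ref{Lure}), hence of (\ref{Lure0}).

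The main obstacle is step $(b)$: while $\Phi^{0}(t,x_{i},x_{i})$ lies in ${\rm rge}(D+D^{T})$, the multiplier $z_{i}$ associated with a given solution need only lie in the fibre $\Phi^{0}(t,x_{i},x_{i})+{\rm ker}(D+D^{T})$, so the coercivity of $D$ cannot be invoked on $z_{i}$ directly and one must project onto ${\rm rge}(D+D^{T})$ — it is here that ${\rm rge}(g_{1})\subset{\rm rge}(D+D^{T})$ is indispensable. One should also verify that these kernel components are harmless when passing between (\ref{Lure0}) and the reduced inclusion, which is where the compatibility condition ${\rm ker}(D+D^{T})\subset{\rm ker}(PB-C^{T})$ of Assumption~4 enters; beyond this, the argument reduces to the Gronwall estimate already carried out in the proof of Theorem~\ref{uniq}.
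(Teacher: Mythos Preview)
Your proposal is correct and follows essentially the same route as the paper: reduce to Theorem~\ref{uniq} via the reformulation (\ref{fnew})--(\ref{Anew}) from Theorem~\ref{exlure}, and verify hypo-monotonicity of $A$ by writing $x_i^*=C^T z_i^*$ with $z_i^*\in G_t(Cx_i-Dz_i^*+g_1(t,x_i))$, applying the monotonicity of $G_t$, projecting onto ${\rm rge}(D+D^T)$, and combining Lemma~\ref{estiD} with the Lipschitz bound on $g_1$ and Young's inequality to get the constant $L_{g_1}^2/(4c_1)$. This is exactly the paper's computation for step~(b).

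The one substantive difference is that you explicitly carry out step~(a), the Lipschitz continuity of $x\mapsto\Phi^0(t,x,x)$ on bounded sets, whereas the paper's proof simply asserts ``it is sufficient to show that $A$ \ldots\ is hypo-monotone'' and omits this verification. Your argument for (a) is sound (and in fact stronger than what Lemma~\ref{Lipschitz}(b) alone yields, since the latter only gives a mixed Lipschitz/H\"older estimate when the $y$-slot also varies); it genuinely exploits the additional structure $F_{t,x}(\cdot)=G_t(\cdot+g_1(t,x))$ together with $\Phi^0(t,x,x)\in{\rm rge}(D+D^T)$. So your write-up is, if anything, more complete than the paper's on this point. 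Your closing remark about the role of Assumption~4 in making $(B-C^T)\Phi$ well defined modulo ${\rm ker}(D+D^T)$ is also apt, though the paper handles this tacitly by working with $\Phi^0$ via Lemma~\ref{Lipschitz}.
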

\begin{proof}
Using Theorem \ref{uniq}, it is sufficient to show that $A$ defined in (\ref{Anew}) is hypo-monotone in bounded sets. Indeed, given $M>0$, for all $t\in [0,T], x_i\in \R^n \cap M\ball, x^*_i\in A_{t,x_i}(x_i)$, $i=1,2$ we have $y_i=C^Tz^*_i$ where 
$$
z^*_i\in (F^{-1}_{t,x_i}+D)^{-1}Cx_i \Leftrightarrow z^*_i\in F_{t,x_i}(Cx_i-Dz^*_i)=G_t(Cx_i-Dz^*_i+g_1(t,x)).
$$
Thank to the monotonicity of $G_t$, one has
$$
\langle z^*_1-z^*_2, (Cx_1-Dz^*_1+g_1(t,x_1)-(Cx_2-Dz^*_2+g_1(t,x_2)\rangle \ge 0.
$$
Therefore,
\baqn
\langle x^*_1- x^*_2, x_1-x_2 \rangle&=&\langle C^Tz^*_1- C^Tz^*_2, x_1-x_2 \rangle\\
&\ge&\langle D(z^*_1- z^*_2), z^*_1- z^*_2 \rangle-\langle z^*_1-z^*_2,g_1(t,x_1)-g_1(t,x_2)\rangle\\
&\ge& c_1 \Vert z^{*im}_1- z^{*im}_2 \Vert^2 - l_M \Vert z^{*im}_1- z^{*im}_2 \Vert \Vert x_1- x_2 \Vert\\
&\ge& - \frac{l_M^2}{4c_1}\Vert x_1- x_2 \Vert^2,
\eaqn
where $c_1>0$ is defined in Lemma \ref{estiD}, $l_M$ is the Lipschitz constant of $g_1$ on the ball $M\ball$ and $z^{*im}$ denotes the projection of $z^*$ onto ${\rm rge}(D+D^T)$. The proof is therefore completed. 
\end{proof}

We also have a characterization for nonsmooth Lyapunov pairs associated with problem (\ref{Lure0}), which is a consequence of Theorem \ref{mainth3}.
\begin{thm}\label{mainth5}
Let $V \in \Gamma_w([0,+\infty)\times \R^n)$, $W\in \Gamma_+([0,+\infty)\times \R^n)$, $a\ge 0$ and ${\rm dom}(V)\subset \mathcal{A}_3$. Then the following assertions are equivalent: \\
$(i)$ For each $(t_0,x_0)\in {\rm dom}(V)$, we have
$$e^{a(t-t_0)}V\big(t,x(t)\big)+\int_{t_0}^tW\big(\tau,x(\tau)\big)d\tau\le V(t,x)\;\;\forall \;t\ge 0,$$
where $x(\cdot):=x(\cdot;t_0,x_0)$.\\

\noindent $(ii)$ For each $(t,x)\in {\rm dom}(V)$ and $(\theta,\xi)\in \partial^PV(t,x)$ we have
\begin{equation}
\theta+\displaystyle\min_{v\in \big(g(t,x)-B(F^{-1}_{t,x}+D)^{-1}Cx \big)\cap\; M(x)  \ball}\big\langle \xi, v \rangle +aV(t,x)+W(t,x)\le 0.
\end{equation}
$(iii)$ For each $(t,x)\in {\rm dom}(V)$  we have
\begin{equation}\nonumber
\left\{
\begin{array}{l}
\displaystyle\sup_{(\theta, \xi)\in \partial^PV(t,x)}\Big\{\theta+\min_{v\in \big(g(t,x)-B(F^{-1}_{t,x}+D)^{-1}Cx\big)\;\cap\; M(x)\ball}\langle \xi, v \rangle +aV(t,x)+W(t,x)\Big\}\le 0,\\ \\
\displaystyle\sup_{(\theta, \xi)\in \partial^\infty V(t,x)} \Big\{\theta+\min_{v\in \big(g(t,x)-B(F^{-1}_{t,x}+D)^{-1}Cx\big)\cap \; M(x)\ball}\langle \xi, v\rangle\Big\}\le 0.
\end{array}\right.
\end{equation}
$(iv)$ For each $(t,x)\in {\rm dom}(V)$, for any $M>0$ large enough, we have 
\begin{equation}\nonumber
\left\{
\begin{array}{l}
\displaystyle\sup_{(\theta, \xi)\in \partial^PV(t,x)}\Big\{\theta+\min_{v\in \big(g(t,x)-B(F^{-1}_{t,x}+D)^{-1}Cx\big)\;\cap\; M\ball}\langle \xi, v \rangle +aV(t,x)+W(t,x)\Big\}\le 0,\\ \\
\displaystyle\sup_{(\theta, \xi)\in \partial^\infty V(t,x)} \Big\{\theta+\min_{v\in \big(g(t,x)-B(F^{-1}_{t,x}+D)^{-1}Cx\big)\cap \; M \ball}\langle \xi, v\rangle\Big\}\le 0.
\end{array}\right.
\end{equation}
where $M(x)$ is defined in (\ref{Mx0}).
\end{thm}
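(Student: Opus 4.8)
The plan is to obtain Theorem~\ref{mainth5} as a direct corollary of Theorem~\ref{mainth3} by recognizing the state-dependent Lur'e system (\ref{Lure0}) as a particular instance of (\ref{main}). As in the reduction (\ref{Lurere})--(\ref{Lure}) and in the proof of Theorem~\ref{exlure}, (\ref{Lure0}) is equivalent to
$$
\dot x(t)\in g(t,x(t))-B\bigl(F^{-1}_{t,x(t)}+D\bigr)^{-1}Cx(t),\qquad x(t_0)=x_0,
$$
and this right-hand side can be written as $f(t,x)-A_{t,x}(x)$ for the pair $(f,A)$ introduced there, namely $A_{t,y}(x)=C^T(F^{-1}_{t,y}+D)^{-1}Cx$ together with the continuous perturbation $f$ built from $g$ and the minimal-norm selection $\Phi^0$ of Lemma~\ref{Lipschitz}. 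The essential observation is that, as a set-valued map, $f(t,x)-A_{t,x}(x)=g(t,x)-B(F^{-1}_{t,x}+D)^{-1}Cx$, which is exactly the multifunction over which the minimum is taken in statements $(ii)$--$(iv)$ of Theorem~\ref{mainth5}.

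First I would check that the reduced data $(f,A)$ satisfy every hypothesis needed to invoke Theorem~\ref{mainth3}: Assumptions~1 and 2, the hypo-monotonicity of $A$ on bounded sets, and the local Lipschitz continuity of $f$. These have already been verified earlier in this section: Assumption~1.1 for $A$ comes from the ${\rm dis}$-estimate in the proof of Theorem~\ref{exlure} (via Lemma~\ref{fullr} and Assumption~3), Assumptions~1.2 and 2 from Lemma~\ref{Lipschitz}, and the hypo-monotonicity of $A$ with the local Lipschitz continuity of $f$ from the monotonicity of $G_t$, Lemma~\ref{estiD}, and the local Lipschitz continuity of $g$ and $g_1$, exactly as in the uniqueness result for (\ref{Lure0}) proved above. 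Consequently, for every $(t_0,x_0)\in\mathcal{A}_3$ the problem (\ref{Lure0}) has a unique solution $x(\cdot;t_0,x_0)$, so the notation in $(i)$ is meaningful.

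Next I would line up the remaining ingredients. Since $C^T$ sends nonempty sets to nonempty sets, $x_0\in{\rm dom}(A_{t_0,x_0})$ holds iff $(F^{-1}_{t_0,x_0}+D)^{-1}Cx_0\neq\emptyset$, so the admissible set $\mathcal{A}_1$ attached to the reduced inclusion is precisely $\mathcal{A}_3$; hence the hypothesis ${\rm dom}(V)\subset\mathcal{A}_3$ is the hypothesis ${\rm dom}(V)\subset\mathcal{A}_1$ required by Theorem~\ref{mainth3}. The constant $M(x)$ of (\ref{Mx0}) --- formed from the growth constants $c_f,c_A,L_1,L_2$ of the reduced pair $(f,A)$ --- is the very bound that enters Lemma~\ref{zt} and Theorem~\ref{mainth3}, and the subdifferentials $\partial^PV$, $\partial^\infty V$ are intrinsic to $V$ and do not depend on the representation of the dynamics. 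With these identifications, statements $(i)$--$(iv)$ of Theorem~\ref{mainth3}, written for the reduced inclusion, become verbatim the statements $(i)$--$(iv)$ of Theorem~\ref{mainth5}, and invoking Theorem~\ref{mainth3} yields the claimed chain of equivalences.

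I expect the only genuinely delicate step to be the bookkeeping around the reduction itself: one must make sure that the passage between (\ref{Lure0}) and the reduced form of (\ref{main}) is an equivalence at the level of trajectories --- every solution of one yielding a solution of the other with the same $x$-component, which is where the passivity-type condition of Assumption~4 is used --- and that all of the standing assumptions of this section really do imply every hypothesis of Theorem~\ref{uniq} for the reduced pair $(f,A)$, in particular that $f$ is single-valued and continuous, which rests on the image-component $\Phi^0(t,x,y)$ of $\Phi(t,x,y)$ being uniquely determined and continuous in $(t,x,y)$ (Lemma~\ref{Lipschitz}). Once this is settled, no further analysis is needed and Theorem~\ref{mainth5} follows at once.
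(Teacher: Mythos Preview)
Your proposal is correct and follows essentially the same route as the paper: the paper's proof simply notes that for each $(t,x)$ the set $f(t,x)-A_{t,x}(x)$ equals $g(t,x)-B(F^{-1}_{t,x}+D)^{-1}Cx$ and then invokes Theorem~\ref{mainth3}. Your write-up is more explicit about checking the standing hypotheses of Theorem~\ref{mainth3} for the reduced pair $(f,A)$ and about the identification $\mathcal{A}_1=\mathcal{A}_3$, but the underlying argument is the same.
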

\begin{proof}
Let us note that for each $(t,x)\in [0,+\infty)\times \R^n$, we have
$$
f(t,x)-A_{t,x}(x)=g(t,x)-B(F^{-1}_{t,x}+D)^{-1}Cx.
$$
Therefore, the conclusion follows by using Theorem  \ref{mainth3}.
\end{proof}

\section{Conclusion} \label{section5}
In the paper, we introduce and study the well-posedness as well as nonsmooth Lyapunov pairs for a class of state-dependent maximal monotone differential inclusions with some illustrative examples. The obtained results can be used to deal with the  Lyapunov stability for state-dependent sweeping processes and Lur'e dynamical systems for the first time. Well-posedness of state-dependent Lur'e systems involving general maximal monotone operators is also considered here.

\

\begin{thebibliography}{99}
\bibitem{ahl}  { \sc S. Adly, A. Hantoute, B. K. Le}, \emph{Nonsmooth Lur'e Dynamical Systems in Hilbert
Spaces}, Set-Valued Var. Anal, vol 24, iss. 1, 13--35, 2016.

\bibitem{ahat}
{\sc  S. Adly, A. Hantoute, M. Thera,}
  Nonsmooth Lyapunov pairs for infinite-dimensional first-order differential inclusions,
 \emph{Nonlinear Anal.}  \textbf{75} (2012), 985--1008.

\bibitem{aht} 
{\sc S. Adly, T. Haddad, L. Thibault, }
 Convex Sweeping Process in the framework of Measure Differential Inclusions and Evolution Variational Inequalities, 
 \emph {Math. Program. Ser. B} \textbf{148} (2014), 5--47.

\bibitem{al} 
{\sc S. Adly, B. K. Le},  \emph{ Non-convex sweeping processes involving
maximal monotone operators}, Optimizations, vol. 66 (9), 1465--1486, 2017.

 \bibitem{Aubin} {\sc J. P. Aubin, A. Cellina}, {\em Differential Inclusions. Set-Valued Maps and Viability Theory}, Spinger-Verlag, Berlin, 1984.

\bibitem{Brezis}
 {\sc H. Brezis}, {Op\'{e}rateurs Maximaux Monotones et Semi-groupes de Contractions dans
les Espaces de Hilbert}, 
 {Math. Studies 5, North-Holland American Elsevier} (1973).

\bibitem{bg} { \sc B. Brogliato} {\em Absolute stability and the Lagrange-Dirichlet theorem with monotone multivalued mappings}, Systems and Control Letters 2004, 51 (5), 343--353.

\bibitem{bg1}{\sc B. Brogliato, D. Goeleven},  {\em Well-posedness, stability and invariance results for a class of multivalued Lur'e dynamical systems}, Nonlinear Analysis: Theory, Methods and Applications, vol. 74, pp. 195--212, 2011.

\bibitem{bg2}{\sc B. Brogliato, D. Goeleven}, {\em Existence, uniqueness of solutions and stability of nonsmooth multivalued Lur'e dynamical systems}, Journal of Convex Analysis, vol. 20, no. 3, pp. 881--900, 2013.

\bibitem{cs}{\sc M. K. Camlibel, J. M. Schumacher}, {\em Linear passive systems and maximal monotone mappings}, Math. Program. 157 (2), pp 397--420, 2016.

\bibitem{Clarke}
{\sc F. H. Clarke, Y. S. Ledyaev, R. J. Stern, P. R. Wolenski }, 
 { Nonsmooth
Analysis and Control Theory}, 
 vol. 178 of Graduate Texts in Mathematics.
Springer, New York, 1998.

\bibitem{hv} {\sc A. Hantoute, E. Vilches} {\em Lyapunov pairs for perturbed sweeping processes}, to appear in Optimization Letters.
\bibitem {Kenmochi} {\sc  N. Kenmochi},  \emph{ Solvability of nonlinear evolution equations with time-dependent constraints
and applications}, Bull. Fac. Educ. Chibo Univ. 30, 1--81 (1981).

\bibitem{KRASNOSEL?SKIMI} {\sc A. Krasnosel'skimi, V. Pokrovskiia}, \emph{ Systems with Hysteresis}. Springer, Berlin (1988).

\bibitem {Kunze0} {\sc M. Kunze, M.D.P. Monteiro Marques},  \emph{BV solutions to evolution problems with time-dependent
domains}, Set-Valued Anal., 5 (1997), pp. 57--72.

\bibitem {Kunze} 
{\sc M. Kunze, M.D.P. Monteiro Marques}, 
 {An introduction to Moreau's sweeping process }, 
  \emph{in ``Impacts in Mechanical Systems. Analysis
and Modelling"}, (B. Brogliato, Ed), 1-60, Springer, Berlin, 2000.

\bibitem{Azzam} {\sc D. Azzam-Laouir, C. Castaing,  M.D.P Monteiro Marques},  \emph{Perturbed Evolution Problems with Continuous Bounded Variation in Time and Applications}, Set-Valued Var. Anal (2018) 26, 693--728.

\bibitem {Le}
 {\sc B. K. Le}, 
 { On Properties of Differential Inclusions with Prox-regular Sets}, 
 \emph{Pac. J. Optim.} {13} (2017), 17-27. 

%
%
%


\bibitem{Leand} {\sc B. K. Le },  {\em Lur'e dynamical systems with state-dependent set-valued feedback}, submitted. 

\bibitem{Liberzon}{\sc M. R. Liberzon}, {\em Essays on the absolute stability theory}, Automation and Remote
Control, vol. 67, no 10, pp. 1610--1644, October 2006.

\bibitem{BM} 
 {\sc B. Mordukhovich}, 
  {Variational Analysis and Generalized Differentiation I,} Springer, 2006.
  
\bibitem{M1} 
{\sc J. J. Moreau}, 
 {Proximit\' e et dualit\'e dans un espace hilbertien}, 
 \emph{Bull. Soc. Math. France}, {93} (1965), 273--299.

\bibitem{M2} 
{\sc J. J. Moreau}, 
 { Sur l'evolution d'un syst\`eme \'elastoplastique}, 
 \emph{C. R. Acad. Sci. Paris S\'er. A-B}, {273} (1971), A118--A121.

\bibitem{M3} 
{\sc J. J. Moreau}, 
 { Rafle par un convexe variable I}, 
 \emph{S\'em. Anal. Convexe Montpellier} (1971), Expos\'e 15.

\bibitem{M4} 
 {\sc J. J. Moreau}, 
 { Rafle par un convexe variable II}, 
 \emph{S\'em. Anal. Convexe Montpellier} (1972), Expos\'e 3.

\bibitem{Pavel} {\sc  N. H. Pavel}, Nonlinear evolution operators and semigroups, Lecture Notes in Mathematics, 1260. Springer,
New York (1987).

%
%


\bibitem{abc0} {\sc A. Tanwani, B. Brogliato, C. Prieur}  {\em Stability and observer design for Lur'e systems with
multivalued , non-monotone, time-varying
nonlinearities and state jumps}, SIAM J. Control Opti., Vol. 52, No. 6, pp. 3639--3672, 2014.
 
\bibitem{abc} {\sc A. Tanwani, B. Brogliato, C. Prieur}  {\em Well-Posedness and Output Regulation for Implicit Time-Varying Evolution Variational Inequalities}, SIAM J. Control Opti., Vol. 56, No. 2, pp. 751--781, 2018.

\bibitem{TANAKA} {\sc H. Tanaka}, {\em Stochastic differential equations with reflecting boundary conditions in convex regions}, Hiroshima
math. J. 9, 163-177 (1979).

\bibitem{t1}
 {\sc L. Thibault}, 
  {Sweeping process with regular and nonregular sets}, 
 \emph{J. Differ. Equ.} \textbf{193} (2003), 1--23.

\bibitem{t2}  
 {\sc L. Thibault},
  {Regularization of nonconvex sweeping process in Hilbert space}, 
{ Set-valued Anal. } \textbf{16} (2008),
319--333.

\bibitem{Vladimirov} {\sc A. A. Vladimirov}, {\em Nonstationary dissipative evolution equations in a Hilbert space}, Nonlinear
Anal. 17 (1991) 499--518. 
\end{thebibliography}
\end{document}